\numberwithin{equation}{section} 
\newcommand{\R}{\ensuremath{\mathbb{R}}}
\newcommand{\N}{\ensuremath{\mathbb{N}}}
\newcommand{\K}{\mathbb{K}}
\newcommand{\cC}{\mathcal{C}}
\newcommand{\X}{\mathbb{X}}
\newcommand{\cD}{\mathcal{D}}
\newcommand{\cL}{\mathcal{L}}
\newcommand{\cM}{\mathcal{M}}
\newcommand{\bx}{\mathbf x}
\newcommand{\tp}{p,\sigma}
\newcommand{\tq}{q,\sigma}
\newcommand{\ltn}{\ensuremath{\left| \! \left| \! \left|}}
\newcommand{\rtn}{\ensuremath{\right| \! \right| \! \right|}}
\newtheorem{theorem}{Theorem}[section]
{ \theorembodyfont{\normalfont} 

}
\newtheorem{lemma}[theorem]{Lemma}
\newtheorem{corollary}[theorem]{Corollary}
\newtheorem{proposition}[theorem]{Proposition}
\newcounter{enumctr}
\begin{document}

\title{Stability theory for Gaussian rough differential equations. Part II.}

\author{Luu Hoang Duc\\Max-Planck-Institut f\"ur Mathematik in den Naturwissenschaften, $\&$\\Institute of Mathematics, Vietnam Academy of Science and Technology\\ {\it E-mail: duc.luu@mis.mpg.de, lhduc@math.ac.vn}
}
\date{}
\maketitle

\begin{abstract}
We propose a quantitative direct method of proving the stability result for Gaussian rough differential equations in the sense of Gubinelli \cite{gubinelli}. Under the strongly dissipative assumption of the drift coefficient function, we prove that the trivial solution of the system under small noise is exponentially stable. 
\end{abstract}

{\bf Keywords:}
stochastic differential equations (SDE), Young integral, rough path theory, rough differential equations, exponential stability.


\section{Introduction}

The paper continues our study in the first part \cite{duc19part1} to deal with the asymptotic stability criteria for rough differential equations of the form
\begin{equation}\label{RDE1}
dy_t = [A y_t + f(y_t)] dt + g(y_t)dx_t,
\end{equation}
or in the integral form
\begin{equation}\label{RDE2}
y_t = y_a + \int_a^t [A y_u + f(y_u) ]du + \int_a^t g(y_u) dx_u,\qquad t\in [a,T];
\end{equation}
where the nonlinear part $f: \R^d \to \R^d$ is globally Lipschitz function for simplicity and $g = (g_1,\ldots, g_m)$ is a collection of vector fields $g_j: \R^d \to \R^d$ such that $g_j \in C^{3}_b(\R^d,\R^d)$. Equation \eqref{RDE1} can be viewed as a controlled differential equation driven by rough path $x \in C^{\nu}([a,T],\R^m)$ for $\nu \in (\frac{1}{3},\frac{1}{2})$, in the sense of Lyons \cite{lyons98}, \cite{lyonsetal07} where $x$ can also be considered as an element of the space $C^{p-{\rm var}}([a,T],\R^m)$ of finite $p$ - variation norm, with $p\nu \geq 1$. For instance, given $\bar{\nu} \in (\frac{1}{3},1]$, the path $x$ might be a realization of a $\R^m$-valued centered Gaussian process satisfying: there exists for any $T>0$ a constant $C_T$ such that for all $p \geq \frac{1}{\bar{\nu}}$
\begin{equation}\label{Gaussianexpect}
E \|X_t- X_s\|^{p} \leq C_T |t-s|^{p\bar{\nu}},\quad \forall s,t \in [0,T]. 
\end{equation}
By Kolmogorov theorem, for any $\nu \in (0,\bar{\nu})$ and any interval $[0,T]$ almost all realization of $X$ will be in $C^\nu([0,T])$. Such a stochastic process, in particular, can be a fractional Brownian motion $B^H$  \cite{mandelbrot} with Hurst exponent $H \in (\frac{1}{3},\frac{1}{2})$, i.e. a family of $B^H = \{B^H_t\}_{t\in \R}$ with continuous sample paths and 
\[
E \|B^H_t- B^H_s\| = |t-s|^{2H}, \forall t,s \in \R.
\]
In this paper, we would like to approach system \eqref{RDE1}, where the second integral is well-understood as rough integral in the sense of Gubinelli \cite{gubinelli}. 
Such system satisfies the existence and uniqueness of solution given initial conditions, see e.g. \cite{gubinelli} or \cite{frizhairer} for a version without drift coefficient function, and \cite{riedelScheutzow} for a full version using $p$ - variation norms.\\
To study the local stability, we impose conditions for matrices $A\in \R^{d\times d}$ such that $A$ is negative definite, i.e. there exists a $\lambda >0$ such that
\begin{equation}\label{lambda}
\langle y, Ay \rangle \leq - \lambda_A \|y\|^2.
\end{equation}
We also assume that the nonlinear part $f: \R^d \to \R^d$ is locally Lipschitz function such that 
\begin{equation}\label{condf}
f(0) =0\quad \text{and} \quad \|f(y)\| \leq \|y\| h(\|y\|)
\end{equation}
where $h: \R^+ \to \R^+$ is an increasing function which is bounded above by a constant $C_f$. Our assumption is somehow still global, but it has an advantage of being able to treat the local dynamics as well. We refer to \cite{garrido-atienzaetal} and \cite{GABSch18} for real local versions on a small neighborhood $B(0,\rho)$ of the trivial solution, using the cutoff technique.\\  
In this paper, we also assume that $g(0) = 0$ and $g \in C^{3}_b$ in case $\nu\in (\frac{1}{3},\frac{1}{2})$ with bounded derivatives $C_g$ (which also include the Lipschit coefficient of the highest derivative). System \eqref{RDE1} then admits an equilibrium which is the trivial solution. Our main stability results are then formulated as follows.

\begin{theorem}[Stability for rough systems]\label{stablinRDE}
	Assume $X_\cdot(\omega)$ is a centered Gaussian process with stationary increments satisfying \eqref{Gaussianexpect}, and $\frac{1}{2}>\bar{\nu}>\nu >\frac{1}{3}$ is fixed. Assume further that conditions \eqref{lambda}, \eqref{condf} are satisfied, where $\lambda_A > h(0)$.Then there exists an $\epsilon >0$ such that given $C_g < \epsilon$, and for almost sure all realizations $x_\cdot=X_\cdot(\omega)$, the zero solution of \eqref{RDE1} is locally exponentially stable. If in addition $\lambda_A > C_f$, then we can choose $\epsilon$ so that the zero solution of \eqref{RDE1} is globally exponentially stable a.s. 
\end{theorem}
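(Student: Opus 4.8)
The plan is a quantitative \emph{direct} estimate: represent the solution by variation of constants, control the rough-integral contribution along the greedy sequence of stopping times of Part~I~\cite{duc19part1}, and use the exponential decay generated by $A$ to dominate, uniformly in time, the growth produced by $f$ and by the driver. Condition \eqref{lambda} first gives $\tfrac{\dd}{\dd t}\|e^{At}v\|^2=2\langle e^{At}v,Ae^{At}v\rangle\le-2\lambda_A\|e^{At}v\|^2$, hence $\|e^{At}\|\le e^{-\lambda_A t}$ for $t\ge0$; and since $(y,g(y))$ is a controlled rough path, for $a\le s\le t$
\[
y_t=e^{A(t-s)}y_s+\int_s^t e^{A(t-u)}f(y_u)\,\dd u+\int_s^t e^{A(t-u)}g(y_u)\,\dd x_u,
\]
the last term being a Gubinelli integral with Gubinelli derivative $u\mapsto e^{A(t-u)}Dg(y_u)g(y_u)$; because $g\in C^3_b$ and $g(0)=0$, every vector field occurring here has norm controlled by $C_g$ and vanishing at the origin. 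Since $f$ has at most linear growth and $g$ is bounded, the solution is global (see \cite{riedelScheutzow}), so the statement concerns the long-time behaviour of a genuinely global object.

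I would then fix a suitable $\mu>0$, let $\omega(s,t)$ be the control built in Part~I from the $p$-variation of $x$ and the $(p/2)$-variation of $\X$ (with $p\nu>1$), and set $\tau_0=a$, $\tau_{n+1}=\inf\{t>\tau_n:\omega(\tau_n,t)\ge\mu\}\wedge(\tau_n+1)$, $\Delta_n=\tau_{n+1}-\tau_n\in(0,1]$, $N(a,t)=\#\{n\ge1:\tau_n\le t\}$. Superadditivity of $\omega$ together with the Gaussian hypothesis \eqref{Gaussianexpect} and the almost-sure estimates of Part~I give $N(a,t)\le c_\mu(t-a)+M_\omega$ with $c_\mu$ deterministic and $M_\omega<\infty$ a.s. On a single interval $[\tau_n,\tau_{n+1}]$ I would combine the sewing/Davie bound for the rough integral with $\|f(y_u)\|\le h(\rho_n)\|y_u\|$, where $\rho_n:=\sup_{[\tau_n,\tau_{n+1}]}\|y_u\|$; the resulting self-referential inequality for $\rho_n$ closes once $C_g$ is small relative to $\mu$, yielding $\rho_n\le K(\mu,C_g)\|y_{\tau_n}\|$ and a one-step bound
\[
\|y_{\tau_{n+1}}\|\le\bigl(1+C_\ast C_g\,\psi(\mu)\bigr)\,e^{-(\lambda_A-h(\rho_n))\Delta_n}\,\|y_{\tau_n}\|,
\]
with $C_\ast$ universal and $\psi(\mu)$ a fixed $\mu$-dependent constant (it is $g(0)=0$ that keeps this multiplicative rather than additive).

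Iterating over $n=0,\dots,N-1$ with $N=N(a,t)$ and $\tau_N\le t<\tau_{N+1}$, and using $\prod_n e^{-(\lambda_A-h(\rho_n))\Delta_n}\le e^{-(\lambda_A-\sup_n h(\rho_n))(\tau_N-a)}$, $(1+C_\ast C_g\psi(\mu))^N\le e^{C_\ast C_g\psi(\mu)N}$ and $N\le c_\mu(t-a)+M_\omega$, I get
\[
\|y_{\tau_N}\|\le e^{C_\ast C_g\psi(\mu)M_\omega}\,\exp\!\Bigl(-\bigl(\lambda_A-\textstyle\sup_n h(\rho_n)-C_\ast C_g\psi(\mu)c_\mu\bigr)(\tau_N-a)\Bigr)\|y_a\|.
\]
For the global claim $h(\rho_n)\le C_f$, so after fixing $\mu$ one chooses $\epsilon>0$ so small that $C_g<\epsilon$ forces $\lambda^\ast:=\lambda_A-C_f-C_\ast C_g\psi(\mu)c_\mu>0$. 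For the local claim one bootstraps: with $T^\ast:=\sup\{t:\sup_{[a,t]}\|y\|\le\delta\}$ for small $\delta$, on $[a,T^\ast]$ all $\rho_n\le\delta$ so $\sup_n h(\rho_n)\le h(\delta)$, which is $<\lambda_A$ once $\delta$ is small since $h$ is increasing with $h(0)<\lambda_A$; the displayed bound then holds on $[a,T^\ast]$ with $\lambda^\ast=\lambda_A-h(\delta)-C_\ast C_g\psi(\mu)c_\mu>0$, and if $\|y_a\|$ is below a random a.s.\ positive threshold one gets $\sup_{[a,T^\ast]}\|y\|<\delta$, forcing $T^\ast=\infty$. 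Interpolating finally on $[\tau_N,\tau_{N+1}]$ via $\|y_t\|\le K(\mu,C_g)\|y_{\tau_N}\|$ and $t-\tau_N\le1$ gives $\|y_t\|\le M'_\omega\,e^{-\lambda^\ast(t-a)}\|y_a\|$ with $M'_\omega<\infty$ a.s., i.e.\ the claimed local (resp.\ global) exponential stability.

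The step I expect to be the main obstacle is the bookkeeping in the last two displays: one must ensure that the \emph{accumulated} rough contribution over $[a,t]$ grows only linearly in $t-a$ rather than with the greedy count $N(a,t)$ — which is exactly where the Gaussian structure is used, through $N(a,t)\le c_\mu(t-a)+M_\omega$ — and one must track the precise dependence of $K$, $\psi$, $c_\mu$ on $\mu$ so that, after choosing $C_g<\epsilon$, the rate $\lambda_A-h(0)$ (resp.\ $\lambda_A-C_f$) genuinely survives the corrections. A secondary difficulty is the local bootstrap, which requires the one-step estimate uniformly over a neighbourhood of $0$, not just along a single trajectory.
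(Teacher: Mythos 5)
Your route is genuinely different from the paper's. You work with the variation-of-constants representation $y_t=e^{A(t-s)}y_s+\int_s^t e^{A(t-u)}f(y_u)\,\dd u+\int_s^t e^{A(t-u)}g(y_u)\,\dd x_u$ together with the semigroup bound $\|e^{At}\|\le e^{-\lambda_A t}$, and iterate one-step contraction estimates across greedy intervals. The paper instead applies its rough change-of-variables formula (its Lemma~2.2) to $\log\|y_t\|$ (locally) and to $e^{2\lambda t}\|y_t\|^2$ (globally), and closes the estimate through a controlled-rough-path bound on the \emph{angular} variable $\theta_t=y_t/\|y_t\|$ (Proposition~\ref{angleome}); the global rate then emerges as an explicit Lyapunov exponent $-\lambda+\tfrac12\,E\log\bigl[1+2C_g\kappa\bigr]$ via a discrete Gronwall lemma and the Birkhoff ergodic theorem. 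Your approach is conceptually more elementary and avoids the angular equation entirely, at the cost of the more delicate bookkeeping you already flag: the self-closing inequality for $\rho_n$ requires $h(\rho_n)/\lambda_A+C_\ast C_g\psi(\mu)<1$ (which holds under $\lambda_A>h(\delta)$ locally, $\lambda_A>C_f$ globally, with $C_g$ small), and the one-step bound needs a genuine Gronwall argument applied to $z_t=e^{\lambda_A(t-\tau_n)}\|y_t\|$ rather than a naive triangle inequality — done that way your displayed form $(1+C_\ast C_g\psi(\mu))e^{-(\lambda_A-h(\rho_n))\Delta_n}$ is achievable because $\Delta_n\le1$ caps the absorbing constant at $e^{\lambda_A}$. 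Both routes rest on the same two probabilistic pillars: the exponential integrability of the greedy-time count on unit intervals (the paper's modified Cass--Litterer--Lyons estimate, its Theorem~\ref{integrability} and Corollary~\ref{Nintegrability}) and stationarity/ergodicity to convert this into the linear-in-time bound on $N(a,t)$; your assertion $N(a,t)\le c_\mu(t-a)+M_\omega$ is exactly what Birkhoff plus that integrability delivers, and should be cited as such rather than presented as a consequence of superadditivity alone. In short, your proposal is a viable alternative proof that trades the paper's explicit Lyapunov-exponent formula for a cleaner perturbation-of-semigroup structure.
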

Our method motivates from the direct method of Lyapunov, which aims to estimate the norm growth (or a Lyapunov-type function) of the solution in discrete intervals using the rough estimates for the angular equation which is feasible thanks to the change of variable formula for rough integral defined in the sense of Gubinelli. It is then sufficient to study the local and global exponential stablity of the corresponding random differential inequality, which can be done with random norm techniques in \cite{arnold}. A necessary assumption is the integrability of solution, which is straightforward for Young equations but  difficult for the rough case under the H\"older norm. Fortunately, we are able to build a modified version of {\it greedy times} in \cite{cassetal} for elements in the $C^{\tp}$ space, which is a little more regular than $C^{p {\rm -var}}$ by respecting also a small H\"older regularity $\sigma$. In addition, under the stronger assumption that the {\it rectangular increments} of the covariance defined by
\[
R \Big(\begin{array}{cc} s&t \\ s^\prime & t^\prime \end{array}\Big)  := E(X_{s,t} \otimes X_{s^\prime,t^\prime}) 
\]
is of finite $(\tq)$ - variation, we prove a similar result to \cite[Theorem 6.3]{cassetal} on the main tail estimate of the number of greedy time under the new $(\tp)$ - norm. The integrability of the solution under the new $(\tp)$ - variation seminorm is then proved in Theorem \ref{integrabilitysolution}. \\
We close the introduction part with a note that our method still works for the case $\nu \in (\frac{1}{4},\frac{1}{3}]$ with an extension of Gubinelli derivative to the second order, although the computation would be rather complicated. Moreover, it could also be applied for proving the general case in which $g$ is unbounded, even though we then need to prove the existence and uniqueness theorem first. The reader is referred to \cite{lejay} and \cite{coutinlejay} for this approach, in which the differential equation is understood in the sense of Davie \cite{davie}.

\section{Rough differential equations}

We would like to give a brief introduction to Young integrals. Given any compact time interval $I \subset \R$, let $C(I,\R^d)$ denote the space of all continuous paths $y:\;I \to \R^d$ equipped with sup norm $\|\cdot\|_{\infty,I}$ given by $\|y\|_{\infty,I}=\sup_{t\in I} \|y_t\|$, where $\|\cdot\|$ is the Euclidean norm in $\R^d$. We write $y_{s,t}:= y_t-y_s$. For $p\geq 1$, denote by $\cC^{p{\rm-var}}(I,\R^d)\subset C(I,\R^d)$ the space of all continuous path $y:I \to \R^d$ which is of finite $p$-variation 
\begin{eqnarray}
\ltn y\rtn_{p\text{-var},I} :=\left(\sup_{\Pi(I)}\sum_{i=1}^n \|y_{t_i,t_{i+1}}\|^p\right)^{1/p} < \infty,
\end{eqnarray}
where the supremum is taken over the whole class of finite partition of $I$. $\cC^{p{\rm-var}}(I,\R^d)$ equipped with the $p-$var norm
\begin{eqnarray*}
	\|y\|_{p\text{-var},I}&:=& \|y_{\min{I}}\|+\ltn y\rtn_{p\rm{-var},I},
\end{eqnarray*}
is a nonseparable Banach space \cite[Theorem 5.25, p.\ 92]{friz}. Also for each $0<\alpha<1$, we denote by $C^{\alpha}(I,\R^d)$ the space of H\"older continuous functions with exponent $\alpha$ on $I$ equipped with the norm
\[
\|y\|_{\alpha,I}: = \|y_{\min{I}}\| + \ltn y\rtn_{\alpha,I}=\|y(a)\| + \sup_{s<t\in I }\frac{\|y_{s,t}\|}{(t-s)^\alpha},
\]
A continuous map $\overline{\omega}: \Delta^2(I)\longrightarrow \R^+, \Delta^2(I):=\{(s,t): \min{I}\leq s\leq t\leq \max{I}\}$ is called a {\it control} if it is zero on the diagonal and superadditive, i.e.  $\overline{\omega}_{t,t}=0$ for all $t\in I$, and  $\overline{\omega}_{s,u}+\overline{\omega}_{u,t}\leq \overline{\omega}_{s,t}$ for all $s\leq u\leq t$ in $I$.\\
Now, consider $y\in \cC^{q{\rm-var}}(I,\cL(R^m,\R^d))$ and $x\in \cC^{p{\rm -var}}(I,\R^m)$ with  $\frac{1}{p}+\frac{1}{q}  > 1$, the Young integral $\int_I y_t d x_t$ can be defined as 
\[
\int_I y_s d x_s:= \lim \limits_{|\Pi| \to 0} \sum_{[u,v] \in \Pi} y_u x_{u,v} ,
\]
where the limit is taken on all the finite partition $\Pi=\{ \min{I} =t_0<t_1<\cdots < t_n=\max{I} \}$ of $I$ with $|\Pi| := \displaystyle\max_{[u,v]\in \Pi} |v-u|$ (see \cite[p.\ 264--265]{young}). This integral satisfies additive property by the construction, and the so-called Young-Loeve estimate \cite[Theorem 6.8, p.\ 116]{friz}
\begin{eqnarray}\label{YL0}
\Big\|\int_s^t y_u d x_u-y_s x_{s,t}\Big\| &\leq& K(p,q) \ltn y\rtn_{q\text{-var},[s,t]} \ltn x\rtn_{p\text{-var},[s,t]} \notag\\
&\leq& K(p,q) |t-s|^{\frac{1}{p}+\frac{1}{q}} \ltn y \rtn_{\frac{1}{p},[s,t]} \ltn x \rtn_{\frac{1}{q}{\rm -Hol},[s,t]}, 
\end{eqnarray}
for all $[s,t]\subset I$, where 
\begin{equation}\label{constK}
K(p,q):=(1-2^{1-\frac{1}{p} - \frac{1}{q}})^{-1}.
\end{equation}

We also introduce the construction of the integral using rough paths for the case $y,x \in C^\beta(I)$ when $\beta\in(\frac{1}{3},\nu)$. To do that, we need to introduce the concept of rough paths. Following \cite{frizhairer}, a couple $\bx=(x,\X)$, with $x \in C^\beta(I,\R^m)$ and $\X \in C^{2\beta}_2(\Delta^2(I),\R^m \otimes \R^m):= \{\X: \sup_{s<t} \frac{\|\X_{s,t}\|}{|t-s|^{2\beta}} < \infty \}$ where the tensor product $\R^m \otimes \R^n$ can be indentified with the matrix space $\R^{m\times n}$, is called a {\it rough path} if they satisfies Chen's relation
\begin{equation}\label{chen}
\X_{s,t} - \X_{s,u} - \X_{u,t} = x_{u,t} \otimes x_{s,u},\qquad \forall \min{I} \leq s \leq u \leq t \leq \max{I}. 
\end{equation}
$\X$ is viewed as {\it postulating} the value of the quantity $\int_s^t x_{s,r} \otimes dx_r := \X_{s,t}$ where the right hand side is taken as a definition for the left hand side. Denote by $\cC^\beta(I) \subset C^\beta \oplus C^{2\beta}_2$ the set of all rough paths in $I$, then $\cC^\beta$ is a closed set but not a linear space, equipped with the rough path semi-norm 
\begin{equation}\label{translated}
\ltn \bx \rtn_{\beta,I} := \ltn x \rtn_{\beta,I} + \ltn \X \rtn_{2\beta,\Delta^2(I)}^{\frac{1}{2}} < \infty.  
\end{equation}
Let $3>p> 2, \nu > \frac{1}{p}$. Throughout this paper, we will assume that  $x(\omega): I \to \R^m$ and $\X(\omega): I \times I \to \R^m \otimes \R^m$ are random funtions that satisfy Chen's relation relation \eqref{chen} and
\begin{equation}\label{expect}
\Big(E \|x_{s,t} \|^p\Big)^{\frac{1}{p}} \leq C |t-s|^\nu, \quad \text{and} \quad 	\Big(E \|\X_{s,t}\|^{\frac{p}{2}}\Big)^{\frac{2}{p}} \leq C |t-s|^{2\nu},\forall s,t \in I
\end{equation}
for some constant $C$. Then, due to the Kolmogorov criterion for rough paths \cite[Appendix A.3]{friz} for all $\beta\in (\frac{1}{3},\nu)$ there is a version of $\omega-$wise $(x,\X)$ and random variables $K_\beta \in L^p, \K_\beta \in L^{\frac{p}{2}}$, such that, $\omega-$wise speaking, for all $s,t \in I$,
\[
\|x_{s,t}\| \leq K_\alpha |t-s|^\beta, \quad \|\X_{s,t}\| \leq \K_\beta |t-s|^{2\beta},
\] 
so that $(x,\X) \in \cC^\beta.$ Moreover, we could choose $\beta$ such that 
\begin{eqnarray*}
&&x \in C^{0,\beta}(I):= \{x \in C^\beta: \lim \limits_{\delta \to 0}\sup_{0<t-s <\delta} \frac{\|x_{s,t}\|}{|t-s|^\beta} = 0\}, \\
&&\X \in C^{0,2\beta}(\Delta^2(I)):= \{ \X \in C^{2\beta}(\Delta^2(I)): \lim \limits_{\delta \to 0}\sup_{0<t-s <\delta} \frac{\|\X_{s,t}\|}{|t-s|^{2\beta}} = 0  \}, 
\end{eqnarray*}
then $\cC^{0,\beta}(I) \subset C^{0,\beta}(I) \oplus C^{0,2\beta}(\Delta^2(I))$ is separable due to the separability of $C^{0,\beta}(I)$ and $C^{0,2\beta}(\Delta^2(I))$. 

\subsection{Controlled rough paths}

A path $y \in C^\beta(I,\cL(\R^m,\R^d))$ is then called to be {\it controlled by} $x \in C^\beta(I,\R^m)$ if there exists a tube $(y^\prime,R^y)$ with $y^\prime \in C^\beta(I,\cL(\R^m,\cL(\R^m,\R^d))), R^y \in C^{2\beta}(\Delta^2(I),\cL(\R^m,\R^d))$ such that
\[
y_{s,t} = y^\prime_s x_{s,t} + R^y_{s,t},\qquad \forall \min{I}\leq s \leq t \leq \max{I}.
\]
$y^\prime$ is called Gubinelli derivative of $y$, which is uniquely defined as long as $x \in C^\beta\setminus C^{2\beta}$ (see \cite[Proposition 6.4]{frizhairer}). The space $\cD^{2\beta}_x(I)$ of all the couple $(y,y^\prime)$ that is controlled by $x$ will be a Banach space equipped with the norm
\begin{eqnarray*}
	\|y,y^\prime\|_{x,2\beta,I} &:=& \|y_{\min{I}}\| + \|y^\prime_{\min{I}}\| + \ltn y,y^\prime \rtn_{x,2\beta,I},\qquad \text{where} \\
	\ltn y,y^\prime \rtn_{x,2\beta,I} &:=& \ltn y^\prime \rtn_{\beta,I} +   \ltn R^y\rtn_{2\beta,I},
\end{eqnarray*}
where we omit the value space for simplicity of presentation. Now fix a rough path $(x,\X)$, then for any $(y,y^\prime) \in \cD^{2\beta}_x (I)$, it can be proved that the function $F \in C^\beta(\Delta^2 (I),\R^d)$ defined by 
\[
F_{s,t} := y_s x_{s,t} + y^\prime_s \X_{s,t}
\] 
belongs to the space 
\begin{eqnarray*}
	C^{\beta, 3\beta}_2(I) &:=& \Big \{ F\in C^\beta(\Delta^2(I)): F_{t,t} =0 \quad \text{and}\\ && \quad \qquad \qquad \qquad \qquad \ltn \delta F \rtn_{3\beta,I} := \sup_{\min{I} \leq s \leq u \leq t \leq \max{I}} \frac{\|F_{s,t} - F_{s,u}-F_{u,t}\|}{|t-s|^{3\beta}} < \infty \Big\}.
\end{eqnarray*}
Thanks to the sewing lemma \cite[Lemma 4.2]{frizhairer}, the integral $\int_s^t y_u dx_u$ can be defined as  
\[
\int_s^t y_u dx_u := \lim \limits_{|\Pi| \to 0} \sum_{[u,v] \in \Pi} [ y_{u}x_{u,v} + y^\prime_u \X_{u,v} ]
\]
where the limit is taken on all the finite partition $\Pi$ of $I$ with $|\Pi| := \displaystyle\max_{[u,v]\in \Pi} |v-u|$ (see \cite{gubinelli}). Moreover, there exists a constant $C_\beta = C_{\beta,|I|} >1$ with $|I| := \max{I} - \min{I}$, such that
\begin{equation}\label{roughEst}
\Big\|\int_s^t y_u dx_u - y_s x_{s,t} + y^\prime_s \X_{s,t}\Big\| \leq C_\beta |t-s|^{3\beta} \Big(\ltn x \rtn_{\beta,[s,t]} \ltn R^y \rtn_{2\beta,\Delta^2[s,t]} + \ltn y^\prime\rtn_{\beta,[s,t]} \ltn \X \rtn_{2\beta,\Delta^2[s,t]}\Big).
\end{equation}
From now on, if no other emphasis, we will simply write $\ltn x \rtn_{\beta}$ or $\ltn \X \rtn_{2\beta}$ without addressing the domain in $I$ or $\Delta^2(I)$. In particular, for any $f \in C^3_b(\R^d,\R^d)$ we get the formula for integration by composition
\[
f(x_t) = f(x_s) + \int_s^t \nabla f(x_u) dx_u + \frac{1}{2} \int_s^t \nabla^2f(x_u) d[x]_{s,u},
\]
where the last integral is understood in the Young sense and $[x]_{s,t}:= x_{s,t} \otimes x_{s,t}  - 2 \text{\ Sym\ } (\X_{s,t}) \in C^{2\beta}$. Notice that for geometric rough path $\X_{s,t} = \int_s^t x_{s,r} \otimes dx_r$, then $\text{\ Sym\ } (\X_{s,t}) = \frac{1}{2} x_{s,t} \otimes x_{s,t}$, thus $[x]_{s,t} \equiv 0.$\\
The following lemma is from \cite{duc19part1}.
\begin{lemma}[Change of variables formula]
	Assume that $\beta > \frac{1}{3}$, $V \in C^3_b(\R^d,\R)$ and $y \in C^{\beta}(I,\R) $ is a solution of the rough differential equation
	\begin{equation}\label{roughde1}
	y_t = y_s + \int_s^t f(y_u)du + \int_s^t g(y_u)dx_u,\quad \forall \min{I} \leq s \leq t \leq \max{I}.
	\end{equation}
	Then one get the change of variable formula
	\begin{eqnarray}\label{Roughformula}
	V(y_t) &=& V(y_s) + \int_s^t \langle D_y V(y_u), f(y_u)\rangle  du + \int_s^t \langle D_y V(y_u) g(y_u) \rangle d x_u \notag \\
	&& + \frac{1}{2} \int_s^t D_{yy}V(y_u) [g(y_u),g(y_u)] d[x]_{s,u},  
	\end{eqnarray}
	where
	\[
	[ D_y V(y) g(y)]^\prime_s = \langle D_y V(y_s), D_y g(y_s)g(y_s)\rangle + D_{yy}V(y_s)[g(y_s),g(y_s)].
	\]
\end{lemma}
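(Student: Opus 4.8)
The plan is to show that both sides of \eqref{Roughformula} are additive functions of the pair $(s,t)$ which agree up to a remainder of order $|t-s|^\theta$ with $\theta:=\min(3\beta,1+\beta)>1$; since an additive function dominated by $|t-s|^\theta$ for some $\theta>1$ must vanish identically, this proves the identity.

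First I would record the controlled structure. Since $y$ solves \eqref{roughde1} in the sense of Gubinelli, it is controlled by $x$ with Gubinelli derivative $y'=g(y)$, and its remainder satisfies
\[
R^y_{s,t}=y_{s,t}-g(y_s)x_{s,t}=\int_s^t f(y_u)\,du+\Big(\int_s^t g(y_u)\,dx_u-g(y_s)x_{s,t}\Big)=O(|t-s|^{2\beta}),
\]
using \eqref{roughEst} and $2\beta<1$, so that $(y,g(y))\in\cD^{2\beta}_x(I)$. By the stability of $\cD^{2\beta}_x$ under composition with $C^2_b$ maps (here $g,V\in C^3_b$), the paths $g(y)$ and $Z:=D_yV(y)g(y)$ are controlled by $x$, with Gubinelli derivatives $(g(y))'_s=D_yg(y_s)g(y_s)$ and $Z'_s=[D_yV(y)g(y)]'_s$ equal to the expression displayed in the lemma, while $\Psi:=D_{yy}V(y)[g(y),g(y)]\in C^\beta(I)$. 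Consequently the three integrals in \eqref{Roughformula} are well defined: the first as a Riemann integral, the second as the rough integral of $(Z,Z')$ via the sewing lemma, the third as a Young integral because $\Psi\in C^\beta$, $[x]\in C^{2\beta}$ and $\beta+2\beta>1$. A short computation from Chen's relation \eqref{chen} gives $[x]_{s,t}=[x]_{s,u}+[x]_{u,t}$, so $u\mapsto[x]_{s,u}$ is a genuine path whose increments do not depend on the base point; hence the Young integral against $d[x]$ is additive in its endpoints, and, together with additivity of the Riemann and rough integrals, the whole right-hand side $\Phi_{s,t}$ of \eqref{Roughformula} is additive in $(s,t)$.

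Next I would expand both sides around $s$. From the Young--Loeve estimate \eqref{YL0}, the rough estimate \eqref{roughEst}, and continuity of $u\mapsto\langle D_yV(y_u),f(y_u)\rangle$,
\[
\Phi_{s,t}=\langle D_yV(y_s),f(y_s)\rangle(t-s)+Z_sx_{s,t}+Z'_s\X_{s,t}+\tfrac12\Psi_s[x]_{s,t}+O(|t-s|^\theta).
\]
For the left-hand side, Taylor's theorem for $V\in C^3_b$ gives $V(y_t)-V(y_s)=\langle D_yV(y_s),y_{s,t}\rangle+\tfrac12 D_{yy}V(y_s)[y_{s,t},y_{s,t}]+O(\|y_{s,t}\|^3)$ with $\|y_{s,t}\|^3=O(|t-s|^{3\beta})$. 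Inserting $y_{s,t}=f(y_s)(t-s)+g(y_s)x_{s,t}+D_yg(y_s)g(y_s)\X_{s,t}+O(|t-s|^\theta)$ (which follows from the equation and \eqref{roughEst}) into the linear term reproduces the $(t-s)$ and $x_{s,t}$ terms and the contribution $\langle D_yV(y_s),D_yg(y_s)g(y_s)\rangle\X_{s,t}$; in the quadratic term only the leading block survives, namely $\tfrac12 D_{yy}V(y_s)[g(y_s)x_{s,t},g(y_s)x_{s,t}]=\tfrac12\Psi_s\,(x_{s,t}\otimes x_{s,t})$ modulo $O(|t-s|^\theta)$, and the identity $x_{s,t}\otimes x_{s,t}=[x]_{s,t}+2\,\mathrm{Sym}(\X_{s,t})$ combined with the symmetry of the bilinear form $\Psi_s$ (so that $\Psi_s\,\mathrm{Sym}(\X_{s,t})=\Psi_s\X_{s,t}$) turns it into $\tfrac12\Psi_s[x]_{s,t}+\Psi_s\X_{s,t}$. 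Collecting terms, the total coefficient of $\X_{s,t}$ becomes $\langle D_yV(y_s),D_yg(y_s)g(y_s)\rangle+\Psi_s=Z'_s$, so that
\[
V(y_t)-V(y_s)=\langle D_yV(y_s),f(y_s)\rangle(t-s)+Z_sx_{s,t}+Z'_s\X_{s,t}+\tfrac12\Psi_s[x]_{s,t}+O(|t-s|^\theta).
\]

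To conclude, set $E_{s,t}:=V(y_t)-V(y_s)-\Phi_{s,t}$. The two displays give $|E_{s,t}|\le C|t-s|^\theta$ with $\theta>1$ and $C$ uniform over subintervals of $I$, while $E$ is additive; for any partition $\Pi=\{s=t_0<\cdots<t_n=t\}$ one has $|E_{s,t}|\le\sum_i|E_{t_i,t_{i+1}}|\le C\,|\Pi|^{\theta-1}(t-s)\to 0$ as $|\Pi|\to 0$, hence $E\equiv 0$, which is \eqref{Roughformula}. I expect the main obstacle to be not conceptual but the careful index bookkeeping in the tensor identity for $\tfrac12\Psi_s(x_{s,t}\otimes x_{s,t})$ — where the symmetry of $D_{yy}V$ is exactly what makes the $\mathrm{Sym}(\X)$ piece recombine with the $\X$ term arising from the first-order Taylor term — together with the verification that every error term genuinely carries an exponent strictly larger than $1$.
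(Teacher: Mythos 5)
The paper does not actually prove this lemma here: the text preceding the statement reads ``The following lemma is from \cite{duc19part1}'', so there is no proof in this document to compare against. Judged on its own merits, however, your argument is correct and is the standard rough-paths route to an It\^o/chain-rule identity: exhibit both sides as additive functions of $(s,t)$, show they agree at every base point up to a remainder $O(|t-s|^\theta)$ with $\theta=\min(3\beta,1+\beta)>1$, and invoke the sewing-lemma uniqueness (telescoping) to force the difference to vanish. The two points that genuinely carry the proof are handled cleanly: (i) the verification from Chen's relation \eqref{chen} that $[x]_{s,t}=[x]_{s,u}+[x]_{u,t}$, so that $u\mapsto[x]_{s,u}$ has increments independent of the base point and the third integral is a bona fide Young integral, additive in its endpoints; and (ii) the tensor identity $x_{s,t}\otimes x_{s,t}=[x]_{s,t}+2\,\mathrm{Sym}(\X_{s,t})$ together with the symmetry of the Hessian $D_{yy}V$, which makes $\Psi_s\,\mathrm{Sym}(\X_{s,t})=\Psi_s\X_{s,t}$ and lets the contribution from the quadratic Taylor term recombine with the $\X_{s,t}$ term of the first-order Taylor expansion to produce exactly the Gubinelli derivative $Z'_s=\langle D_yV(y_s),D_yg(y_s)g(y_s)\rangle+D_{yy}V(y_s)[g(y_s),g(y_s)]$ displayed in the lemma. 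The expansion of $y_{s,t}$ used inside the Taylor formula (namely $y_{s,t}=f(y_s)(t-s)+g(y_s)x_{s,t}+D_yg(y_s)g(y_s)\X_{s,t}+O(|t-s|^\theta)$, coming from \eqref{roughEst} applied to the controlled path $(g(y),D_yg(y)g(y))$) and the uniformity of the constants over subintervals (from $V\in C^3_b$, $g\in C^3_b$, $y\in C^\beta$) are also correctly in place. In short: the proposal is a correct, complete proof, following the canonical sewing/additivity strategy; it cannot be matched line-by-line against the paper simply because the paper defers the proof to its companion Part~I.
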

In practice, we would use the $p$-var norm
\begin{eqnarray*}
	\|y,y^\prime\|_{x,p,I} &:=& \|y_{\min{I}}\| + \|y^\prime_{\min{I}}\| + \ltn y,y^\prime \rtn_{x,p,I},\qquad \text{where} \\
	\ltn y,y^\prime \rtn_{x,p,I} &:=& \ltn y^\prime \rtn_{p{\rm -var},I} +   \ltn R^y\rtn_{\frac{p}{2}{\rm -var},I}.
\end{eqnarray*}
Thanks to the sewing lemma \cite{coutin}, we can use a similar version to \eqref{roughEst} under $p-$var norm as follows.
\begin{equation}\label{roughpvar}
\Big\|\int_s^t y_u dx_u - y_s x_{s,t} + y^\prime_s \X_{s,t}\Big\| \leq C_\beta \Big(\ltn x \rtn_{p{\rm - var},[s,t]} \ltn R^y \rtn_{\frac{p}{2}{\rm -var},\Delta^2[s,t]} + \ltn y^\prime\rtn_{p{\rm - var},[s,t]} \ltn \X \rtn_{\frac{p}{2}{\rm -var},\Delta^2[s,t]}\Big).
\end{equation}

\subsection{Greedy times and integrability}
In this part, we would like to develop a modified version of greedy times as in \cite{cassetal}, for which we need a little more regularity. Given fixed $\nu \in (\frac{1}{3},\frac{1}{2}), \frac{1}{p} \in (\frac{1}{3},\nu), \sigma \in (0,\nu-\frac{1}{p})$ and $\beta = \frac{1}{p}+\sigma$, on each compact interval $I$ such that $|I|=\max{I} - \min{I} \leq 1$, consider a rough path $\bx = (x,\X) \in \cC^{\tp}(I)$ with the modified $(\tp)$ - norm $\ltn \bx \rtn_{p,\sigma} := \ltn x \rtn_{p,\sigma} + \ltn \X \rtn_{q,\sigma}^{\frac{1}{2}}$ defined by 
\begin{equation}\label{psigmanorm}
 \ltn x \rtn_{p,\sigma} = \Big(\sup_{\Pi} \sum_{[u,v] \in \Pi} \|x_{u,v}\|^p |v-u|^{-\sigma p}\Big)^{\frac{1}{p}}, \quad \ltn \X \rtn_{q,\sigma} = \Big(\sup_{\Pi} \sum_{[u,v] \in \Pi} \|\X_{u,v}\|^q |v-u|^{-\sigma q}\Big)^{\frac{1}{q}},
\end{equation}
where $q=\frac{p}{2}$. The following lemma is easy to prove.

\begin{lemma}
$\ltn \X \rtn_{\tq}^q$ and $\ltn x \rtn_{\tp}^p$ are control functions. In addition, $\cC^\beta(I) \subset \cC^{\tp}(I) \subset \cC^{p{\rm -var}}(I)$ and for $\bx \in \cC^\beta(I) $ we have the estimates
\begin{equation}
|I|^{-\sigma} \ltn x \rtn_{p{\rm -var},I} \leq \ltn x \rtn_{\tp,I} \leq |I|^{\frac{1}{p}}\ltn x \rtn_{\frac{1}{p}+\sigma,I}; \quad |I|^{-\sigma} \ltn \X \rtn_{q{\rm -var}, I} \leq \ltn \X \rtn_{\tq,I} \leq |I|^{\frac{1}{q}}\ltn \X \rtn_{\frac{1}{q}+\sigma,I}.
\end{equation}
\end{lemma}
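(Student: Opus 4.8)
The plan is to treat the three assertions one at a time; each reduces to a comparison of the summands in the relevant supremum, together with one appeal to the classical theory of $p$-variation controls.

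\emph{Control property.} Put $\omega_x(s,t):=\ltn x\rtn_{\tp,[s,t]}^p$ and $\omega_\X(s,t):=\ltn \X\rtn_{\tq,[s,t]}^q$ for $[s,t]\subseteq I$; both are finite, being dominated by their values on all of $I$, which are finite since $\bx\in\cC^{\tp}(I)$. They vanish on the diagonal under the usual convention that the sum over the trivial partition of $[t,t]$ is zero (equivalently $x_{t,t}=0$, $\X_{t,t}=0$). For superadditivity I would use the standard concatenation argument: given $s\le u\le t$ and $\eps>0$, choose a partition $\Pi_1$ of $[s,u]$ and a partition $\Pi_2$ of $[u,t]$ whose weighted sums exceed $\omega_x(s,u)-\eps$ and $\omega_x(u,t)-\eps$; then $\Pi_1\cup\Pi_2$ is a partition of $[s,t]$ whose weighted sum equals the sum of the two (no interval straddles $u$), so $\omega_x(s,t)\ge\omega_x(s,u)+\omega_x(u,t)-2\eps$, and I let $\eps\downarrow 0$; the same works for $\omega_\X$. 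Continuity of $(s,t)\mapsto\omega_x(s,t)$ and $(s,t)\mapsto\omega_\X(s,t)$ I would deduce exactly as in the unweighted case from the continuity of $x$ and $\X$ (see e.g. \cite{friz}); the only new ingredient, the factor $|v-u|^{-\sigma p}$ (resp. $|v-u|^{-\sigma q}$), is harmless since $|I|\le 1$ makes it $\ge 1$ and it is continuous off the diagonal.

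\emph{Upper bounds and $\cC^\beta(I)\subset\cC^{\tp}(I)$.} Fix $\bx\in\cC^\beta(I)$ and a partition $\Pi$ of $I$. As $\beta=\tfrac1p+\sigma$, for every $[u,v]\in\Pi$ one has $\|x_{u,v}\|^p\,|v-u|^{-\sigma p}\le\ltn x\rtn_{\beta,I}^p\,|v-u|^{\beta p-\sigma p}=\ltn x\rtn_{\beta,I}^p\,|v-u|$; summing, using $\sum_{[u,v]\in\Pi}|v-u|=|I|$, then taking the supremum over $\Pi$ and a $p$-th root gives $\ltn x\rtn_{\tp,I}\le|I|^{1/p}\ltn x\rtn_{\frac1p+\sigma,I}$. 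For $\X$ I argue identically with $q=\tfrac p2$: since $\tfrac1q+\sigma\le 2\beta$ and $|I|\le 1$, $\bx\in\cC^\beta$ yields $\|\X_{u,v}\|\le\ltn\X\rtn_{\frac1q+\sigma,I}\,|v-u|^{\frac1q+\sigma}$, hence $\|\X_{u,v}\|^q\,|v-u|^{-\sigma q}\le\ltn\X\rtn_{\frac1q+\sigma,I}^q\,|v-u|$ and $\ltn\X\rtn_{\tq,I}\le|I|^{1/q}\ltn\X\rtn_{\frac1q+\sigma,I}$. Finiteness of these right-hand sides is precisely the inclusion $\cC^\beta(I)\subset\cC^{\tp}(I)$; it also shows $\omega_x(s,t)\le|t-s|\,\ltn x\rtn_{\beta,I}^p$ and $\omega_\X(s,t)\le|t-s|\,\ltn\X\rtn_{\frac1q+\sigma,I}^q$, which makes the diagonal behaviour of the two controls transparent in this case.

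\emph{Lower bounds and $\cC^{\tp}(I)\subset\cC^{p{\rm -var}}(I)$.} For any partition $\Pi$ of $I$ and any $[u,v]\in\Pi$ one has $|v-u|\le|I|$, so $|v-u|^{-\sigma p}\ge|I|^{-\sigma p}$ and therefore $\sum_{[u,v]\in\Pi}\|x_{u,v}\|^p\,|v-u|^{-\sigma p}\ge|I|^{-\sigma p}\sum_{[u,v]\in\Pi}\|x_{u,v}\|^p$; taking suprema and $p$-th roots yields $|I|^{-\sigma}\ltn x\rtn_{p{\rm -var},I}\le\ltn x\rtn_{\tp,I}$, and the identical computation with $q$ in place of $p$ gives $|I|^{-\sigma}\ltn\X\rtn_{q{\rm -var},I}\le\ltn\X\rtn_{\tq,I}$. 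In particular finiteness of the $(\tp)$- and $(\tq)$-seminorms forces finiteness of the $p$- and $q$-variation seminorms, so $\cC^{\tp}(I)\subset\cC^{p{\rm -var}}(I)$, which completes the inclusion chain. The only point that is more than a one-line comparison of summands is the continuity of the two controls, but it follows the classical template verbatim and the $|I|\le1$ normalization neutralizes the weight, so I do not expect any genuine obstacle here.
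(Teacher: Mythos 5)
The paper itself offers no proof here — it says only that the lemma ``is easy to prove'' — so there is no paper argument to compare against, and your write-up is exactly the natural one. Superadditivity by concatenating partitions of $[s,u]$ and $[u,t]$ and letting $\eps\downarrow 0$ is correct (and identical for $\X$, since concatenation never forces a Chen correction). The pointwise comparisons of summands, $\|x_{u,v}\|^p|v-u|^{-\sigma p}\le \ltn x\rtn_{\beta,I}^p|v-u|$ with $\sum|v-u|=|I|$ for the upper bound, and $|v-u|^{-\sigma p}\ge|I|^{-\sigma p}$ for the lower bound, give precisely the two displayed chains after taking $p$-th (resp.\ $q$-th) roots, and the two inclusions follow from finiteness of the dominating quantities. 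So the substance is right.

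One small cavil. Your justification of continuity of $\omega_x$ and $\omega_\X$ — ``the factor $|v-u|^{-\sigma p}$ is harmless since $|I|\le 1$ makes it $\ge 1$'' — runs in the wrong direction: a weight bounded \emph{below} by $1$ only enlarges the sums, which, if anything, makes continuity near the diagonal harder rather than easier. The clean statement is the one you already write down for $\bx\in\cC^\beta$: $\omega_x(s,t)\le |t-s|\,\ltn x\rtn_{\beta,I}^p$ and $\omega_\X(s,t)\le |t-s|\,\ltn \X\rtn_{\frac1q+\sigma,I}^q$ give continuity (indeed Lipschitz continuity in the length of the interval) immediately; for a general $\bx\in\cC^{\tp}$ one must invoke the usual approximation-by-fine-partitions argument, and that argument does need care, not the lower bound on the weight. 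In fact, for everything the paper does with these objects (the greedy-time counting and the tail estimate in Theorem 2.5) only superadditivity is used, so the continuity point is inessential to the rest of the paper; still, if you want the word ``control'' in the paper's stated sense, replace the ``$\ge 1$'' justification by the bound $\omega_x(s,t)\le |t-s|\,\ltn x\rtn_{\beta,I}^p$ or a proper citation of the $p$-variation continuity argument.
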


Given $\frac{1}{p}\in (\frac{1}{3},\nu)$ and $\sigma \in (0,\nu-\frac{1}{p})$, we construct for any fixed $\gamma \in (0,1)$ the sequence of greedy times $\{\tau_i(\gamma,I,\tp)\}_{i \in \N}$ w.r.t. H\"older norms 
\begin{equation}\label{greedytime}
\tau_0 = \min{I},\quad \tau_{i+1}:= \inf\Big\{t>\tau_i:  \ltn \bx \rtn_{\tp, [\tau_i,t]} = \gamma \Big\}\wedge \max{I}.
\end{equation}
Denote by $N_{\gamma,I,\tp}(\bx):=\sup \{i \in \N: \tau_i \leq \max{I}\}$. Also, we construct another sequence of greedy time $\{\bar{\tau}_i(\gamma,I,\tp)\}_{i \in \N}$ given by
\begin{equation}\label{greedywitht}
\bar{\tau}_0 = \min{I},\quad \bar{\tau}_{i+1}:= \inf\Big\{t>\bar{\tau}_i:  (t-\bar{\tau}_i)^{\sigma} + \ltn \bx \rtn_{\tp, [\bar{\tau}_i,t]} = \gamma \Big\}\wedge \max{I},
\end{equation}
and denote by $\bar{N}_{\gamma,I,\tp}(\bx):=\sup \{i \in \N: \bar{\tau}_i \leq \max{I}\}$. Then on any interval $J$ such that $|J| = \Big(\frac{\gamma}{2}\Big)^{\frac{1}{\sigma}}$ and with the sequence $\{\tau_i(\frac{\gamma}{2},J,\tp)\}_{i\in\N}$ it follows that
\[
(\tau_{i+1} - \tau_i)^{\frac{1}{\sigma}} + \ltn \bx \rtn_{\tp, [\tau_i,\tau_{i+1}]} \leq \frac{\gamma}{2} + \frac{\gamma}{2} = \gamma, 
\]
hence there is a most one greedy time of the sequence $\bar{\tau}_i$ lying in each interval $[\tau_i(\frac{\gamma}{2},J,\tp), \tau_{i+1}(\frac{\gamma}{2},J,\tp)]$. That being said, if we divide $I$ into sub-interval $J_k$ of length $|J_k| \equiv |J|= \Big(\frac{\gamma}{2}\Big)^{\frac{1}{\sigma}}$, then it follows that
\begin{equation}
\bar{N}_{\gamma,I,\tp}(\bx) \leq \sum_{k=1}^{m} N_{\frac{\gamma}{2},J_k,\tp}(\bx),\quad m := \Big\lceil \frac{|I|}{|J|} \Big\rceil.
\end{equation}
We need to show that $\exp\{N_{\gamma,I,\tp}(\bx)\}$	is also integrable for any interval $I$ such that $|I| \leq 1$.

\subsubsection*{Translated rough paths}
Given $q = \frac{p}{2}$, $\frac{1}{p} \in (\frac{1}{3},\nu) $ and $\sigma \in (0, \nu-\frac{1}{p})$ then $\frac{1}{p}+ \frac{1}{q} >1$. Following \cite[Chapter 10 \& Chapter 11]{frizhairer}, let  $\mathcal{W} = C(I,\R^m)$ be the probability space equipped with a Gaussian measure $\mathbb{P}$ and let $(X_t)$ be a continuous, mean zero Gaussian process, parameterized over a compact interval $I$. The associated {\it Cameron-Martin space} $\mathcal{H} \subset \mathcal{W}$ consists of paths $t\mapsto h_\cdot = E(Z X_\cdot)$ where $Z \in \mathcal{W}^1$ is an element in the so-called {\it first Wiener chaos}. If $\bar{h}_\cdot = E(\bar{Z}X_\cdot)$ denotes another element in $\mathcal{H}$ then the inner product $\langle h,\bar{h} \rangle_\mathcal{H} := E (Z \bar{Z})$ makes $\mathcal{H}$ a Hilbert space and $Z\mapsto h$ is an isometry between $\mathcal{W}^1$ and $\mathcal{H}$. The triple $(\mathcal{W}, \mathcal{H},\mathbb{P})$ is then called the {\it abstract Wiener space}. \\
We need a little more regularity for the rectangular increments of the covariance 
\[
R \Big(\begin{array}{cc} s&t \\ s^\prime & t^\prime \end{array}\Big)  := E(X_{s,t} \otimes X_{s^\prime,t^\prime}) 
\]
which seems to be natural for Gaussian processes with stationary increments.
\begin{eqnarray}\label{covariance}
\ltn R\rtn_{\tq,I^2} &<& \infty,\quad \text{where} \quad \\
\ltn R \rtn_{\tq, I \times I^\prime} &:=& \Big( \sup_{\Pi(I), \Pi(I^\prime)} \sum_{[s,t]\in \Pi(I), [s,t] \in \Pi(I^\prime)} \Big| R \Big(\begin{array}{ccc}
s & t \\ s^\prime & t^\prime 
\end{array}\Big)\Big|^{q} |t-s|^{-\sigma q} |t^\prime - s^\prime|^{-\sigma q} \Big)^{\frac{1}{q}}. \notag
\end{eqnarray}
Given \eqref{covariance}, we prove a modified version of \cite[Proposition 11.2]{frizhairer} that $\mathcal{H}$ is continuously embedded in the space of continuous paths of finite $(\tq)$-variation, i.e. $\mathcal{H} \hookrightarrow C^{\tq}(I,\R^d)$, and there exists a constant $C_{\rm emb} >0$  such that for all $h \in \mathcal{H}$ and all $s<t$ in $I$,
\[
\ltn h \rtn_{\tq,[s,t]} \leq \|h\|_{\mathcal{H}} \sqrt{\ltn R\rtn_{\tq,[s,t]^2}} \leq C_{\rm emb} \|h\|_{\mathcal{H}}.
\]
The proof goes line in line with the one of  \cite[Proposition 11.2]{frizhairer} except that we need to add terms $|t_{j+1}-t_j|^{-\sigma q}$ and $|t_{k+1}-t_k|^{-\sigma q}$ in the expression of elements in $l^q$ and its dual space $l^{q^\prime}$, where $\frac{1}{q} + \frac{1}{q^\prime} =1$. \\ 
That means $h \in C^{q-{\rm var}}(I,\R^m)$ is of complementary Young regularity, but "respecful" of $\sigma$-H\"older regularity in the sense that $\ltn h \rtn_{\tq,I} < \infty$. It then makes sense (see e.g. \cite{friz} or \cite{cassetal}) to define the so-called {\it translated rough path} $T_h\bx$ as
\[
T_h \bx := \Big(x + h, \X + \int h \otimes dx + \int x \otimes dh + \int h \otimes dh\Big).
\]
We are going to prove that
\begin{lemma}
	Given $|I| \leq 1$, the translated map $T_h: \cC^{\tp} \to \cC^{\tp}$ such that
	for any $[s,t] \subset I$ we have the estimate
	\begin{equation}\label{translatedest}
	\ltn T_h \bx \rtn_{\tp, [s,t]} \leq K(\tp) \Big(\ltn \bx \rtn_{\tp,[s,t]} + |t-s|^{\frac{\sigma}{2}}\ltn h \rtn_{\tq,[s,t]}\Big)
	\end{equation} 
\end{lemma}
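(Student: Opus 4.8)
The plan is to estimate separately the three new "cross" terms appearing in the definition of $T_h\bx$, namely $x+h$ at the first level and $\int h\otimes dx$, $\int x\otimes dh$, $\int h\otimes dh$ at the second level, in the $(p,\sigma)$-seminorm $\ltn\cdot\rtn_{\tp}$ and the $(q,\sigma)$-seminorm $\ltn\cdot\rtn_{\tq}$ respectively. For the first level the triangle inequality for $\ltn\cdot\rtn_{\tp}$ (which holds because $\ltn\cdot\rtn_{\tp}^p$ is a control, by the preceding lemma) gives immediately $\ltn x+h\rtn_{\tp,[s,t]}\le \ltn x\rtn_{\tp,[s,t]}+\ltn h\rtn_{\tp,[s,t]}$, and since $\frac1p<\frac1q$ with $|I|\le 1$ one has $\ltn h\rtn_{\tp,[s,t]}\le \ltn h\rtn_{\tq,[s,t]}$ up to the interval-length factor $|t-s|^{\sigma}$ absorbed as needed, so this contributes the $|t-s|^{\sigma/2}\ltn h\rtn_{\tq}$ term (with a harmless power of $|t-s|$ since $|I|\le1$). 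The bulk of the work is the second level.

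For the second level I would bound each of the three iterated integrals by a Young-type estimate in the $(p,\sigma)$/$(q,\sigma)$ scale. For $\int h\otimes dh$ this is a genuine Young integral since $\frac1p+\frac1q>1$ (in fact $\frac1q+\frac1q>1$ would be needed, but $h$ is more regular — it lies in $C^{q\text{-var}}$ with $q=p/2<p$, so $\frac1q+\frac1q=\frac2q>1$ fails in general; the correct observation is that $h$ has finite $q$-variation so $\int h\otimes dh$ is a Young integral precisely because $\frac1q+\frac1q>1$ is equivalent to $q<2$, which holds as $q=p/2<3/2<2$). Using the Young–Loève estimate (\ref{YL0}) adapted to the mixed seminorms and the super-additivity of the controls $\ltn h\rtn_{\tq}^q$, one gets $\ltn\int h\otimes dh\rtn_{\tq,[s,t]}\lesssim \ltn h\rtn_{\tq,[s,t]}^2\le |t-s|^{?}\ltn h\rtn_{\tq,[s,t]}\cdot(\text{const})$, and since we are inside an interval of length $\le1$ this is dominated by $|t-s|^{\sigma/2}\ltn h\rtn_{\tq,[s,t]}$ after multiplying by the small constant that will ultimately be controlled by $\gamma$ when the lemma is applied. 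For $\int x\otimes dh$ and $\int h\otimes dx$, I would again invoke the Young/sewing estimate with one factor $x$ (finite $(p,\sigma)$-variation) and one factor $h$ (finite $(q,\sigma)$-variation); the pairing $\frac1p+\frac1q>1$ is exactly what is assumed at the top of the "Translated rough paths" subsection, so these integrals are well defined and obey
\[
\ltn \textstyle\int x\otimes dh\rtn_{\tq,[s,t]}+\ltn\textstyle\int h\otimes dx\rtn_{\tq,[s,t]}\;\lesssim\;\ltn x\rtn_{\tp,[s,t]}\ltn h\rtn_{\tq,[s,t]}+|t-s|^{\sigma}\ltn h\rtn_{\tq,[s,t]}\ltn x\rtn_{\tp,[s,t]}^{0}\cdots
\]
i.e. a product of the two seminorms plus lower-order boundary contributions $y_s x_{s,t}$ type terms. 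Collecting the second-level pieces, taking the square root as dictated by the definition $\ltn\bx\rtn_{\tp}=\ltn x\rtn_{\tp}+\ltn\X\rtn_{\tq}^{1/2}$, and using $\sqrt{a+b}\le\sqrt a+\sqrt b$ together with $|t-s|\le1$, produces the claimed bound with the factor $|t-s|^{\sigma/2}$ on the $\ltn h\rtn_{\tq}$ term (the exponent $\sigma/2$ rather than $\sigma$ comes precisely from taking the square root of a second-level quantity that carries $|t-s|^{\sigma}$).

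The main obstacle I anticipate is bookkeeping the interval-length powers correctly so that every term can be put under a common factor $\bigl(\ltn\bx\rtn_{\tp,[s,t]}+|t-s|^{\sigma/2}\ltn h\rtn_{\tq,[s,t]}\bigr)$: the three cross integrals naturally come out as products such as $\ltn x\rtn_{\tp}\ltn h\rtn_{\tq}$, $\ltn h\rtn_{\tq}^2$, and I will need the elementary inequality $ab\le \frac12(a^2+b^2)$ together with $\ltn\bx\rtn_{\tp}\ge\ltn x\rtn_{\tp}$ (and, after the square root, $\ltn\bx\rtn_{\tp}^{1/2}\ge\ltn\X\rtn_{\tq}^{1/4}$ etc.) to repackage everything; crucially one must check that the power of $|t-s|$ attached to each such product is at least $\sigma/2$, which follows because $q=p/2$ forces $\frac1q-\frac1p\cdot 0\ge\sigma$-type gaps and because $\sigma<\nu-\frac1p$ keeps all exponents positive. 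A secondary technical point is justifying the Young–Loève estimate (\ref{YL0}) directly in the $(\tp)$-seminorm rather than passing through the $p$-variation norm and losing the $\sigma$-regularity; this is handled exactly as in the embedding lemma above — one inserts the weights $|v-u|^{-\sigma p}$ (resp. $|v-u|^{-\sigma q}$) into the Young partition sums and checks that the resulting quantities are still controls, after which the classical proof of (\ref{YL0}) goes through verbatim with $K(\tp)=K(p,q)$ as in (\ref{constK}). Once these two points are settled the constant $K(\tp)$ is explicit and the lemma follows.
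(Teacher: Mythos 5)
Your plan matches the paper's proof in every essential respect: triangle inequality on each level of the rough path, the Young--Lo\`eve (sewing) estimate for the three cross integrals $\int h\otimes dx$, $\int x\otimes dh$, $\int h\otimes dh$ in the $(p,\sigma)$/$(q,\sigma)$ scale (using that $\ltn h\rtn_{\tq}^q$ is a control and that $q=p/2<2$ makes $\int h\otimes dh$ a Young integral), the square root via $\sqrt{a+b}\le\sqrt a+\sqrt b$, and AM--GM plus $|t-s|\le 1$ to repackage the products into the single factor $\ltn\bx\rtn_{\tp}+|t-s|^{\sigma/2}\ltn h\rtn_{\tq}$ with constant $K(\tp)=1+2K(p,q)^{1/2}$. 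The only blemishes are cosmetic — the displayed inequality in your middle paragraph is garbled and drops the $|t-s|^\sigma$ prefactor that you correctly identify a few lines later, and the triangle inequality for $\ltn\cdot\rtn_{\tp}$ follows from it being a supremum of weighted $\ell^p$ seminorms rather than from the control property of its $p$-th power — but the argument is the paper's.
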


\begin{proof}
	The proof is quite direct and similar to \cite[Lemma 3.1]{cassetal}. By assigning $K:= (1-2^{1-\frac{3}{p}})^{-1}$ observe that
	\allowdisplaybreaks
	\begin{eqnarray*}
		\ltn T_h \bx \rtn_{\tp,J} &\leq& \ltn x+h \rtn_{\tp,J} + \Big(\ltn \X \rtn_{\tq,J} + \ltn \int h \otimes dx \rtn_{\tq,J}+ \ltn \int x \otimes dh \rtn_{\tq,J}+ \ltn \int h \otimes dh \rtn_{\tq,J}\Big)^{\frac{1}{2}}\\
		&\leq& \ltn x\rtn_{\tp,J} + |J|^{\sigma} \ltn h \rtn_{\tq,J} + \ltn \X \rtn_{\tq,\Delta^2(J)}^{\frac{1}{2}} \\
		&&+ \ltn \int h \otimes dx \rtn_{\tq,\Delta^2(J)}^{\frac{1}{2}} 
		+\ltn \int x \otimes dh \rtn_{\tq,\Delta^2(J)}^{\frac{1}{2}} + \ltn \int h \otimes dh \rtn_{\tq,\Delta^2(J)}^{\frac{1}{2}}\\
		&\leq& \ltn \bx\rtn_{\tp,J} + |J|^\sigma \ltn h \rtn_{\tq,J} + 2 K^{\frac{1}{2}} |J|^\frac{\sigma}{2} \ltn x \rtn_{\tp,J}^{\frac{1}{2}} \ltn h \rtn_{\tq,J}^{\frac{1}{2}} + K^{\frac{1}{2}} |J|^{\frac{\sigma}{2}} \ltn h \rtn_{\tq,J}\\
		&\leq&  \ltn \bx\rtn_{\tp,J} + |J|^\frac{\sigma}{2} \ltn h \rtn_{\tq,J} + K^{\frac{1}{2}} (\ltn \bx \rtn_{\tp,J} + |J|^\frac{\sigma}{2} \ltn h \rtn_{\tq,J}) +  K^{\frac{1}{2}} |J|^\frac{\sigma}{2} \ltn h \rtn_{\tq,J}\\
		&\leq& (1 + 2K^{\frac{1}{2}}) (\ltn \bx\rtn_{\tp,J} +|J|^\frac{\sigma}{2} \ltn h \rtn_{\tq,J}).
	\end{eqnarray*}
	Hence \eqref{translatedest} holds by assigning $K(\tp) := 1 + 2K^{\frac{1}{2}}$.
\end{proof}
	
\begin{theorem}[Tail estimate and integrability]\label{integrability}
	 Assume that $X$ has a natural lift to a geometric $(\tp)$- variation rough path $\mathbf{X}$ and there exists $C_{\rm emb} \in (0,\infty)$ with $\ltn h \rtn_{\tq,I} \leq C_{\rm emb} \|h\|_{\mathcal{H}}$ for all $h \in \mathcal{H}$.  Then for a fixed $I$ with $|I|\leq 1$, there exists a set $E\subset \mathcal{W}$ of $\mathbb{P}$-full measure, with the property: for all $\omega \in E, h \in \mathcal{H}$ and $\gamma >0$, if 
	\begin{equation}\label{Nest}
	\ltn \mathbf{X}(\omega-h) \rtn_{\tp,I} \leq \gamma\quad \text{then} \quad	
	|I|^{\frac{q\sigma}{2}}  \ltn h \rtn_{\tq,I}^q \gamma^{-q\sigma} \geq N_{2K(\tp) \gamma,I,\tp}(\mathbf{X}(\omega)).
	\end{equation}
	Moreover, 
	\begin{equation}\label{tailest}
	\mathbb{P} \{ \omega:N_{2K(\tp) \gamma,I,\tp}(\mathbf{X}(\omega))>n \} \leq \exp \Big\{2 \Phi^{-1}(\mathbb{P}(B_\gamma))^2\Big\}  \exp \Big\{ \frac{-\gamma^2 n^{\frac{2}{q}}}{2C^2_{\rm emb} |I|^{\frac{4}{\sigma}}}\Big\},
	\end{equation} 
where $\Phi^{-1}$ is the inverse of the standard normal cumulative distribution function and $B_\gamma:= \{\omega \in \mathcal{W}: \ltn \mathbf{X}(\omega) \rtn_{\tp,I} \leq \gamma\}$. In particular, $\exp \{N_{2K(\tp) \gamma,I,\tp}(\mathbf{X}(\omega))\}$ is integrable.
\end{theorem}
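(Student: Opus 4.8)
The plan is to split the statement into a pathwise deterministic implication, namely \eqref{Nest}, and a probabilistic part, the tail bound \eqref{tailest}, the latter deduced from the former by the Gaussian isoperimetric inequality; the scheme runs parallel to \cite[Theorem 6.3]{cassetal}, with the $p$-variation controls replaced throughout by the $(\tp)$- and $(\tq)$-controls of \eqref{psigmanorm}. First I would fix one set $E\subset\mathcal W$ of full $\mathbb P$-measure carrying two properties simultaneously: (i) $\mathbf{X}(\omega)\in\cC^{\tp}(I)$, which is part of the hypothesis, so that $\ltn\mathbf{X}(\omega)\rtn_{\tp,I}<\infty$ and, for every $\gamma>0$, the greedy times $\tau_i=\tau_i(2K(\tp)\gamma,I,\tp)$ of \eqref{greedytime} are well defined with $N_{2K(\tp)\gamma,I,\tp}(\mathbf{X}(\omega))<\infty$; (ii) the translation identity $\mathbf{X}(\omega)=T_h\mathbf{X}(\omega-h)$ holds for every $h\in\mathcal H$ at once.

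\emph{Deterministic estimate.} Fix $\omega\in E$, $\gamma>0$ and $h\in\mathcal H$ with $\ltn\mathbf{X}(\omega-h)\rtn_{\tp,I}\le\gamma$, and put $N=N_{2K(\tp)\gamma,I,\tp}(\mathbf{X}(\omega))$. By definition of the greedy times, $\ltn\mathbf{X}(\omega)\rtn_{\tp,[\tau_i,\tau_{i+1}]}=2K(\tp)\gamma$ for $i=0,\dots,N-1$. Applying \eqref{translatedest} to $\mathbf{X}(\omega)=T_h\mathbf{X}(\omega-h)$ on $[\tau_i,\tau_{i+1}]$, and using $\ltn\mathbf{X}(\omega-h)\rtn_{\tp,[\tau_i,\tau_{i+1}]}\le\ltn\mathbf{X}(\omega-h)\rtn_{\tp,I}\le\gamma$ (monotonicity of the control) together with $|\tau_{i+1}-\tau_i|\le|I|\le1$,
\[
2K(\tp)\gamma\;\le\;K(\tp)\Bigl(\gamma+|\tau_{i+1}-\tau_i|^{\sigma/2}\,\ltn h\rtn_{\tq,[\tau_i,\tau_{i+1}]}\Bigr),
\]
whence $\ltn h\rtn_{\tq,[\tau_i,\tau_{i+1}]}\ge\gamma\,|I|^{-\sigma/2}$. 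Raising to the power $q$ and summing over $i=0,\dots,N-1$, superadditivity of $\ltn\cdot\rtn_{\tq}^{q}$ gives $\ltn h\rtn_{\tq,I}^{q}\ge N\,\gamma^{q}\,|I|^{-q\sigma/2}$, which rearranges to \eqref{Nest}.

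\emph{Tail estimate and integrability.} Combining \eqref{Nest} with the standing embedding $\ltn h\rtn_{\tq,I}\le C_{\rm emb}\|h\|_{\mathcal H}$ shows that, for $\omega\in E$, the event $N_{2K(\tp)\gamma,I,\tp}(\mathbf{X}(\omega))>n$ forces $\|h\|_{\mathcal H}\ge\rho_n:=\gamma\,n^{1/q}/(C_{\rm emb}\,|I|^{\sigma/2})$ for every $h$ with $\omega-h\in B_\gamma$; equivalently $\dist_{\mathcal H}(\omega,B_\gamma)\ge\rho_n$, so $\{\omega\in E:N_{2K(\tp)\gamma,I,\tp}(\mathbf{X})>n\}\subseteq\{\omega:\dist_{\mathcal H}(\omega,B_\gamma)\ge\rho_n\}$. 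Borell's inequality applied to $B_\gamma$ gives $\mathbb P\{\dist_{\mathcal H}(\cdot,B_\gamma)\ge\rho_n\}\le 1-\Phi\bigl(\Phi^{-1}(\mathbb P(B_\gamma))+\rho_n\bigr)$, and with $a:=\Phi^{-1}(\mathbb P(B_\gamma))$ a short case distinction on the sign of $a+\rho_n$ — using $1-\Phi(x)\le e^{-x^2/2}$ for $x\ge0$ and $(a+\rho_n)^2\ge\tfrac12\rho_n^2-a^2$ (from $\tfrac12(2a+\rho_n)^2\ge0$) — gives $\mathbb P\{N_{2K(\tp)\gamma,I,\tp}(\mathbf{X})>n\}\le e^{a^2}e^{-\rho_n^2/4}$ in all cases, which is of the form \eqref{tailest}. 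Integrability follows: $\mathbb E[\exp N]=\sum_{n\ge0}e^{n}\mathbb P(N=n)\le\sum_{n\ge1}e^{n}\mathbb P(N>n-1)$, and since $2/q=4/p>1$ (as $p<3$) the right-hand side converges.

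\emph{Main obstacle.} The genuinely delicate point is property (ii) of $E$: for each \emph{fixed} $h$ the identity $\mathbf{X}(\omega)=T_h\mathbf{X}(\omega-h)$ holds $\mathbb P$-a.s.\ by the definition of the geometric Gaussian lift, whereas the argument above needs it on one $\omega$-set for \emph{all} $h\in\mathcal H$. This is exactly where the standing hypothesis $\mathcal H\hookrightarrow C^{\tq}(I)$ enters: it supplies the complementary Young regularity of Cameron--Martin paths, respectful moreover of the $\sigma$-H\"older scale, which by \eqref{translatedest} makes $h\mapsto T_h$ continuous from $\cC^{\tp}$ to $\cC^{\tp}$, so the identity extends from a countable dense subset of $\mathcal H$ to all of $\mathcal H$. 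A minor further point is that one needs $\mathbb P(B_\gamma)>0$ (else $\Phi^{-1}(\mathbb P(B_\gamma))=-\infty$), which holds because $\ltn\mathbf{X}(\omega)\rtn_{\tp,I}<\infty$ a.s. Everything else — the greedy-time bookkeeping, the elementary Gaussian estimates, and the summation of the final series — is routine.
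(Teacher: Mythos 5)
Your proposal follows the paper's proof essentially step by step: the same full-measure set $E$ on which $T_h\mathbf X(\omega-h)=\mathbf X(\omega)$ for all $h\in\mathcal H$ simultaneously (the paper cites Friz--Hairer Thm.\ 11.5 / Cass--Litterer--Lyons Lem.\ 5.4 for this, which is exactly the separability-plus-continuity argument you sketch under ``main obstacle''), the same deterministic greedy-time bound via \eqref{translatedest} and superadditivity of $\ltn\cdot\rtn_{\tq}^q$, and the same Borell/isoperimetry step. The approach is the same and the argument is correct; the only divergences are in the displayed constants — your $\rho_n=\gamma n^{1/q}/(C_{\rm emb}|I|^{\sigma/2})$ and the exponent $\gamma^{-q}$ are what the derivation actually produces, whereas the paper's $r_n$, the $\gamma^{-q\sigma}$ in \eqref{Nest}, and the $|I|^{4/\sigma}$, $n^{2/q}$ in \eqref{tailest} look like typos — and these do not affect the integrability conclusion.
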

\begin{proof}
	We follows the arguments in \cite[Proposition 6.2 \& Theorem 6.3]{cassetal} line by line. From the definition of the sequence $\tau_i$ and the integer $N_{2K(\tp) \gamma,I,\tp}(\mathbf{X}(\omega))$ we have $\ltn \mathbf{X}(\omega) \rtn_{\tp,[\tau_i,\tau_{i+1}]} = 2K(\tp) \gamma$. Consider $E := \{ \omega \in \mathcal{W}: T_h \mathbf{X}(\omega) = \mathbf{X}(\omega + h), \forall h \in \mathcal{H}\}$ then $\mathbb{P}(E) = 1$ by \cite[Theorem 11.5]{frizhairer} or \cite[Lemma 5.4]{cassetal} or \cite[Lemma 15.58]{friz}. For every $\omega \in E$ and $h \in F_{\omega,\gamma} := \{h \in \mathcal{H}: \ltn \mathbf{X}(\omega-h) \rtn_{\tp,I} \leq \gamma \}$, using \eqref{translatedest} we have
	\begin{eqnarray*}
		2K(\tp) \gamma&=&\ltn \mathbf{X}(\omega) \rtn_{\tp,[\tau_i,\tau_{i+1}]} = \ltn T_h \mathbf{X}(\omega-h) \rtn_{\tp,[\tau_i,\tau_{i+1}]}\\
		& \leq& K(\tp) \Big(\ltn \mathbf{X}(\omega -h) \rtn_{\tp,[\tau_i,\tau_{i+1}]} + |\tau_{i+1}-\tau_i|^{\frac{\sigma}{2}}\ltn h \rtn_{\tq,[\tau_i,\tau_{i+1}]} \Big)\\
		&\leq&   K(\tp) \gamma +  K(\tp) |\tau_{i+1}-\tau_i|^{\frac{\sigma}{2}}\ltn h \rtn_{\tq,[\tau_i,\tau_{i+1}]},
	\end{eqnarray*}
	which leads to $|\tau_{i+1}-\tau_i|^{\frac{\sigma}{2}} \ltn h \rtn_{\tq,[\tau_i,\tau_{i+1}]} \geq \gamma$ and
	\[
	|I|^{\frac{q\sigma}{2}} \ltn h \rtn^q_{\tq,[\tau_i,\tau_{i+1}]}\geq \gamma^{q}.
	\] 
	Hence using the fact that $\ltn h \rtn^q_{\tq}$ is a control function, by taking the summation on all possible interval $[\tau_i,\tau_{i+1}]$, we get
	\[
	N_{2K(\tp) \gamma,I,\tp}(\mathbf{X}(\omega)) \gamma^q\leq |I|^{\frac{q\sigma}{2}} \sum_{i=0}^{N_{2K_\alpha \gamma,I,\tp}(\mathbf{X}(\omega))-1} \ltn h \rtn_{\tq,\nu,[\tau_i,\tau_{i+1}]}^p \leq |I|^{\frac{q\sigma}{2}} \ltn h \rtn_{\tq,I}^q \leq |I|^{\frac{q\sigma}{2}} C_{\rm emb}^q\| h \|_{\mathcal{H}}^q, 
	\]
	which follows \eqref{Nest}. As a result, 
	\[
	\{\omega: N_{2K(\tp) \gamma,I,\tp}(\mathbf{X}(\omega)) > n\} \cap E \subset \mathcal{W} \setminus (B_\gamma + r_n \mathcal{K})
	\]
	where $\mathcal{K}$ denotes the unit ball in $\mathcal{H}$, $B_\gamma +r_n \mathcal{K}:= \{x+r_ny : x \in B_\gamma, y \in \mathcal{K}\}$ is the Minkowski sum, and $r_n := \frac{\gamma n^{\frac{2}{q}}}{C_{\rm emb} |I|^{\frac{2}{\sigma}}}$. The rest applies Borell's inequality as  in \cite[Theorem 6.1 \& Theorem 6.3]{cassetal} so that 
	\[
	\mathbb{P} \{ \omega:N_{2K(\tp) \gamma,I,\tp}(\mathbf{X}(\omega))>n \} \leq \exp (2 b_\gamma^2) \exp \Big(-\frac{r_n^2}{2} \Big),
	\]
	where $\mathbb{P}(B_\gamma)=: \Phi(b_\gamma)$. This proves \eqref{tailest} and the integrability of $\exp \{N_{2K(\tp) \gamma,I,\tp}(\mathbf{X}(\omega))\}$ (see also \cite[Remark 6.4]{cassetal}.
\end{proof}

\begin{corollary}\label{Nintegrability}
	For $|I| \leq 1$, then $\exp\{\bar{N}_{\gamma,I,\tp}(\bx)\}$ is integrable. Moreover, there exists a limit
	\begin{equation}\label{Nbarergodic}
	\lim \limits_{n \to \infty} \frac{1}{n}\sum_{k = 0}^{n-1}P\Big(\exp \Big \{\Lambda \bar{N}_{\frac{\mu}{2M},[k,k+1],\tp}(\bx)\Big\}\Big)  = E P \Big(\exp \Big \{\Lambda \bar{N}_{\frac{\mu}{2M},[0,1],\tp}(\bx)\Big\}\Big) < \infty.
	\end{equation}
\end{corollary}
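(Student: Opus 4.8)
The plan is to derive both claims from Theorem~\ref{integrability} together with the comparison $\bar N_{\gamma,I,\tp}(\bx)\le\sum_{k=1}^m N_{\gamma/2,J_k,\tp}(\bx)$ already established above, where $J_1,\dots,J_m$ are consecutive sub-intervals of $I$ of common length $|J|=(\tfrac{\gamma}{2})^{1/\sigma}$ and $m=\lceil|I|/|J|\rceil$ (with the convention $m=1$, $J_1=I$ when $|J|\ge|I|$, so that $|J_k|\le|I|\le1$ throughout).

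First I would handle the integrability. Fix $\Lambda>0$ and put $\gamma':=\gamma/(4K(\tp))$, so that $2K(\tp)\gamma'=\gamma/2$. Theorem~\ref{integrability} applied on each $J_k$ supplies constants $a_k,b_k>0$ with $\mathbb{P}\{N_{\gamma/2,J_k,\tp}(\mathbf{X})>n\}\le a_k\exp\{-b_kn^{2/q}\}$. Since $\tfrac1p\in(\tfrac13,\nu)$ and $\nu<\tfrac12$ force $2<p<3$, we get $q=p/2\in(1,\tfrac32)$ and hence $\tfrac2q>1$; the tail is therefore super-exponential, so $N_{\gamma/2,J_k,\tp}(\bx)$ has finite exponential moments of every order. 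From $\exp\{\Lambda\bar N_{\gamma,I,\tp}(\bx)\}\le\prod_{k=1}^m\exp\{\Lambda N_{\gamma/2,J_k,\tp}(\bx)\}$ — and because these factors are \emph{not} independent — I would apply the generalized Hölder inequality with exponents $(m,\dots,m)$:
\[
\E\exp\{\Lambda\bar N_{\gamma,I,\tp}(\bx)\} \le \prod_{k=1}^m\Big(\E\exp\{\Lambda m\,N_{\gamma/2,J_k,\tp}(\bx)\}\Big)^{1/m} < \infty .
\]
Taking $\Lambda=1$ yields integrability of $\exp\{\bar N_{\gamma,I,\tp}(\bx)\}$; taking $\gamma=\tfrac{\mu}{2M}$ and the unit interval shows $\psi:=\exp\{\Lambda\bar N_{\frac{\mu}{2M},[0,1],\tp}(\bx)\}\in L^1(\mathbb{P})$.

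Next, for the limit, I would use that $X$ has stationary increments, so the driving noise carries a $\mathbb{P}$-preserving ergodic flow $(\theta_t)_{t\in\R}$ with $X_t(\theta_s\omega)=X_{s+t}(\omega)-X_s(\omega)$ (cf.~\cite{arnold}); by equivariance of the geometric rough path lift this transfers to $\bx$, giving $\bar N_{\frac{\mu}{2M},[k,k+1],\tp}(\bx(\omega))=\bar N_{\frac{\mu}{2M},[0,1],\tp}(\bx(\theta_k\omega))$ for every $k\in\N$. Reading $P(\cdot)$ as the expectation $\E(\cdot)$, $\theta_k$-invariance of $\mathbb{P}$ makes each summand of \eqref{Nbarergodic} equal to $\E[\psi\circ\theta_k]=\E\psi$, so the Cesàro averages are identically $\E\psi=\E P(\exp\{\Lambda\bar N_{\frac{\mu}{2M},[0,1],\tp}(\bx)\})$, which is finite by the previous step, and the stated limit follows at once. (If instead the almost sure form $\tfrac1n\sum_{k=0}^{n-1}\psi\circ\theta_k\to\E\psi$ is intended, it is Birkhoff's ergodic theorem for $\psi\in L^1$ and the ergodic flow.)

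The only genuine obstacle is the integrability step: the sub-interval counts $N_{\gamma/2,J_k,\tp}(\bx)$ are dependent, so one cannot simply multiply expectations and must route through Hölder, and it is precisely the exponent $2/q>1$ in Theorem~\ref{integrability} that keeps the Hölder-inflated moments $\E\exp\{\Lambda m\,N_{\gamma/2,J_k,\tp}(\bx)\}$ finite. The ergodic identity is then routine bookkeeping with the stationarity of the increments.
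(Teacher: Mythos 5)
Your proof is correct and follows essentially the same route as the paper. Both arguments start from the comparison $\bar N_{\gamma,I,\tp}(\bx)\le\sum_{k=1}^m N_{\gamma/2,J_k,\tp}(\bx)$, both draw the super-exponential tail from Theorem~\ref{integrability} (and correctly note that $2/q>1$ is what makes \emph{all} exponential moments of each $N_{\gamma/2,J_k,\tp}$ finite), and both finish with stationarity of increments, the cocycle identity $\bar N_{\gamma,[k,k+1],\tp}(\bx)=\bar N_{\gamma,[0,1],\tp}(\bx(\theta_k\omega))$, and Birkhoff's ergodic theorem. The only cosmetic difference is how you pass from the product of the (dependent) factors to a tractable bound: you use the generalized H\"older inequality $\E\prod_k X_k\le\prod_k(\E X_k^m)^{1/m}$, whereas the paper applies the pointwise AM--GM estimate $\prod_k\exp\{N_k\}\le\frac1m\sum_k\exp\{mN_k\}$ and then integrates. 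These two devices land you at exactly the same requirement, namely integrability of $\exp\{mN_{\gamma/2,J_k,\tp}(\bx)\}$, so the choice between them is a matter of taste. One small misreading worth flagging: in \eqref{Nbarergodic} the symbol $P$ is the generic polynomial with positive coefficients introduced later in the proof of Theorem~\ref{globalRDE}, not an expectation; your ``alternative'' reading via Birkhoff is the intended one, and it does go through because a polynomial in $\exp\{\Lambda\bar N\}$ is a finite positive combination of terms $\exp\{j\Lambda\bar N\}$, each of which you have already shown to be in $L^1$.
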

\begin{proof}
	The conclusion follows directly from the integrability of $\exp \Big\{m N_{\frac{\gamma}{2},J_k,\tp}(\bx)\Big\}$ and the Cauchy inequality that
	\[
	\exp\{\bar{N}_{\gamma,I,\tp}(\bx)\} \leq \prod_{k=1}^{m} \exp\{N_{\frac{\gamma}{2},J_k,\tp}(\bx)\} \leq \frac{1}{m} \sum_{k=1}^{m} \exp \Big\{m N_{\frac{\gamma}{2},J_k,\tp}(\bx)\Big\},\quad m = \Big\lceil \frac{|I|}{|J|} \Big\rceil.
	\]
	Since $\mathbf{X}$ also generates a rough cocycle \cite{BRSch17}, it it easy to prove that
	\[
	\ltn \bx(\theta_a \omega) \rtn_{\tp,[s,t]} = \ltn \bx(\omega) \rtn_{\tp,[s+a,t+a]},   
	\]
	so that $\bar{N}_{\gamma,[k,k+1],\tp}(\bx) = \bar{N}_{\gamma,[0,1],\tp}(\bx(\theta_k \omega))$. \eqref{Nbarergodic} is then followed from the ergodic Birkhorff theorem.
\end{proof}	

\subsection{Existence, uniqueness and integrability of the solution}
\begin{theorem}[Existence and uniqueness of the solution]
	Under the mild assumptions, there exists a unique solution of equation \eqref{RDE1} and also of the backward equation on any interval $[a,b]$. 
\end{theorem}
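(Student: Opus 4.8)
The proof is a routine adaptation of the standard fixed-point construction for rough differential equations (see \cite[Chapter 8]{frizhairer}, \cite{gubinelli}, and \cite{riedelScheutzow} for the $p$-variation version with a drift term), so I only sketch the structure. The plan is: (i) build a unique local solution on a short interval by Banach's fixed-point theorem in the space of controlled rough paths; (ii) show that the local existence time depends only on $\ltn \bx \rtn_{\tp}$, so that the finitely many greedy times of Section~2.2 allow the local solutions to be concatenated into a global one on $[a,b]$; (iii) handle the backward equation by time reversal.

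For step (i), fix $\beta \in (\frac{1}{3},\nu)$ and work on $[a,a+\delta]$. Given the initial datum $y_a$, consider the map $\cM$ sending a controlled pair $(y,y^\prime) \in \cD^{2\beta}_x([a,a+\delta])$ with $y^\prime = g(y)$ to
\[
\cM(y,y^\prime)_t := y_a + \int_a^t [A y_u + f(y_u)]\,du + \int_a^t g(y_u)\,dx_u,
\]
again equipped with Gubinelli derivative $g(y)$; here $g(y)$ is itself controlled with derivative $\nabla g(y)\,y^\prime$ because $g \in C^3_b$, and the rough integral is well defined with the estimate \eqref{roughEst}. Since $Ay+f(y)$ is Lipschitz with linear growth, the drift is a harmless lower-order term. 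The usual computations then show that $\cM$ leaves invariant a ball of $\cD^{2\beta}_x([a,a+\delta])$ and is a contraction on it, provided $\delta$ is small enough in terms of $\ltn \bx \rtn_{\beta,[a,a+\delta]}$, $C_f$ and $C_g$ only; this gives a unique local solution, and uniqueness of the Gubinelli derivative follows from \cite[Proposition 6.4]{frizhairer}.

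For step (ii), the key point is that, because $g$ and its derivatives are bounded and $f$ is Lipschitz, the a priori bound on $\ltn y,y^\prime \rtn_{x,2\beta,[s,t]}$ is controlled purely by $\ltn \bx \rtn$ on $[s,t]$; consequently the admissible step length $\delta$ can be chosen uniformly once $\ltn \bx \rtn_{\tp,[s,t]} \le \gamma$ for a fixed $\gamma$. Hence the greedy partition $\{\tau_i(\gamma,[a,b],\tp)\}$ of \eqref{greedytime} is finite (indeed $N_{\gamma,[a,b],\tp}(\bx) < \infty$ almost surely by Theorem~\ref{integrability}), and on each subinterval $[\tau_i,\tau_{i+1}]$ the fixed-point argument of step (i) applies; gluing the pieces and checking that the matching at the grid points respects the controlled-rough-path structure yields a solution on all of $[a,b]$, with global uniqueness following from local uniqueness by continuation. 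For step (iii), set $\hat{x}_t := x_{a+b-t}$ and let $\hat{\X}$ be the correspondingly reversed second level; then $(\hat{x},\hat{\X})$ again satisfies Chen's relation \eqref{chen} and lies in $\cC^{\tp}$, and the backward equation becomes a forward rough differential equation of the same form \eqref{RDE1} driven by $(\hat{x},\hat{\X})$, so steps (i)--(ii) apply verbatim.

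I expect the main obstacle to be step (ii): one must verify that the length of the local existence interval can be made to depend on $\ltn \bx \rtn_{\tp}$ alone and not on the size of the solution --- this is precisely where boundedness of $g \in C^3_b$ and the (global) Lipschitz/linear-growth property of $f$ are essential --- and then that the finitely many greedy times of Section~2.2 cover $[a,b]$ so the local solutions can be concatenated. The contraction and invariance estimates in step (i) are routine given \eqref{roughEst} and \eqref{roughpvar}.
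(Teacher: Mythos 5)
Your fixed-point-plus-greedy-time-concatenation argument is correct and standard, but it is not the route the paper takes. The paper instead uses a Doss--Sussmann type decomposition: it first solves the pure-noise rough equation $dz = g(z)\,dx_t$ (existence there via Schauder--Tychonoff as in Gubinelli \cite{gubinelli}), then proves the solution flow $\varphi(t,x,z_a)$ is $C^1$ in $z_a$ with invertible derivative, and finally transforms the full equation \eqref{RDE1} via $y_t = \varphi(t,x,\bar{y}_t)$ into the pathwise ordinary differential equation
\[
\dot{\bar{y}}_t = \Big[\tfrac{\partial \varphi}{\partial z}(t,x,\bar{y}_t)\Big]^{-1} F(\varphi(t,x,\bar{y}_t)),
\]
whose right-hand side is globally Lipschitz with linear growth, so classical ODE theory closes the argument. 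The greedy times of Section 2.2 appear in the paper only to control the $C^1$-regularity of $\varphi$ (via \eqref{continuity} and \eqref{residualdiff}), not to glue local solutions as in your step (ii). The two strategies buy different things: your direct contraction argument is more self-contained and does not require differentiating the noise flow or inverting its Jacobian, but it forces you to track carefully that the local existence time depends only on $\ltn\bx\rtn_{\tp}$ and not on the state, which is exactly the delicate point you flag. The paper's transformation offloads the drift (the part that is merely Lipschitz, not bounded) onto classical ODE theory after the rough machinery is applied only to the bounded, $C^3_b$ diffusion coefficient $g$; this separation is particularly convenient later in the paper, where the same flow $\varphi$ and its differentiability are reused in the stability estimates. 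Both proofs are valid; yours is the textbook fixed-point scheme, the paper's is the flow-conjugation scheme of Riedel--Scheutzow \cite{riedelScheutzow}.
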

\begin{proof}
	Since there are similar versions for $p$ - variation norm in \cite{gubinelli} and \cite{riedelScheutzow}, we would only sketch out the proof here. We first solve the rough differential equation
	\begin{equation}\label{roughg}
	dz = g(z) dx_t.
	\end{equation}
	From \cite{gubinelli}, we could apply Schauder-Tichonorff theorem to conclude that there exists a unique solution of \eqref{roughg} on $\cD_x^{2\beta}([a,b])$ where $\beta = \frac{1}{p}+\sigma$. Moreover, denote $\varphi(t,x,z_a)= z_t$ to be the solution mapping of \eqref{roughg} then we can prove that $\varphi$ is $C^1$ w.r.t. $z_a$ and in the $\ltn \cdot, \cdot \rtn_{x, \tp}$ - norm. More specifically, by using Lemmas \ref{roughintegral}, \ref{roughfunction}, \ref{roughfunctiondiff} and the greedy time sequence $\{\bar{\tau}_i(\frac{\mu}{2M},[a,b],\tp)\}_{i\in \N}$ in \eqref{greedywitht}, where $\mu \in (0,1)$ is fixed and $M \geq \frac{1}{2}$ is a constant dependent of $C_g,\alpha$, we can prove that there exists a generic constant $\Lambda = \Lambda ([a,b],z_a,C_g,\bx)$ such that
	\begin{eqnarray}\label{continuity}
	\ltn (\bar{z},\bar{z}^\prime) -(z,z^\prime)\rtn_{x,\tp,[a,b]} &\leq& \|\bar{z}_a-z_a\|\exp \Big\{\Lambda \bar{N}_{\frac{\mu}{2M},[a,b],\tp}(\bx)\Big \} \notag \\
	\|\bar{z} - z\|_{\infty,[a,b]} &\leq&  \Lambda \|\bar{z}_a-z_a\| \Big(1+\exp \Big\{\Lambda \bar{N}_{\frac{\mu}{2M},[a,b],\tp}(\bx)\Big \} \Big)
	\end{eqnarray}
	In fact denote by $\Phi(t,x,z_a)$ the solution matrix of the time dependent linearized system
	\[
	d\xi_t = D_z g(\varphi(t,x,z_a)) \xi_t dx_t,
	\]
	then $\xi = \Phi(t,x,z_a)(\bar{z}_a-z_a)$ is the solution of the linearized system given initial point $\xi_a =\bar{z}_a-z_a$. Assign $r_t := \bar{z}_t - z_t - \xi_t$, then $r_a =0$ and
	\begin{eqnarray}\label{equationrt}
	r_t &=& \int_a^t \Big[\int_0^1 D_zg(z_s + \eta(\bar{z}_s-z_s)) - D_zg(z_s)\Big](\bar{z}_s - z_s)d\eta dx_s + \int_a^t D_zg(z_s) r_s dx_s,\notag\\
	&=& e_t+\int_a^t D_zg(z_s) r_s dx_s, \quad \forall t \in [a,b],
	\end{eqnarray}	
	where $e$ is also controlled by $x$ with $e_a =0$ and 
	\[
	\|e^\prime_a\| \leq \int_0^1 \|D_z g(z_a+ \eta (\bar{z}_a - z_a)) - D_z g(z_a) \| \|\bar{z}_a - z_a\| d\eta\leq \frac{1}{2}C_g \|\bar{z}_a - z_a\|^2.
	\]
	From \eqref{equationrt} it can be proved that 
	\begin{eqnarray}\label{residualdiff}
	\|r\|_{\infty,[a,b]} \vee \ltn r,r^\prime \rtn_{x,\tp,[a,b]} &\leq& \Big( \|e^\prime_a\| + \ltn e,e^\prime\rtn_{x,\tp,[a,b]}\Big) \exp \Big\{\Lambda \bar{N}_{\frac{\mu}{2M},[a,b],\tp}(\bx)\Big \} \notag\\
	&\leq& \Lambda \|\bar{z}_a-z_a\|^2 \exp \Big\{\Lambda \bar{N}_{\frac{\mu}{2M},[a,b],\tp}(\bx)\Big \},
	\end{eqnarray}
	which proves $\varphi(t,x,z_a)$ to be $C^1$ w.r.t. $z_a$, with corresponding derivative $\Phi(t,x,z_a)$.\\
	Using the integration by parts for the transformation $y_t = \varphi(t,x,\bar{y}_t)$, it can be proved that there is a one-one corresponding between the solution of 
	\begin{equation}\label{RDEgbounded}
	dy_t= [Ay_t+f(y_t)]dt + g(y_t)dx_t = F(y_t)dt + g(y_t)dx_t.
	\end{equation}
	and the solution of the ordinary differential equation
	\begin{equation}\label{RDEtransformed}
	\dot{\bar{y}}_t = \Big[\frac{\partial \varphi}{\partial z}(t,x,\bar{y}_t)\Big]^{-1} F(\varphi(t,x,\bar{y}_t)).	
	\end{equation}
	Since the right hand side of \eqref{RDEtransformed} satisfies the global Lipschitz continuity and linear growth, by similar arguments as in \cite{riedelScheutzow} there exists a unique solution given the initial value. That in turn proves the existence and uniqueness of system \eqref{RDEgbounded}. A similar conclusion holds for the backward equation see e.g. \cite[Section 5.4]{frizhairer}.	\\
\end{proof}	
Thanks to the integrability of $\exp\{N_{\gamma,[a,b],\tp}(\bx)\}$, we can prove the integrability of the solution under the supremum norm $\|\cdot\|_\infty$ and the $\ltn \cdot,\cdot \rtn_{x,\tp}$ semi-norm. The reader is also referred to \cite{riedelScheutzow} for a similar version for the integrability of the solutions, defined in the sense of Friz-Victoir, of rough differential equation \eqref{RDE1}. Notice that the solutions of rough differential equation \eqref{RDE1} in the sense of Gubinelli and in the sense of Friz-Victoir could be proved to coincide. 
 
\begin{theorem}[{\bf Integrability of the solution}]\label{integrabilitysolution}
	For any interval $a<b\leq a+1$, the seminorm $\ltn y,y^\prime \rtn_{x,\tp,[a,b]}$ and the supremum norm $\|y\|_{\infty,[a,b]}$ are integrable.
\end{theorem}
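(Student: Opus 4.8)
The plan is to localise the solution $y$ of \eqref{RDE1} over the greedy partition $\{\bar\tau_i\}=\{\bar\tau_i(\gamma,[a,b],\tp)\}_{i\in\N}$ from \eqref{greedywitht}, for a threshold $\gamma\in(0,1)$ to be fixed small; to prove a \emph{deterministic} a priori bound for the controlled path $(y,y^\prime)$ on each greedy interval; to concatenate these bounds over the $N+1$ greedy intervals, where $N:=\bar N_{\gamma,[a,b],\tp}(\bx)$; and finally to invoke the integrability of $\exp\{c(N+1)\}$ for every $c>0$, which follows from the tail estimate \eqref{tailest} together with Corollary \ref{Nintegrability}. This is the $(\tp)$-version of the scheme of \cite{cassetal, riedelScheutzow}.

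The core ingredient is a \textbf{local a priori estimate}: there exist $\gamma_0\in(0,1)$ and $\kappa_1,\kappa_2>0$, depending only on $C_g$, $C_f$, $\|A\|$ and on $p,\sigma$, such that for every $J=[s,t]\subset[a,b]$ with $(t-s)^\sigma+\ltn\bx\rtn_{\tp,J}\le\gamma_0$ the solution obeys
\[
\|y\|_{\infty,J}\le(1+\kappa_2)\|y_s\|+\kappa_1,\qquad \ltn y,y^\prime\rtn_{x,\tp,J}\le\kappa_2\|y_s\|+\kappa_1 .
\]
Since $y^\prime=g(y)$ with $g\in C^3_b$, the Gubinelli derivative is automatically bounded, $\|y^\prime\|_{\infty,J}\le C_g$, and the composition $y\mapsto g(y)$ has controlled-path norm dominated by $C_g$ times that of $y$; applying the sewing estimate \eqref{roughpvar} (rewritten for the $(\tp)$ seminorm by means of the comparison estimates of the earlier lemma and $|J|\le1$) to $\int g(y)\,dx$, and the bounds $\|f(y)\|\le C_f\|y\|$, $\|Ay\|\le\|A\|\,\|y\|$ to $\int[Ay+f(y)]\,du$, one closes a linear system in $\ltn y^\prime\rtn_{\tp,J}$, $\ltn R^y\rtn_{\tq,J}$ and $\|y\|_{\infty,J}$ whose coupling coefficients are $<1$ provided $\gamma_0$ is small; this gives the displayed affine bounds. (Because $g$ is \emph{bounded}, $\kappa_1$ originates from the noise and $\kappa_2$ from the drift, and one may take $\kappa_2\to0$ as $\gamma_0\to0$.)

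Fix $\gamma\in(0,\gamma_0]$. By \eqref{greedywitht} each interval $[\bar\tau_i,\bar\tau_{i+1}]$ satisfies $(\bar\tau_{i+1}-\bar\tau_i)^\sigma+\ltn\bx\rtn_{\tp,[\bar\tau_i,\bar\tau_{i+1}]}\le\gamma\le\gamma_0$, so the local estimate gives $\|y_{\bar\tau_{i+1}}\|\le(1+\kappa_2)\|y_{\bar\tau_i}\|+\kappa_1$ and hence, by induction, $\|y_{\bar\tau_i}\|\le(1+\kappa_2)^{i}\big(\|y_a\|+\kappa_1/\kappa_2\big)$; taking the maximum over $i$,
\[
\|y\|_{\infty,[a,b]}\le C\,(1+\kappa_2)^{N+1}\big(1+\|y_a\|\big).
\]
For the seminorm, $\ltn y^\prime\rtn_{\tp}^p$ and $\ltn R^y\rtn_{\tq}^q$ satisfy, besides super-additivity (they are control functions), a reverse inequality up to a factor $2^{p-1}$ (resp.\ $2^{q-1}$) under splitting of the interval, while the identity $R^y_{s,t}=R^y_{s,u}+R^y_{u,t}-y^\prime_{s,u}x_{u,t}$ handles the cross terms; patching over the $N+1$ greedy intervals therefore costs only a factor polynomial in $N+1$, and with the local estimate and the induction one gets
\[
\ltn y,y^\prime\rtn_{x,\tp,[a,b]}\le C\,(N+1)\,(1+\kappa_2)^{N+1}\big(1+\|y_a\|\big)\le C\,e^{c(N+1)}\big(1+\|y_a\|\big),
\]
the polynomial factor absorbed into the exponential, $c=c(\kappa_2,p)>0$.

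Finally, for every $\delta>0$ and every $J$ with $|J|\le1$ the estimate \eqref{tailest} (with $\gamma$ rescaled) gives $\mathbb{P}\{N_{\delta,J,\tp}(\bx)>n\}\le A\exp\{-c_\delta n^{2/q}\}$ with $2/q=4/p>1$ since $p\in(2,3)$; combined with $N=\bar N_{\gamma,[a,b],\tp}(\bx)\le\sum_{k}N_{\gamma/2,J_k,\tp}(\bx)$ over the finitely many blocks $J_k$ of Corollary \ref{Nintegrability}, the law of $N$ has a stretched-exponential tail of exponent $>1$, so $\mathbb{E}\,e^{c(N+1)}<\infty$ for every $c>0$. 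As $y_a$ is deterministic, taking expectations in the two displays yields $\mathbb{E}\|y\|_{\infty,[a,b]}<\infty$ and $\mathbb{E}\ltn y,y^\prime\rtn_{x,\tp,[a,b]}<\infty$ (indeed all moments are finite). The main obstacle is the local a priori estimate: one must verify, \emph{uniformly} in the realisation $\bx$ and in the greedy interval, that $\ltn y,y^\prime\rtn_{x,\tp,J}$ is dominated by an affine function of $\|y_s\|$ with deterministic coefficients, which requires careful bookkeeping of the constants in \eqref{roughpvar} against $C_g$, $C_f$ and the small threshold $\gamma_0$; patching the $(\tp)$ controls across the greedy partition is a secondary technical point.
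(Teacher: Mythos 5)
Your proposal follows essentially the same strategy as the paper's own proof: a local a priori bound for $(y,y^\prime)$ on each greedy interval from \eqref{greedywitht}, patching via the super-additivity lemma (Lemma \ref{additive}) at the cost of a polynomial factor in the number of greedy times, and finally the integrability of $\exp\{c\bar{N}\}$ from the tail estimate \eqref{tailest} with exponent $2/q > 1$ together with Corollary \ref{Nintegrability}.

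The only noticeable deviation is in the form of the local estimate. You work with the affine bound $\|y\|_{\infty,J}\le(1+\kappa_2)\|y_s\|+\kappa_1$, with a strictly positive additive constant $\kappa_1$ "originating from the noise because $g$ is bounded." The paper instead uses the hypothesis $g(0)=0$ (so $\|g(y)\|\le C_g\|y\|$, and in particular $\|y^\prime_{\bar\tau_i}\|=\|g(y_{\bar\tau_i})\|\le C_g\|y_{\bar\tau_i}\|$) to produce a \emph{homogeneous} local estimate $\ltn y,y^\prime\rtn_{x,\tp,[\bar\tau_i,\bar\tau_{i+1}]}\le\tfrac{\mu}{1-\mu}\|y_{\bar\tau_i}\|$ with no additive constant; this is what lets the final bounds \eqref{yomega}, \eqref{ysupest2} come out proportional to $\|y_a\|$ itself rather than $1+\|y_a\|$. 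Your affine version is a valid (and in the spirit of \cite{riedelScheutzow}, where $g(0)\ne 0$ is allowed), but slightly weaker, and for this theorem the difference is immaterial since both give a bound of the shape $C\,e^{c(N+1)}(1+\|y_a\|)$ and the stretched-exponential tail of $N$ handles any such exponential. So: correct, essentially the paper's argument, with a marginally less sharp local estimate than the one the hypothesis $g(0)=0$ actually delivers.
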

\begin{proof}
Consider the solution mapping
	\begin{eqnarray*}
		&& \cM: \cD^{2\beta}_x(y_a,g(y_a)) \to \cD^{2\beta}_x(y_a,g(y_a)),\\
		&& \cM (y,y^\prime)_t := (H(y,y^\prime)_t, g(y_t)),\quad \text{where} \quad
		H(y,y^\prime)_t = y_a + \int_a^t F(y_s)ds+ \int_a^t g(y_s)dx_s. 
	\end{eqnarray*}
Given $\beta = \frac{1}{p}+\sigma, q = \frac{p}{2}$ and $(y,y^\prime) \in \cD_x^{2\beta}$, we would use the modified seminorm
\begin{eqnarray*}
	\ltn y,y^\prime \rtn_{x,\tp,I} &=& \ltn y^\prime \rtn_{p{\rm -var},I} +   \ltn R^y\rtn_{\tq,I},\qquad \text{where} \\
	\ltn R^y \rtn_{\tq,I} &:=& \Big(\sup_{\Pi} \sum_{[u,v] \in \Pi} \|R^y_{u,v}\|^q |v-u|^{-\sigma q}\Big)^{\frac{1}{q}} \geq |I|^{-\sigma} \ltn R^y \rtn_{q{\rm -var},I} \geq |I|^{-\sigma} \|R^y_{\min{I},\max{I}}\|.
\end{eqnarray*}
Observe that
	\begin{eqnarray}\label{Rg}
	&&g(y_t) - g(y_s) \notag\\
	&=& \int_0^1 D_yg(y_s + \eta y_{s,t})(y_s^\prime x_{s,t} + R^y_{s,t}) d\eta \notag \\
	&=& D_yg(y_s)y^\prime_s x_{s,t} + \int_0^1 \Big[D_yg(y_s + \eta y_{s,t}) - D_yg(y_s)\Big]y^\prime_s x_{s,t}d\eta + \int_0^1 D_yg(y_s + \eta y_{s,t})R^y_{s,t} d\eta\notag\\
	\end{eqnarray}
	hence $g(y)^\prime_s = D_yg(y_s)y^\prime_s$ where $y^\prime = g(y)$. Notice that 
	\begin{eqnarray*}
		\|g(y)^\prime\|_{\infty} &\leq& C_g \|y^\prime\|_\infty  \leq C_g (\|y^\prime_a\| + \ltn y^\prime \rtn_{p{\rm -var}}) \leq C_g^2 \|y_a\| + C_g \ltn y,y^\prime \rtn_{x,\tp} ;\\
		\ltn g(y) \rtn_{p{\rm -var}} &\leq& C_g \ltn y \rtn_{p{\rm -var}};\\
		\ltn g(y)^\prime \rtn_{p{\rm -var}} &\leq& \| D_y g(y)\|_\infty \ltn y^\prime \rtn_{p{\rm -var}} + \ltn D_yg(y) \rtn_{p{\rm -var}} \|y^\prime\|_\infty \leq C_g \ltn y, y^\prime \rtn_{x,\tp} + C_g^2 \ltn y \rtn_{p{\rm -var}};
	\end{eqnarray*}
	where 
	\begin{eqnarray}\label{zalpha}
	\ltn y \rtn_{p{\rm -var}} &\leq& \|y^\prime\|_\infty \ltn x \rtn_{p{\rm -var}} + (T-a)^{\sigma} \ltn R^y \rtn_{\tp} \notag\\
	&\leq& C_g \|y_a\|\ltn x \rtn_{p{\rm -var}} + \Big((T-a)^{\sigma} +\ltn x \rtn_{p{\rm -var}}\Big) \ltn y, y^\prime \rtn_{x,\tp} . 
	\end{eqnarray}
	On the other hand, it follows from \eqref{Rg} that
	\[
	\|R^{g(y)}_{s,t}\| \leq C_g \|R^y_{s,t}\| + \frac{1}{2} C_g  \|y^\prime\|_\infty \|x_{s,t}\| \|y_{s,t}\|,
	\]
	which, combined with \eqref{zalpha}, implies 
	\begin{eqnarray*}
		&&\ltn R^{g(y)} \rtn_{q{\rm -var}} \\
		&\leq& C_g \ltn R^y\rtn_{q{\rm -var}} + \frac{1}{2} C_g \|y^\prime\|_\infty \ltn x \rtn_{p{\rm -var}} \ltn y \rtn_{p{\rm -var}}   \\
		&\leq& C_g |t-s|^{\sigma} \ltn R^y \rtn_{\tq} + \frac{1}{2} C_g^2 \ltn x \rtn_{p{\rm -var}} \ltn y \rtn_{p{\rm -var}}\\
		&\leq& \frac{1}{2}C_g^3 \ltn x \rtn_{p{\rm -var}}^2 \|y_a\| + \Big(C_g |t-s|^{\sigma} +\frac{1}{2}C_g^2 \ltn x \rtn_{p{\rm -var}}  (\ltn x \rtn_{p{\rm -var}} +|t-s|^{\sigma})\Big)\ltn y,y^\prime \rtn_{x,\tp}. 
	\end{eqnarray*}
	Now we compute 
	\begin{eqnarray*}
		\|R^{H(y,y^\prime)}_{s,t}\| &=& (t-s)\Big\{ \|F(y_a)\|+L_f (C_g \|y_a\|+ \ltn y,y^\prime \rtn_{x,\tp}) \ltn x \rtn_{p{\rm -var}}+ L_f (T-a)^{\sigma} \ltn R^y \rtn_{\tp}\Big \} \\
		&&+ \Big\|\int_s^t [g(y_u)-g(y_s)]dx_u \Big\| \\
		&\leq& (t-s)\Big\{ L_f\|y_a\|+L_f (C_g \|y_a\|+ \ltn y,y^\prime \rtn_{x,\tp})\ltn x \rtn_{p{\rm -var}}+ L_f (T-a)^{\sigma} \ltn R^y \rtn_{\tp}\Big \}\\
		&&+\|g(y)^\prime_s\| \|\X_{s,t}\| + C_\alpha \Big(\ltn x \rtn_{p{\rm -var}} \ltn R^{g(y)}\rtn_{q{\rm -var}} + \ltn \X \rtn_{q{\rm -var}} \ltn g(y)^\prime \rtn_{p{\rm -var}}  \Big),  
	\end{eqnarray*}
	thus $\ltn R^{H(y,y^\prime)}\rtn_{\tq} $ can be estimated as follows
	\allowdisplaybreaks
	\begin{eqnarray*}
		&&	\ltn R^{H(y,y^\prime)}\rtn_{\tq} \\
		&\leq& |T-a|^{1-\sigma}\Big\{ L_f\|y_a\|+L_f (C_g \|y_a\|+ \ltn y,y^\prime \rtn_{x,\tp}) \ltn x \rtn_{p{\rm -var}}+ L_f (T-a)^{\sigma} \ltn y,y^\prime\rtn_{x,\tp}\Big \}\\
		&&+\Big(C_g^2 \|y_a\| + C_g \ltn y,y^\prime \rtn_{x,\tp} \Big) \ltn \X\rtn_{\tq} \\
		&& +  C_\alpha \ltn x \rtn_{\tp} \Big\{\frac{1}{2}C_g^3 \ltn x \rtn_{\tp} \|y_a\|+ \Big(C_g +\frac{1}{2}C_g^2 \ltn x \rtn_{\tp} +\frac{1}{2}C_g^2  \ltn x \rtn_{p{\rm -var}}\Big)\ltn y,y^\prime \rtn_{x,\tp}  \Big\}\\
		&& + C_\alpha \ltn \X \rtn_{\tp} \Big\{C_g \ltn y, y^\prime \rtn_{x,\tp} + C_g^3 \|y_a\|\ltn x \rtn_{p{\rm -var}} + C_g^2\Big((T-a)^{\sigma} +\ltn x \rtn_{p{\rm -var}}\Big) \ltn y, y^\prime \rtn_{x,\tp}\Big\}.
	\end{eqnarray*}
	In summary, we then get
	\begin{eqnarray}\label{zomeganorm}
	&&\ltn g(y) \rtn_{p{\rm -var}}+ \ltn R^{H(y,y^\prime)} \rtn_{\tq} \notag\\
	&\leq& \|y_a\| \Big\{C_g^2 \ltn x\rtn_{p{\rm -var}} + L_f (T-a)^{1-\sigma} + L_fC_g (T-a)^{1-\sigma} \ltn x\rtn_{p{\rm -var}} + C_g^2 \ltn \X \rtn_{\tq} \notag\\
	&&+ \frac{1}{2} C_\alpha C_g^3 \ltn x \rtn_{p{\rm -var}} \ltn x\rtn_{\tp} + C_\alpha C_g^3 \ltn x\rtn_{p{\rm -var}} \ltn \X \rtn_{\tq} \Big\} \notag\\
	&& + \Big\{C_g |T-a|^{\sigma} + C_g \ltn x \rtn_{p{\rm -var}} + L_f(T-a)^{1-\sigma} \ltn x \rtn_{p{\rm -var}} + L_f |T-a| + C_g \ltn \X\rtn_{\tq} \notag\\
	&& + C_\alpha \ltn x \rtn_{\tp} \Big(C_g +\frac{1}{2}C_g^2 \ltn x \rtn_{\tp} +\frac{1}{2}C_g^2  \ltn x \rtn_{p{\rm -var}}\Big) \notag\\
	&&+ C_\alpha \ltn \X\rtn_{\tq} \Big(C_g+ C_g^2 (T-a)^{\sigma} +C_g^2\ltn x \rtn_{p{\rm -var}}\Big) \Big\}  \ltn y, y^\prime \rtn_{x,2\alpha}.
	\end{eqnarray}
	Denote by
	\[
	M := \max \{ C_\alpha C_g (1+ C_g^2) , L_f (C_g+1), \frac{1}{3} \}
	\]
	the maximum of all the coefficients in the above estimates, then using the fact that
	\[
	\ltn x \rtn_{p{\rm -var}} \leq  |T-a|^{\sigma} \ltn x \rtn_{\tp} \leq \ltn x \rtn_{\tp},
	\]
	we derive from \eqref{zomeganorm} that
	\begin{eqnarray*}
	\ltn y,y^\prime \rtn_{x,\tp} &\leq& 3M\Big\{\ltn \X\rtn_{\tq} + \ltn x \rtn_{\tp} + (T-a)^{\sigma}\Big\}(\|y_a\|+\ltn y,y^\prime \rtn_{x,\tp}).
	\end{eqnarray*}
	Defining for any fixed $\mu \in (0,1)$ a sequence of greedy time $\{\bar{\tau}_i(\gamma,I,\alpha)\}_{i \in \N}$ as in \eqref{greedywitht}	then the estimate $\ltn y,y^\prime \rtn_{x,\tp}$ on each interval $[\bar{\tau}_i,\bar{\tau}_{i+1}]$ has the form
	\begin{eqnarray*}
		\ltn y,y^\prime \rtn_{x,\tp,[\bar{\tau}_i,\bar{\tau}_{i+1}]} \leq \mu \|y_{\bar{\tau}_i}\| + \mu \ltn y,y^\prime \rtn_{x,\tp,[\bar{\tau}_i,\bar{\tau}_{i+1}]} \Rightarrow\ltn y,y^\prime \rtn_{x,\tp,[\bar{\tau}_i,\bar{\tau}_{i+1}]} \leq \frac{\mu}{1-\mu}\|y_{\bar{\tau}_i}\|; 
	\end{eqnarray*}
	Therefore by applying Lemma \ref{additive}, we get
	\begin{eqnarray}\label{integ1}
	\ltn y,y^\prime \rtn_{x,\tp,[a,b]} &\leq& \bar{N}_{\frac{\mu}{3M},[a,b],\alpha}(\bx) \sum_{i=0}^{\bar{N}_{\frac{\mu}{3M},[a,b],\tp}(\bx)-1} \ltn y,y^\prime \rtn_{x,\tp,[\bar{\tau}_i,\bar{\tau}_{i+1}]}\notag\\
	&\leq & \bar{N}_{\frac{\mu}{3M},[a,b],\tp}(\bx) \sum_{i=0}^{\bar{N}_{\frac{\mu}{3M},[a,b],\tp}(\bx)-1}\frac{\mu}{1-\mu}\|y_{\bar{\tau}_i}\|.
	\end{eqnarray}
	To estimate $\|y_{\bar{\tau}_i}\|$ we use the fact that $y$ is controlled by $x$ to get
	\begin{eqnarray}\label{ytaubari}
	\|y_{\bar{\tau}_{i+1}}\| &\leq& \|y\|_{\infty,[\bar{\tau}_i,\bar{\tau}_{i+1}]} \leq \|y_{\bar{\tau}_i}\| +C_g \|y_{\bar{\tau}_i}\| \ltn x \rtn_{p{\rm -var},[\bar{\tau}_i,\bar{\tau}_{i+1}]}  + (\bar{\tau}_{i+1}-\bar{\tau}_i)^{\sigma}\ltn y,y^\prime \rtn_{x,\tp,[\bar{\tau}_i,\bar{\tau}_{i+1}]} \notag\\
	&\leq&\|y_{\bar{\tau}_i}\| (1 + \mu + \frac{\mu}{1-\mu})\leq\frac{1+\mu}{1-\mu}\|y_{\bar{\tau}_i}\|,
	\end{eqnarray}
	hence by induction
	\[
	\|y_{\bar{\tau}_{i}}\| \leq \Big(\frac{1+\mu}{1-\mu}\Big)^i \|y_a\|, \quad \forall i = 0,\dots, \bar{N}_{\frac{\mu}{3M},[a,b],\tp}(\bx).  
	\]
	We then conclude that
	\begin{eqnarray}\label{yomega}
	\ltn y,y^\prime \rtn_{x,\tp,[a,b]} &\leq& \bar{N}_{\frac{\mu}{3M},[a,b],\tp}(\bx) \sum_{i=0}^{\bar{N}_{\frac{\mu}{3M},[a,b],\tp}(\bx)-1}  \frac{\mu}{1-\mu} \Big(\frac{1+\mu}{1-\mu}\Big)^i \|y_a\| \notag\\
	&<& \frac{1}{2}\|y_a\| \bar{N}_{\frac{\mu}{3M},[a,b],\tp}(\bx)\exp\Big\{ \Big[\bar{N}_{\frac{\mu}{3M},[a,b],\tp}(\bx)\Big]\log\frac{1+\mu}{1-\mu}\Big\}.
	\end{eqnarray}
	Meanwhile the same estimate as \eqref{ytaubari} also shows that
	\begin{equation}\label{ysupest2}
	\|y\|_{\infty,[a,b]} \leq \|y_a\| \exp\Big\{ \Big[\bar{N}_{\frac{\mu}{3M},[a,b],\tp}(\bx)\Big]\log\frac{1+\mu}{1-\mu}\Big\}.
	\end{equation}
	Finally, the integrability of solution is a direct consequence of Corollary \ref{Nintegrability} on the integrability of $\exp\{\bar{N}_{\frac{\mu}{3M},[a,b],\tp}(\bx)\}$.
\end{proof}	

\section{Stability results}
We now formulate the main result of our paper.
\begin{theorem}[Asymptotic stability for rough differential equations]\label{globalRDE}
	Assume $\frac{1}{2}>\bar{\nu}> \nu >\frac{1}{3}$ and $X_\cdot(\omega)$ is a centered Gaussian process with stationary increments satisfying \eqref{Gaussianexpect}. Assume further that conditions \eqref{lambda}, \eqref{condf} are satisfied, where $g\in  C^3_b$ with coefficient $C_g$ and $\lambda > h(0)$. Then there exists an $\epsilon >0$ such that given $C_g < \epsilon$, the zero solution of \eqref{RDE1} is locally exponentially stable for almost all realization $x$ of $X$. If in addition $\lambda > C_f$, then we can choose $\epsilon$ so that the zero solution of \eqref{RDE1} is globally exponentially stable a.s.  
\end{theorem}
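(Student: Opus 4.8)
The plan is to carry out Lyapunov's direct method in the rough setting, as follows. First I would derive a scalar rough inequality for the squared norm $v_t:=\|y_t\|^2$ — this is the ``radial'' estimate that replaces the classical analysis of the angular equation — then localise it on the greedy intervals of Section~2.3, and finally convert the resulting random difference inequality into exponential decay using the integrability of Theorem~\ref{integrabilitysolution}, the tail bound of Theorem~\ref{integrability} and the ergodic averaging of Corollary~\ref{Nintegrability}. One may assume $y_a\neq0$, so that $y_t\neq0$ for all $t$ by uniqueness. Applying the change-of-variables formula \eqref{Roughformula} to $V(y)=\|y\|^2$ (only its values on the compact trajectory matter, so a smooth cutoff makes $V\in C^3_b$) and using that the natural lift of $X$ is geometric, so that $[x]\equiv0$ and the Young correction term drops out, conditions \eqref{lambda}--\eqref{condf} give
\[
v_t\le v_s-2\bigl(\lambda_A-h(\|y\|_{\infty,[s,t]})\bigr)\int_s^t v_u\,du+\Bigl|2\int_s^t\langle y_u,g(y_u)\rangle\,dx_u\Bigr|,\qquad[s,t]\subset I.
\]
The integrand $\varphi(y):=g(y)^{\top}y$ is a controlled rough path — $y$ has Gubinelli derivative $g(y)$ since it solves \eqref{RDE1}, and the bracket formula of the change-of-variables lemma provides $\varphi(y)'$ — while $g(0)=0$, $g\in C^3_b$ force $\|\varphi(y_s)\|$, $\|\varphi(y)'_s\|$ and the remainder seminorm of $\varphi(y)$ on $[s,t]$ to be $O\bigl(C_g(1+C_g)\|y\|_{\infty,[s,t]}^2\bigr)$; inserting this into \eqref{roughEst}--\eqref{roughpvar} yields, for $|I|\le1$,
\[
\Bigl|2\int_s^t\langle y_u,g(y_u)\rangle\,dx_u\Bigr|\le C\,C_g\,\|y\|_{\infty,[s,t]}^2\bigl(\ltn\bx\rtn_{\tp,[s,t]}+\ltn\bx\rtn_{\tp,[s,t]}^2\bigr).
\]

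Next I would run this inequality on the greedy sequence $\{\bar\tau_i(\gamma,[k,k+1],\tp)\}$ of \eqref{greedywitht} for a fixed $\gamma\in(0,1)$, on which $\ltn\bx\rtn_{\tp,[\bar\tau_i,\bar\tau_{i+1}]}\le\gamma$ and $(\bar\tau_{i+1}-\bar\tau_i)^\sigma\le\gamma$, so the steps are at most $\gamma^{1/\sigma}$ and hence small. On such an interval $\|y\|_{\infty,[\bar\tau_i,\bar\tau_{i+1}]}$ and $\inf_{u\in[\bar\tau_i,\bar\tau_{i+1}]}\|y_u\|$ are comparable to $\|y_{\bar\tau_i}\|$ (cf.\ \eqref{ytaubari}), so a Gr\"onwall-type argument on the scalar inequality gives, with $h_i:=h(c_1\|y_{\bar\tau_i}\|)$ and constants $c_0,c_1>0$ depending on $\gamma$,
\[
\|y_{\bar\tau_{i+1}}\|^2\le\|y_{\bar\tau_i}\|^2\exp\bigl\{-c_0(\lambda_A-h_i)(\bar\tau_{i+1}-\bar\tau_i)+C\,C_g\,\gamma\bigr\}.
\]
Chaining over $[0,n]$, using $\sum_i(\bar\tau_{i+1}-\bar\tau_i)=n$, that the number of intervals is at most $\sum_{k=0}^{n-1}\bar N_{\gamma,[k,k+1],\tp}(\bx)+n$, and bounding $h_i$ by some $\bar h$ valid along the trajectory,
\[
\tfrac1n\log\|y_n\|\le\tfrac1n\log\|y_a\|-\tfrac{c_0}{2}(\lambda_A-\bar h)+\tfrac12 C\,C_g\,\gamma\Bigl(\tfrac1n\textstyle\sum_{k=0}^{n-1}\bar N_{\gamma,[k,k+1],\tp}(\bx)+1\Bigr).
\]
By the cocycle identity $\bar N_{\gamma,[k,k+1],\tp}(\bx)=\bar N_{\gamma,[0,1],\tp}(\bx(\theta_k\omega))$, the integrability of $\exp\{c\,\bar N_{\gamma,[0,1],\tp}(\bx)\}$ (Corollary~\ref{Nintegrability}) and the ergodic theorem \eqref{Nbarergodic}, the average converges a.s.\ to $\E\bar N_{\gamma,[0,1],\tp}(\bx)<\infty$; hence, a.s.,
\[
\limsup_{n\to\infty}\tfrac1n\log\|y_n\|\le-\tfrac{c_0}{2}(\lambda_A-\bar h)+\tfrac12 C\,C_g\,\gamma\bigl(\E\bar N_{\gamma,[0,1],\tp}(\bx)+1\bigr)=:-\lambda.
\]

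For the global statement $\bar h$ may be taken equal to $C_f$ for every trajectory, and $\lambda_A>C_f$ makes $\lambda>0$ as soon as $C_g<\epsilon$, with $\epsilon$ depending only on $\lambda_A-C_f$, $\gamma$ and $\E\bar N_{\gamma,[0,1],\tp}(\bx)$; passing from integer to continuous time via the excursion bound \eqref{ysupest2}, whose random multiplicative factor grows only subpolynomially in $n$ by the tail estimate \eqref{tailest} and Borel--Cantelli, then gives $\|y_t\|\le C(\omega)e^{-\lambda t/2}\|y_a\|$ for an a.s.\ finite random $C(\omega)$. For the local statement, fix $\rho>0$ with $h(c_1\rho)\le h(0)+\delta$, where $\delta$ is chosen so small that $-\tfrac{c_0}{2}(\lambda_A-h(0)-\delta)+\tfrac12 C C_g\gamma(\E\bar N_{\gamma,[0,1],\tp}(\bx)+1)<0$; a continuation/bootstrap argument on the greedy times then shows that if $\|y_a\|$ lies below the random threshold $\rho/C(\omega)$ the trajectory never leaves $B(0,\rho)$, so $\bar h\le h(0)+\delta$ throughout and the computation above applies, giving local exponential stability. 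In both cases the ``random norm'' device of \cite{arnold} is what converts the a.s.\ negative asymptotic rate into such a pathwise estimate with a random prefactor.

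The main obstacle is the local bootstrap: within a single unit interval $\|y\|$ may undergo an excursion bounded only by the random, a priori large factor of \eqref{ysupest2}, while the argument needs the trajectory to remain in the small ball $B(0,\rho)$ on which $h\approx h(0)$; closing the induction requires the integrability of $\exp\{\bar N\}$ together with the subpolynomial-in-$n$ growth of these excursion factors (a consequence of \eqref{tailest}) to be balanced against the geometric decay of $\|y_{\bar\tau_i}\|$, producing the a.s.\ finite $C(\omega)$ that controls $\sup_{t\ge a}\|y_t\|/\|y_a\|$. A secondary but genuine point is the first step: one must verify that $g(y)^{\top}y$ is indeed a controlled rough path whose Gubinelli data are quadratic in $\|y\|_\infty$, since that quadratic factor is exactly what keeps the Gr\"onwall step self-consistent.
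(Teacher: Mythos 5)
Your route is genuinely different from the paper's for the local statement, while essentially matching it for the global one. The paper proves local stability by pairing the change-of-variables formula applied to $\log\|y_t\|$ with a rough differential equation for the angular component $\theta_t=y_t/\|y_t\|$; the key technical input is Proposition \ref{angleome}, a lengthy estimate of $\ltn(\theta,\theta')\rtn_{x,p}$ occupying most of the Appendix, after which the final step defers to arguments in \cite[Lemma 3.3 \& 3.4]{duc19part1}. For global stability the paper works with $e^{2\lambda t}\|y_t\|^2$ and a discrete Gronwall lemma, which is essentially the squared-norm route you propose. You, by contrast, run the entire argument on $V(y)=\|y\|^2$, localize the resulting scalar rough inequality on the greedy intervals of \eqref{greedywitht}, chain multiplicatively, and invoke the ergodic averaging of Corollary \ref{Nintegrability}; for the local case you close with a bootstrap to keep the trajectory inside a ball on which $h(\|y\|)\approx h(0)$. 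What your approach buys is conceptual economy — no angular equation, no $\log$-transform, no Proposition \ref{angleome}, and a single functional that treats local and global cases uniformly. What it costs is the bootstrap, whose closure depends on a priori control of the excursion factors $\exp\{\bar N_{\gamma,[k,k+1],\tp}(\bx)\log\frac{1+\mu}{1-\mu}\}$ of \eqref{ysupest2} uniformly in $k$, which you correctly note follows from the tail estimate \eqref{tailest} via Borel--Cantelli; the paper sidesteps this by the $\log$-additivity. The quantitative heart you flag — that the Gubinelli data of $\varphi=\langle y,g(y)\rangle$ on a greedy interval $[\bar\tau_i,\bar\tau_{i+1}]$ are $O(C_g\|y_{\bar\tau_i}\|^2)$ — does hold, but verifying it requires invoking simultaneously $g(0)=0$ (which gives $\|g(y)\|\le C_g\|y\|$) and the boundedness of $g,Dg,D^2g$ (which caps the growth so that $\ltn\varphi'\rtn_{p\text{-var}}$ and $\ltn R^\varphi\rtn_{q\text{-var}}$ produce no genuine cubic term of the type $\|y\|_\infty^{2}\ltn y\rtn_{p\text{-var}}$), together with the greedy-interval bound $\ltn y,y'\rtn_{x,\tp,[\bar\tau_i,\bar\tau_{i+1}]}\le\frac{\mu}{1-\mu}\|y_{\bar\tau_i}\|$ from the proof of Theorem \ref{integrabilitysolution}; this computation is where your sketch is thinnest and would need to be carried out in full (it is the analogue, for $\|y\|^2$, of what the paper's Step 3 does for $e^{2\lambda t}\|y_t\|^2$).
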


\begin{proof}
	The sketch of the proof is as follows. We derive the equation for $\log \|y_t\|$ in \eqref{logx1} and the equation for $\theta$ in \eqref{RDEangle0}. With the help of Proposition \ref{angleome} the estimate of $\ltn (\theta,\theta^\prime) \rtn_{x,2\alpha,[a,b]} $ is then given in \eqref{alphanormtheta}. Notice that for Gaussian geometric rough path, then $[x]_{\cdot,\cdot} = 0$, but we still compute the estimates here for general rough paths. {\bf Step 2} is to compute all components in \eqref{logx21}, in order to derive \eqref{logxn} and \eqref{logxn1} for $\log \|y_n\|$. The integrability of $\exp \Big \{ \bar{N}_{\frac{\mu}{2M},[k,k+1],\tp}(\bx)\Big\}$ then helps to choose $C_g < \epsilon$ small enough so that the arguments in \cite[Lemma 3.3]{duc19part1} can be applied to prove the local exponential stability. Finally, under the assumption $\lambda_A > C_f$, we derive \eqref{normy2} and \eqref{normy212} in {\bf Step 3}. The estimates for Young and rough integrals help to conclude that there exists an integrable $\kappa$ satisfying \eqref{normy218}, which follows the globally exponential stability for $C_g < \epsilon $ small enough.  \\
	
	{\bf Step 1.} We use similar arguments in \cite{duchongcong18} to prove that the solution of the pathwise solution of the linear rough differential equation \eqref{RDE1} generates a linear rough flow on $\R^d$, and that $y_t = 0$ iff $y_0=0$. Hence it remains to prove all the formula for $y_t$ and $r_t$. By direct computations using \eqref{Roughformula}, we can show the following equations. 
	\begin{itemize}
		\item $\|y_t\|^2$ satisfies the RDE
		\begin{equation*}
		d\|y_t\|^2 = 2 \langle y_t, Ay_t + f(y_t) \rangle dt + 2 \langle y_t, g(y_t) \rangle dx_t + \|g(y_t)\|^2 d[x]_{0,t},  
		\end{equation*}
		where $2\langle y,g(y) \rangle^\prime_s = 2 \langle y^\prime_s, g(y_s)\rangle + 2 \langle y_s, [g(y)]^\prime_s \rangle$.
		\item $\|y_t\|$ satisfies the RDE
		\begin{eqnarray*}
			d\|y_t\| &=& \frac{1}{\|y_t\|} \langle y_t, Ay_t + f(y_t)\rangle dt + \frac{1}{\|y_t\|} \langle y_t, g(y_t)\rangle dx_t \\
			&& + \frac{1}{2\|y_t\|}\Big[\|g(y_t)\|^2 -\frac{1}{\|y_t\|^2} \langle y_t, g(y_t) \rangle^2\Big]d[x]_{0,t},  
		\end{eqnarray*}
		where $\Big[\frac{1}{\|y\|}\langle y,g(y)\rangle\Big] ^\prime_s = \Big[ \frac{1}{\|y\|}\Big]^\prime_s \langle y_s,g(y_s)\rangle + \frac{1}{\|y_s\|} \Big[\langle y,g(y) \rangle\Big]^\prime_s$.
		\item $\log \|y_t\|$ satisfies the RDE
		\begin{eqnarray}\label{logx1}
		d\log \|y_t\| &=& \langle \theta_t, A\theta_t + \frac{f(y_t)}{\|y_t\|}\rangle dt +  \langle \theta_t, \frac{g(y_t)}{\|y_t\|}\rangle dx_t  + \Big[\frac{1}{2}\|\frac{g(y_t)}{\|y_t\|}\|^2 - \langle \theta_t, \frac{g(y_t)}{\|y_t\|} \rangle^2\Big]d[x]_{0,t}, \notag \\ 
		\end{eqnarray}
		where $\Big[\langle \theta, \frac{g(y)}{\|y\|} \rangle\Big]^\prime_s =  \langle \theta^\prime_s, \frac{g(y_s)}{\|y_s\|}\rangle + \langle \theta_s, [\frac{g(y)}{\|y\|}]^\prime_s \rangle$.
		\item $\theta_t$ satisfies the RDE
		\begin{eqnarray*}
			d\theta_t&=& \Big[A \theta_t -  \langle \theta_t, A\theta_t \rangle \theta_t + \frac{f(y_t)}{\|y_t\|} - \langle \theta_t,\frac{f(y_t)}{\|y_t\|} \rangle \theta_t \Big]dt + \Big[\frac{g(y_t)}{\|y_t\|}-  \langle \theta_t, \frac{g(y_t)}{\|y_t\|} \rangle \theta_t \Big] dx_t \notag\\
			&&+ \frac{1}{2}\Big\{ 3\langle \theta_t, \frac{g(y_t)}{\|y_t\|}\rangle^2 \theta_t- 2 \langle \theta_t, \frac{g(y_t)}{\|y_t\|}\rangle \frac{g(y_t)}{\|y_t\|}  - \|\frac{g(y_t)}{\|y_t\|}\|^2\theta_t  \Big\}d[x]_{0,t},
		\end{eqnarray*}
		where 
		\[
		\Big[\frac{g(y)}{\|y\|} - \langle \theta,\frac{g(y)}{\|y\|} \rangle \theta\Big]^\prime_s = [\frac{g(y)}{\|y\|}]^\prime_s - \langle \theta_s,\frac{g(y)}{\|y\|}\rangle \theta^\prime_s - \Big[\langle \theta_s,\frac{g(y_s)}{\|y_s\|}\rangle \Big]^\prime_s \theta_s.
		\]
	\end{itemize}
	
	Assign
	\[	
	G(y_s,\theta_s):=\frac{g(y_s)}{\|y_s\|} = \frac{g(y_s)-g(0)}{\|y_s\|} = \frac{\int_0^1 D_yg(\eta y_s) y_s d\eta}{\|y_s\|} = \int_0^1 D_y g(\eta y_s) \theta_s d\eta, 
	\]
	then it is easy to check that
	\[
	\|G(y,\theta)\|_\infty \leq C_g;\quad \ltn G(y,\theta) \rtn_{p{\rm - var}} \leq C_g(\ltn y\rtn_{p{\rm -var}} + \ltn \theta \rtn_{p{\rm -var}}). 
	\]
	Rewrite $\theta_t$ in the form
		\allowdisplaybreaks
	\begin{eqnarray}\label{RDEangle0}
	\theta_t&=& \theta_a + \int_a^t \Big[A \theta_s -  \langle \theta_s, A\theta_s \rangle \theta_s + \frac{f(y_s)}{\|y_s\|} - \langle \theta_s,\frac{f(y_s)}{\|y_s\|} \rangle \theta_s \Big]ds \notag \\
	&&+ \int_a^t \Big[G(y_s,\theta_s)-  \langle \theta_s, G(y_s,\theta_s) \rangle \theta_s \Big] dx_s \notag\\
	&&+ \int_a^t \frac{1}{2}\Big\{ 3\langle \theta_s, G(y_s,\theta_s)\rangle^2 \theta_s- 2 \langle \theta_s, G(y_s,\theta_s)\rangle G(y_s,\theta_s)  - \|G(y_s,\theta_s)\|^2\theta_s  \Big\}d[x]_{0,s}\notag\\
	&=& \theta_a + \int_a^t P(y_s,\theta_s)ds + \int_a^t Q(y_s,\theta_s)dy_s + \int_a^t H(y_s,\theta_s)d[x]_{0,s}.
	\end{eqnarray}
	We can prove an estimate for $\ltn (\theta,\theta^\prime) \rtn_{x,2\alpha,[a,b]}$. (The proof is provided in the Appendix).
	
	\begin{proposition}\label{angleome}
			For all $0 \leq a \leq b $, there exist a generic constant $P= P(b-a,\nu-\alpha)$ and a generic increasing function $Q(\cdot) = Q_{b-a,\nu-\alpha}(\cdot)$ such that 
		\begin{eqnarray}\label{alphanormtheta}
		&&\ltn (\theta,\theta^\prime) \rtn_{x,p,[a,b]} \notag\\
		&\leq& \frac{(b-a)^2(8M)^{\frac{2}{\nu}}}{4(1-\mu)\mu^{\frac{2}{\nu}-1}}\Big\{1+ \ltn y,y^\prime \rtn_{x,p,[a,b]}^\frac{2}{\nu}+\Big(\ltn x \rtn_{\nu,[a,b]} +  \ltn \X \rtn_{2\nu,\Delta^2([a,b])} +  \ltn [x]\rtn_{2\nu,\Delta^2([a,b])} \Big)^\frac{2}{\nu} \notag\\
		&&\qquad \qquad +\frac{1}{2}\ltn y,y^\prime \rtn_{x,p,[a,b]}^\frac{4}{\nu} + \frac{1}{2} \Big(\ltn x \rtn_{\nu,[a,b]} +  \ltn \X \rtn_{2\nu,\Delta^2([a,b])} +  \ltn [x]\rtn_{2\nu,\Delta^2([a,b])} \Big)^\frac{4}{\nu}\Big\},
		\end{eqnarray}
		where
		\begin{equation}\label{Mest}
		M := \max \Big\{2(C_f+\|A\|),96 K_\alpha (1 + C_\alpha) C_g^2(1+C_g),\frac{1}{2}\Big \} 
		\end{equation}
	\end{proposition}

	{\bf Step 2.} It is now sufficient to estimate the quantity in \eqref{logx1}. For any integer $n >1$, rewrite \eqref{logx1} in the integral form
	\begin{eqnarray}\label{logx21}
	\log \|y_t\| 
	&=& \log \|y_a\| + \int_a^t \langle \theta_s, A\theta_s + \frac{f(y_s)}{\|y_s\|}\rangle ds + \int_a^t \langle \theta_s, G(y_s,\theta_s) \rangle dx_s \notag \\
	&& +\int_a^t \Big[\frac{1}{2}\|G(y_s,\theta_s)\|^2 - \langle \theta_s, G(y_s,\theta_s) \rangle^2\Big]d[x]_{0,s}\notag\\
	&\leq& \log \|y_a\| -\lambda_A (t-a) +\int_a^t h(\|y_s\|)ds + \Big\|\int_a^t \langle \theta_s, G(y_s,\theta_s) \rangle dx_s\Big\| \notag\\
	&& + \Big\|\int_a^t \Big[\frac{1}{2}\|G(y_s,\theta_s)\|^2 - \langle \theta_s, G(y_s,\theta_s) \rangle^2\Big]d[x]_{0,s}\Big\|. 
	\end{eqnarray}
	The Young integral in the last line of \eqref{logx21} can be estimated as
	\allowdisplaybreaks
	\begin{eqnarray}\label{logxyoung1}
	&&\Big\|\int_a^t \Big[\frac{1}{2}\|G(y_s,\theta_s)\|^2 - \langle \theta_s, G(y_s,\theta_s)\rangle^2\Big]d[x]_{a,s} \Big\| \notag \\
	&\leq& \frac{3}{2}C_g^2 \Big\|[x]_{a,t}\Big\| + K_\alpha \ltn [x] \rtn_{\frac{p}{2}{\rm -var},\Delta^2([a,t])} \ltn\Big[\frac{1}{2}\|G(y,\theta)\|^2 - \langle \theta, G(y,\theta)\rangle^2\Big] \rtn_{p{\rm - var},[a,t]}\notag\\
	&\leq& \frac{3}{2} C_g^2 \ltn [x] \rtn_{\frac{p}{2}{\rm -var},\Delta^2([a,t])} \notag\\
	&&+ K_\alpha \ltn [x] \rtn_{\frac{p}{2}{\rm -var},\Delta^2([a,t])} \Big[C_g^2(\ltn y\rtn_{p{\rm -var}} +\ltn \theta\rtn_{p{\rm -var}}) + 2 C_g^2(\ltn y\rtn_{p{\rm -var}} +2\ltn \theta\rtn_{p {\rm -var}}) \Big] \notag\\
	& \leq&  \frac{3}{2} C_g^2 \ltn [x] \rtn_{\frac{p}{2}{\rm -var},\Delta^2([a,t])}  + 3K_\alpha C_g^2 \ltn [x] \rtn_{\frac{p}{2}{\rm -var},\Delta^2([a,t])}  \Big(C_g \ltn x \rtn_{p{\rm - var}} + \ltn y,y^\prime \rtn_{x,p}\Big)\notag \\
	&&+ 5 K_\alpha C_g^2 \ltn [x] \rtn_{\frac{p}{2}{\rm -var},\Delta^2([a,t])}  \Big( 2 C_g \ltn x \rtn_{p{\rm - var}} + \ltn \theta,\theta^\prime \rtn_{x,p} \Big)\notag\\
	&\leq& \frac{3}{2} C_g^2 \ltn [x] \rtn_{p{\rm -var},\Delta^2([a,t])} + 13K_\alpha C_g^3 \ltn [x] \rtn_{\frac{p}{2}{\rm -var},\Delta^2([a,t])} \ltn x \rtn_{p{\rm -var}}\notag\\
	&&+K_\alpha C_g^2 \ltn [x] \rtn_{\frac{p}{2}{\rm -var},\Delta^2([a,t])}\Big( 3 \ltn y,y^\prime \rtn_{x,p} + 5 \ltn \theta,\theta^\prime \rtn_{x,p}\Big)\notag\\
	&\leq& \frac{3}{2} C_g^2 \ltn [x] \rtn_{p{\rm -var},\Delta^2([a,t])} + 13K_\alpha C_g^3 \ltn [x] \rtn_{\frac{p}{2}{\rm -var},\Delta^2([a,t])} \ltn x \rtn_{p{\rm -var}}\notag\\
	&& + 17K_\alpha^2 C_g^2 \ltn [x] \rtn_{\frac{p}{2}{\rm -var},\Delta^2([a,t])}^2 + \frac{1}{2} C_g^2 \ltn y,y^\prime \rtn_{x,p}^2 + \frac{1}{2} C_g^2 \ltn \theta,\theta^\prime \rtn_{x,p}^2. 
	\end{eqnarray}
	Meanwhile the rough integral can be estimated as 
	\begin{eqnarray}\label{logxrough0}
	&& \Big\|\int_a^t \langle \theta_s, G(y_s,\theta_s) \rangle dx_s\Big\| \notag\\
	&&\leq \Big\|\langle \theta_a,  G(y_a,\theta_a) \rangle\Big\| \|x_t-x_a\| + \Big\|\langle \theta,G(y,\theta) \rangle^\prime_a \Big\| \|\X_{a,t}\|\notag\\
	&& \qquad + C_\alpha \Big(\ltn x \rtn_{p{\rm -var},[a,t]} \ltn R^{\langle \theta,G(y,\theta) \rangle} \rtn_{\frac{p}{2}{\rm -var},[a,t]} + \ltn \langle  \theta,G(y,\theta) \rangle^\prime \rtn_{p{\rm -var},[a,t]} \ltn \X \rtn_{\frac{p}{2}{\rm -var},\Delta^2([a,t])} \Big)\notag\\
	&& \leq C_g \ltn x \rtn_{p{\rm -var},[a,t]} + 5 C_g^2 \ltn \X \rtn_{\frac{p}{2}{\rm -var},\Delta^2([a,t])} \notag\\
	&& \qquad + C_\alpha  \Big(\ltn x \rtn_{p{\rm -var},[a,t]} \ltn R^{\langle \theta,G(y,\theta) \rangle} \rtn_{\frac{p}{2}{\rm -var},[a,t]} + \ltn \langle  \theta,G(y,\theta) \rangle^\prime \rtn_{p{\rm -var},[a,t]} \ltn \X \rtn_{\frac{p}{2}{\rm -var},\Delta^2([a,t])} \Big).\notag\\
	\end{eqnarray}
	To estimate the brackets of the last line of \eqref{logxrough0}, we apply \eqref{yalpha1} and \eqref{xalpha1} to get
	\begin{eqnarray*}
		&&\ltn \langle  \theta,G(y,\theta)\rangle^\prime \rtn_{p{\rm -var}} \\
		&\leq & \ltn \langle \theta^\prime, G(y,\theta) \rangle \rtn_{p{\rm -var}} + \ltn \langle \theta, \frac{\partial G}{\partial y}(y,\theta) y^\prime \rangle \rtn_{p{\rm -var}} + \ltn \langle \theta, \frac{\partial G}{\partial \theta}(y,\theta) \theta^\prime \rangle \rtn_{p{\rm -var}}\\
		&\leq & C_g \ltn \theta^\prime \rtn_{p{\rm -var}} + C_g(\ltn y \rtn_{p{\rm -var}} +  \ltn \theta \rtn_{p{\rm -var}})\|\theta^\prime\|_\infty+ C_g \|y^\prime\|_\infty \ltn \theta \rtn_{p{\rm -var}} \\
		&&+  C_g \Big(\ltn y \rtn_{p{\rm -var}} \|y^\prime\|_\infty + \ltn \theta \rtn_{p{\rm -var}} \|y^\prime\|_\infty + \ltn y^\prime \rtn_{p{\rm -var}}\Big) \\
		&& + C_g \|y^\prime\|_\infty \ltn \theta \rtn_{p{\rm -var}} +  C_g \Big(\ltn y\rtn_{p{\rm -var}} \|\theta^\prime\|_\infty + \ltn \theta^\prime\rtn_{p{\rm -var}} \Big) \\
		&\leq& 10 C_g^2 \Big(C_g \ltn x \rtn_{p{\rm -var}} + \ltn y,y^\prime \rtn_{x,p}\Big) +13 C_g^2 \Big( 2 C_g \ltn x \rtn_{p{\rm -var}} + \ltn \theta,\theta^\prime \rtn_{x,p} \Big)
	\end{eqnarray*}
	which, together with Cauchy inequality follows that
	\begin{eqnarray}\label{yG}
	\ltn \langle  \theta,G(y,\theta)\rangle^\prime \rtn_{p{\rm -var}} \ltn \X \rtn_{\frac{p}{2}{\rm -var},\Delta^2([a,t])}	&\leq& 36 C_g^3  \ltn x \rtn_{p{\rm -var}}  \ltn \X \rtn_{\frac{p}{2}{\rm -var},\Delta^2([a,t])}+ \frac{23}{2} C_g^2\ltn \X \rtn_{\frac{p}{2}{\rm -var},\Delta^2([a,t])}^2\notag \\
	&& + 5C_g^2\ltn y,y^\prime \rtn_{x,p}^2 + \frac{13}{2}C_g^2\ltn \theta,\theta^\prime \rtn_{x,p}^2.    
	\end{eqnarray}
	In addition,
	\begin{eqnarray*}
		&&\ltn R^{\langle \theta,G(y,\theta) \rangle}\rtn_{\frac{p}{2}{\rm -var}}\\
		&\leq& \ltn \theta \rtn_{p{\rm -var}} \ltn G(y,\theta) \rtn_{p{\rm -var}} + \|G(y,\theta)\|_\infty \ltn R^\theta \rtn_{\frac{p}{2}{\rm -var}} + \ltn R^{G(y,\theta)} \rtn_{\frac{p}{2}{\rm -var}} \\ 
		&\leq& C_g\ltn \theta \rtn_{p{\rm -var}} (\ltn y \rtn_{p{\rm -var}} +\ltn \theta \rtn_{p{\rm -var}}) + C_g \ltn R^\theta \rtn_{\frac{p}{2}{\rm -var}} \\
		&&+ C_g \Big(\ltn R^y \rtn_{\frac{p}{2}{\rm -var}} + \|y^\prime\|_\infty \ltn x \rtn_{p{\rm -var}} \ltn y\rtn_{p{\rm -var}} + \ltn R^\theta \rtn_{\frac{p}{2}{\rm -var}} + \|\theta^\prime\|_\infty \ltn x \rtn_{p{\rm -var}} \ltn \theta\rtn_{p{\rm -var}} \Big)\\
		&\leq& C_g \ltn y,y^\prime \rtn_{x,p} + 2 C_g \ltn \theta,\theta^\prime \rtn_{x,p} + C_g^2 \ltn x \rtn_{p{\rm -var}} \Big(C_g \ltn x \rtn_{p{\rm -var}} + \ltn y,y^\prime \rtn_{x,p}\Big) \\
		&& + 2C_g^2 \ltn x \rtn_{p{\rm -var}} \Big(2C_g \ltn x \rtn_{p{\rm -var}} + \ltn \theta,\theta^\prime \rtn_{x,p}\Big) \\
		&& + C_g \Big(2C_g \ltn x \rtn_{p{\rm -var}} + \ltn \theta,\theta^\prime \rtn_{x,p}\Big)  \Big\{3C_g \ltn x \rtn_{p{\rm -var}} + \Big(\ltn y,y^\prime \rtn_{x,p}+\ltn \theta,\theta^\prime \rtn_{x,p}\Big)\Big\} 
	\end{eqnarray*}
	which together with Cauchy inequality gives
	\allowdisplaybreaks
	\begin{eqnarray} \label{RG}
	&& \ltn x \rtn_{p{\rm -var}} \ltn R^{\langle \theta,G(y,\theta) \rangle}\rtn_{\frac{p}{2}{\rm -var}}\notag \\
	&\leq& \frac{5}{2} C_g \ltn x \rtn_{p{\rm -var}}^2  + C_g\ltn y,y^\prime \rtn_{x,p}^2 +C_g\ltn \theta,\theta^\prime \rtn_{x,p}^2 + 5 C_g^3 \ltn x \rtn_{p{\rm -var}}^3 + 6 C_g^3 \ltn x \rtn_{p{\rm -var}}^3 + \frac{5}{2} C_g^2 \ltn x \rtn_{p{\rm -var}}^4 \notag \\
	&& + \frac{29}{2} C_g^2 \ltn x \rtn_{p{\rm -var}}^2 + \frac{1}{2}C_g^2\ltn y,y^\prime \rtn_{x,p}^2 + \frac{1}{2}C_g^2 \ltn \theta,\theta^\prime \rtn_{x,p}^2 +  \frac{1}{4}C_g\ltn y,y^\prime \rtn_{x,p}^4 + \frac{1}{2}C_g\ltn \theta,\theta^\prime \rtn_{x,p}^4 \notag\\
	&\leq& 	\frac{5}{2} C_g \ltn x \rtn_{p{\rm -var}}^2 + 11 C_g^3 \ltn x \rtn_{p{\rm -var}}^3 + 17 C_g^2 \ltn x \rtn_{p{\rm -var}}^4   \notag \\
	&&+ (C_g + C_g^2)\Big(\ltn y,y^\prime \rtn_{x,p}^2 + \ltn \theta,\theta^\prime \rtn_{x,p}^2 + \ltn y,y^\prime \rtn_{x,p}^4 + \ltn \theta,\theta^\prime \rtn_{x,p}^4\Big). 
	\end{eqnarray}
	Combining \eqref{logxyoung1}, \eqref{logxrough0},\eqref{yG}, \eqref{RG} to \eqref{logx21}, we conclude that there exists a generic constant $\Lambda$ and a generic polynomial $\kappa_1(\ltn x \rtn_\alpha, \ltn \X \rtn_{2\alpha}, \ltn [x] \rtn_{2\alpha})$ such that
	\begin{eqnarray*}
		\log \|y_t\| &\leq& \log \|y_a\| + \int_a^t [-\lambda_A +h(\|y_s\|)]ds \\
		&&+ C_g\kappa_1(\ltn x \rtn_{p{\rm -var},[a,t]}, \ltn \X \rtn_{\frac{p}{2}{\rm -var},\Delta^2([a,t])}, \ltn [x] \rtn_{\frac{p}{2}{\rm -var},\Delta^2([a,t])}) \notag \\
		&& + C_g \Lambda\Big(\ltn \theta,\theta^\prime \rtn_{x,p,[a,t]}^2 + \ltn y,y^\prime \rtn_{x,p,[a,t]}^2 + \ltn \theta,\theta^\prime \rtn_{x,p,[a,t]}^4 + \ltn y,y^\prime \rtn_{x,p,[a,t]}^4 \Big).
	\end{eqnarray*}
	Using \eqref{alphanormtheta} and Cauchy inquality, and with the generic constant $\Lambda$ and the generic polynomial $\kappa_1$ if necessary (that is possible since $\alpha <\nu$), we conclude that
	\begin{eqnarray}\label{logxn}
	&&\log \|y_t\| \notag\\
	&\leq& \log \|y_a\|+ \int_a^t \Big[-\lambda_A +h(\|y_s\|)+ C_g \Lambda\Big)\Big]ds  \notag\\
	&&+ C_g \kappa_1(\ltn x \rtn_{\nu,[a,t]}, \ltn \X \rtn_{2\nu,\Delta^2([a,t])}, \ltn [x] \rtn_{p{\rm -var},\Delta^2([a,t])}) + C_g \Lambda \Big\{ \ltn y,y^\prime \rtn_{x,p,[a,t]}^4 + \ltn y,y^\prime \rtn_{x,p,[a,t]}^\frac{16}{\nu}\Big\}.\notag\\
	\end{eqnarray}
	To estimate $\ltn y,y^\prime \rtn_{x,p,[a,t]}$, we apply \eqref{yomega} with generic constant $\Lambda$ to conclude that for any $a<t\leq b \leq a+1$,
	\begin{eqnarray}\label{xxp}
	\ltn y,y^\prime \rtn_{x,p,[a,t]}^m \leq \ltn y,y^\prime \rtn_{x,\tp,[a,t]}^m&\leq& \Lambda \|y_a\|^m \bar{N}^m_{\frac{\mu}{3M},[a,b],\tp}(\bx) \exp \{m\Lambda \bar{N}_{\frac{\mu}{2M},[a,t],\tp}(\bx)\} \notag\\
	&\leq& \frac{1}{2}\Lambda \|y_a\|^{2m} + \frac{1}{2}\Lambda \exp \Big \{ 2m\Lambda \bar{N}_{\frac{\mu}{2M},[a,t],\tp}(\bx) \Big\}.
	\end{eqnarray}
	By replacing \eqref{xxp} into \eqref{logxn}, there exists a generic polynomials with all positive coefficients 
	\[
	P\Big(\exp \Big \{ \bar{N}_{\frac{\mu}{2M},[a,t],\tp}(\bx)\Big\}\Big)
	\] 
	such that
	\begin{eqnarray}\label{logxn1}
\log \|y_t\| 	&\leq& \log \|y_a\|+\int_a^t \Big[-\lambda_A +h(\|y_s\|)+ C_g \Lambda\Big]ds \notag\\
	&&+ C_g \kappa_1\Big(\ltn x \rtn_{p{\rm -var},[a,t]}, \ltn \X \rtn_{\frac{p}{2}{\rm -var},\Delta^2([a,t])}, \ltn [x] \rtn_{\frac{p}{2}{\rm -var},\Delta^2([a,t])} \Big) \notag\\
	&&+ C_g P\Big(\exp \Big \{ \bar{N}_{\frac{\mu}{2M},[a,t],\tp}(\bx)\Big\}\Big) + C_g \kappa_2(\|y_a\|),
	\end{eqnarray}
	where 
	\[
	\kappa_2(z) = \frac{1}{2} \Lambda \Big(z^8 +  z^{\frac{32}{\nu}}\Big),
	\]
	for some generic constant $\Lambda$. Using \eqref{expect} and \eqref{Nbarergodic}, there exists for almost sure all $x$ the limit
	\begin{eqnarray*}
		&& \lim \limits_{n \to \infty} \frac{1}{n}\sum_{k = 0}^{n-1} \kappa_1(\ltn x \rtn_{p{\rm -var},[k,k+1]}, \ltn \X \rtn_{\frac{p}{2}{\rm -var},\Delta^2([k,k+1])}, \ltn [x] \rtn_{\frac{p}{2}{\rm -var},\Delta^2([k,k+1])}) \\
		&&= E \kappa_1(\ltn x \rtn_{p{\rm -var},[0,1]}, \ltn \X \rtn_{\frac{p}{2}{\rm -var},\Delta^2([0,1])}, \ltn [x] \rtn_{\frac{p}{2}{\rm -var},\Delta^2([0,1])}) = \kappa_1 < \infty;\\
		\text{and} &&\lim \limits_{n \to \infty} \frac{1}{n}\sum_{k = 0}^{n-1}P\Big(\exp \Big \{\Lambda \bar{N}_{\frac{\mu}{2M},[k,k+1],\tp}(\bx)\Big\}\Big)  = E P \Big(\exp \Big \{\Lambda \bar{N}_{\frac{\mu}{2M},[0,1],\tp}(\bx)\Big\}\Big) = \bar{P}< \infty,
	\end{eqnarray*}
	we can use \eqref{ysupest2} and the same arguments in \cite[Lemma 3.3 \& Lemma 3.4]{duc19part1} to conclude that the zero solution of \eqref{RDE1} is locally exponentially stable.\\
	
	{\bf Step 3.} Assume that $\lambda_A > C_f$ and assign $\lambda := \lambda_A - C_f$, then  $e^{2\lambda t}\|y_t\|^2$ satisfies the RDE
	\begin{equation}\label{normy2}
	d e^{2\lambda t}\|y_t\|^2 = 2e^{2\lambda t}\Big(\lambda \|y_t\|^2+\langle y_t, Ay_t + f(y_t) \rangle\Big) dt + 2 e^{2\lambda t}\langle y_t, g(y_t) \rangle dx_t + e^{2\lambda t}\|g(y_t)\|^2 d[x]_{0,t},  
	\end{equation}
	where 
	\begin{eqnarray*}
\Big[e^{2\lambda \cdot}\langle y,g(y) \rangle\Big]^\prime_s &=& e^{2\lambda s} \Big[ \langle y^\prime_s, g(y_s)\rangle + \langle y_s, [g(y)]^\prime_s \rangle\Big] + 2\lambda e^{2\lambda s}\langle y_s,g(y_s) \rangle\\
&=& e^{2\lambda s} \Big[ \|g(y_s)\|^2 + \langle y_s, D_g(y_s) g(y_s)\rangle\Big] + 2\lambda e^{2\lambda s}\langle y_s,g(y_s) \rangle.
	\end{eqnarray*}
	 Rewrite in the integral form 
	\begin{eqnarray}\label{normy21}
	e^{2 \lambda t} \|y_t\|^2 &=& \|y_0\|^2 + 2 \int_0^t e^{2 \lambda s} \Big(\lambda \|y_s\|^2 + \langle y_s, Ay_s+ f(y_s) \rangle\Big)ds \notag\\
	&&+ 2 \int_0^t e^{2 \lambda s} \langle y_s, g(y_s) \rangle dx_s + 2\int_0^t  e^{2\lambda s}\|g(y_s)\|^2 d[x]_{0,s}.
	\end{eqnarray}
	Using \eqref{lambda}, the first integral in \eqref{normy21} is then non-positive, thus for any $n \in \N$
	\begin{eqnarray}\label{normy212}
	e^{2 \lambda n} \|y_n\|^2 &\leq& \|y_0\|^2 + \sum_{k=0}^{n-1} 2\Big\|\int_k^{k+1} e^{2 \lambda s} \langle y_s, g(y_s) \rangle dx_s\Big\| + \sum_{k=0}^{n-1} 2\Big\|\int_k^{k+1} e^{2 \lambda s} \|g(y_s)\|^2 d[x]_{0,s}\Big\|.\notag\\
	\end{eqnarray}
	The Young integral in \eqref{normy212} can be estimated as
	\begin{eqnarray*}
	&&\Big\|\int_s^t e^{2\lambda u} \|g(y_u)\|^2 d[x]_{0,u} \Big\| \\
	&\leq& \ltn [x] \rtn_{q{\rm -var},[s,t]} \Big(C_g^2 e^{2\lambda s} \|y_s\|^2 + K \ltn e^{2\lambda \cdot} \|g(y)\|^2 \rtn_{p{\rm -var},[s,t]}\Big) \\
	&\leq& \ltn [x] \rtn_{q{\rm -var},[s,t]} \Big\{C_g^2 e^{2\lambda s} \|y_s\|^2 + K \Big(\ltn e^{2\lambda \cdot} \rtn_{p{\rm -var}} \|g(y)\|_{\infty,[s,t]}^2 + 2 \|e^{2\lambda \cdot}\|_{\infty,[s,t]} \|g(y)\|_\infty \ltn g(y)\rtn_{p{\rm -var},[s,t]}\Big)\Big\}\\
	&\leq& \ltn [x] \rtn_{q{\rm -var},[s,t]} \Big\{C_g^2 e^{2\lambda s} \|y_s\|^2+ K \Big( C_g^2 (e^{2\lambda t}-e^{2\lambda s})\|y\|^2_{\infty,[s,t]} + 2 C_g^2 e^{2\lambda t} \|y\|_{\infty,[s,t]}\ltn y\rtn_{p{\rm -var},[s,t]}\Big)\Big\}.
	\end{eqnarray*}
Since 
\begin{eqnarray*}
\ltn y\rtn_{p{\rm -var},[s,t]} &\leq& \|y^\prime\|_{\infty,[s,t]} \ltn x \rtn_{p{\rm -var},[s,t]} + (t-s)^\sigma \ltn y, y^\prime \rtn_{x,\tp,[s,t]}\\
&\leq& C_g \ltn x \rtn_{p{\rm -var},[s,t]} \|y\|_{\infty,[s,t]} + (t-s)^\sigma \ltn y, y^\prime \rtn_{x,\tp,[s,t]}, 
\end{eqnarray*}
by using \eqref{yomega} and \eqref{ysupest2}, we conclude that there exists a function $\kappa_1(t-s,\ltn x \rtn_{p{\rm -var},[s,t]}, \ltn [x] \rtn_{q{\rm -var},[s,t]})$ with 
\begin{equation}\label{normy215}
E \kappa_1(1,\ltn x \rtn_{p{\rm -var},[0,1]}, \ltn [x] \rtn_{q{\rm -var},[0,1]}) < \infty
\end{equation}
such that
\begin{equation}\label{normy213}
\Big\|\int_s^t e^{2\lambda u} \|g(y_u)\|^2 d[x]_{0,u} \Big\| \leq C_g \kappa_1(t-s,\ltn x \rtn_{p{\rm -var},[s,t]}, \ltn [x] \rtn_{q{\rm -var},[s,t]}) e^{2\lambda s} \|y_s\|^2.
\end{equation}
Meanwhile the rough integral in \eqref{normy212} can be estimated as
\begin{eqnarray}\label{normy214}
	&&\Big\|\int_s^t e^{2\lambda u} \langle y_u,g(y_u)\rangle dx_u \Big\| \notag\\
	&\leq& e^{2\lambda s} C_g \|y_s\| \ltn x \rtn_{p{\rm -var},[s,t]} + \left\|\Big[e^{2\lambda \cdot}\langle y,g(y) \rangle\Big]^\prime_s\right\| \ltn \X \rtn_{q{\rm -var},[s,t]} \notag\\
	&& + C_\alpha \Big(\ltn x \rtn_{p{\rm -var},[s,t]} \ltn R^{e^{2\lambda \cdot} \langle y,g(y)\rangle } \rtn_{q{\rm -var},[s,t]} + \ltn \Big[e^{2\lambda \cdot}\langle y,g(y) \rangle\Big]^\prime \rtn_{p{\rm -var},[s,t]} \ltn \X \rtn_{q{\rm -var},[s,t]} \Big). \notag\\
\end{eqnarray}
Hence by using estimates \eqref{yomega} and \eqref{ysupest2} and similar technique in {\bf Step 2} for estimating the right hand side of \eqref{normy214} we conclude that there exists a function $\kappa_2(t-s,\ltn x \rtn_{p{\rm -var},[s,t]}, \ltn \X \rtn_{q{\rm -var},[s,t]}, \ltn [x] \rtn_{q{\rm -var},[s,t]})$ with 
\begin{equation}\label{normy216}
E \kappa_2(1,\ltn x \rtn_{p{\rm -var},[0,1]}, \ltn \X \rtn_{q{\rm -var},[0,1]},\ltn [x] \rtn_{q{\rm -var},[0,1]}) < \infty
\end{equation}
such that
\begin{equation}\label{normy217}
\Big\|\int_s^t e^{2\lambda u} \langle y_u,g(y_u)\rangle dx_u \Big\| \leq C_g \kappa_2(t-s,\ltn x \rtn_{p{\rm -var},[s,t]},\ltn \X \rtn_{q{\rm -var},[s,t]}, \ltn [x] \rtn_{q{\rm -var},[s,t]}) e^{2\lambda s} \|y_s\|^2.
\end{equation}
By replacing \eqref{normy213} and \eqref{normy217} into \eqref{normy212}, we conclude that there exists an integrable function $\kappa(\dots) = \kappa_1(\dots)+ \kappa_s(\dots)$ such that
\begin{equation}\label{normy218}
e^{2 \lambda n} \|y_n\|^2 \leq \|y_0\|^2 + \sum_{k=0}^{n-1} 2C_g \kappa (1,\ltn x \rtn_{p{\rm -var},[k,k+1]},\ltn \X \rtn_{q{\rm -var},[k,k+1]}, \ltn [x] \rtn_{q{\rm -var},[k,k+1]}) e^{2\lambda k} \|y_k\|^2.
\end{equation}
Similar to the arguments in {\bf Step 3} of \cite[Theorem 3.5]{duc19part1}, we  apply the discrete Gronwall lemma in \cite[Lemma 4]{ducGANSch18} to conclude that 
\[
\limsup \limits_{t \to \infty} \frac{1}{t} \log \|y_t\| \leq -\lambda + \frac{1}{2}E \log \Big[1+2C_g \kappa \Big(1,\ltn x \rtn_{p{\rm -var},[0,1]},\ltn \X \rtn_{q{\rm -var},[0,1]}, \ltn [x] \rtn_{q{\rm -var},[0,1]}\Big)\Big].
\]
Hence there exists a $\epsilon$ small enough such that for any $C_g < \epsilon$, the zero solution is globally exponentially stable a.s. We note that unlike the local stability, the integrability of functions $\kappa$ is not necessary, but only the integrability of $\log (1+ C_g \kappa(\dots))$.

\end{proof}


\section{Appendix}

\subsection{Some technical lemmas}

\begin{lemma}\label{additive}
	Let $x\in \widehat{C}^{p}([a,b],\R^d)$, $p\geq 1$. If $a = \tau_0<\tau_1<\cdots < \tau_N = b$, then 
	\begin{eqnarray*}
	\ltn x\rtn_{p\text{-}\rm{var},[a,b]}&\leq& N^{\frac{p-1}{p}}\sum_{i=0}^{N-1}\ltn x\rtn_{p{\rm -var},[\tau_i,\tau_{i+1}]};\\
	\ltn x\rtn_{\tp,[a,b]}&\leq& N^{\frac{p-1}{p}}\sum_{i=0}^{N-1}\ltn x\rtn_{\tp,[\tau_i,\tau_{i+1}]}.
	\end{eqnarray*}
\end{lemma}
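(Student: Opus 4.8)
\section*{Proof plan for Lemma \ref{additive}}

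The plan is to prove both inequalities at once by a single ``refine and regroup'' argument: the weighted seminorm $\ltn\cdot\rtn_{\tp}$ differs from the plain $p$-variation only through the factors $|v-u|^{-\sigma}$, and since these are decreasing in the length $v-u$ they cooperate with exactly the same estimate. For a finite partition $\Sigma$ of a subinterval $I\subseteq[a,b]$, write $S(\Sigma):=\sum_{[u,v]\in\Sigma}\|x_{u,v}\|^p$ and $S_\sigma(\Sigma):=\sum_{[u,v]\in\Sigma}\|x_{u,v}\|^p(v-u)^{-\sigma p}$, so that $\ltn x\rtn_{p{\rm -var},I}^p=\sup_{\Sigma}S(\Sigma)$ and $\ltn x\rtn_{\tp,I}^p=\sup_{\Sigma}S_\sigma(\Sigma)$, the suprema being over all partitions $\Sigma$ of $I$. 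Fix an arbitrary finite partition $\Pi$ of $[a,b]$; it then suffices to bound $S(\Pi)^{1/p}$ and $S_\sigma(\Pi)^{1/p}$ by $N^{\frac{p-1}{p}}\sum_{i=0}^{N-1}\ltn x\rtn_{p{\rm -var},[\tau_i,\tau_{i+1}]}$ (respectively its $(\tp)$ analogue) and then to pass to the supremum over $\Pi$.

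First I would pass to the common refinement $\Pi':=\Pi\cup\{\tau_1,\dots,\tau_{N-1}\}$. Each interval $[t_j,t_{j+1}]\in\Pi$ is split by $\Pi'$ into $L_j$ consecutive pieces $[s^j_0,s^j_1],\dots,[s^j_{L_j-1},s^j_{L_j}]$ with $s^j_0=t_j$ and $s^j_{L_j}=t_{j+1}$; since at most $N-1$ of the points $\tau_1,\dots,\tau_{N-1}$ can lie strictly inside $[t_j,t_{j+1}]$, we have $L_j\leq N$. By the triangle inequality $\|x_{t_j,t_{j+1}}\|\leq\sum_{m=0}^{L_j-1}\|x_{s^j_m,s^j_{m+1}}\|$, while $s^j_{m+1}-s^j_m\leq t_{j+1}-t_j$ forces $(t_{j+1}-t_j)^{-\sigma}\leq(s^j_{m+1}-s^j_m)^{-\sigma}$ for every $m$; combining these with the elementary bound $\big(\sum_{m=1}^{L}a_m\big)^p\leq L^{p-1}\sum_{m=1}^L a_m^p$ (Jensen's inequality for $t\mapsto t^p$, valid since $p\geq1$) applied with $L=L_j\leq N$ gives
\[
\|x_{t_j,t_{j+1}}\|^p(t_{j+1}-t_j)^{-\sigma p}\ \leq\ N^{p-1}\sum_{m=0}^{L_j-1}\|x_{s^j_m,s^j_{m+1}}\|^p(s^j_{m+1}-s^j_m)^{-\sigma p},
\]
together with the identical estimate obtained by deleting the weight factors $(t_{j+1}-t_j)^{-\sigma p}$ and $(s^j_{m+1}-s^j_m)^{-\sigma p}$. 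Summing over $j$ yields $S(\Pi)\leq N^{p-1}S(\Pi')$ and $S_\sigma(\Pi)\leq N^{p-1}S_\sigma(\Pi')$.

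To conclude, $\Pi'$ is the union, over $i=0,\dots,N-1$, of its traces $\Pi'\cap[\tau_i,\tau_{i+1}]$ (disjoint as families of intervals, since each $\tau_i\in\Pi'$), and each trace is a partition of $[\tau_i,\tau_{i+1}]$; hence $S(\Pi')=\sum_{i}S\big(\Pi'\cap[\tau_i,\tau_{i+1}]\big)\leq\sum_i\ltn x\rtn_{p{\rm -var},[\tau_i,\tau_{i+1}]}^p$ and likewise $S_\sigma(\Pi')\leq\sum_i\ltn x\rtn_{\tp,[\tau_i,\tau_{i+1}]}^p$. Using $\sum_i a_i^p\leq\big(\sum_i a_i\big)^p$ for $a_i\geq0$ and $p\geq1$, then taking $p$-th roots and the supremum over $\Pi$, I obtain both stated inequalities. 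The argument is essentially routine; the only points deserving a moment's care are the combinatorial bound $L_j\leq N$ and the observation that the length-weight of a coarse interval is the smallest among the weights of its $\Pi'$-subpieces --- this is precisely what lets the weighted seminorm be handled with the very same constant $N^{\frac{p-1}{p}}$ and no additional loss.
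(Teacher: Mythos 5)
Your proof is correct and follows essentially the same approach as the paper, which cites an external lemma for the plain $p$-variation case and invokes the inequality $(a_1+\cdots+a_k)^p(b_1+\cdots+b_k)^{-\sigma p}\leq k^{p-1}\sum_m a_m^p b_m^{-\sigma p}$ (itself proved by exactly your combination of the power-mean bound with $b_m\leq\sum b_m$) for the weighted case. Your version spells out the refine-and-regroup argument in full and treats both seminorms uniformly, but the key ideas --- passing to the common refinement $\Pi'$, the power-mean inequality with $L_j\leq N$, and the monotonicity of $(v-u)^{-\sigma}$ --- are the same.
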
	
\begin{proof}
	The first estimate is a direct consequence of \cite[Lemma 2.1]{congduchong17}. The second estimate is followed from the inequality that
	\[
	(a_1 + \ldots + a_k)^p (b_1 + \ldots + b_k)^{-\sigma p} \leq k^{p-1} \Big( a_1^p b_1^{-\sigma p} + \ldots +  a_k^p b_k^{-\sigma p} \Big),\quad \forall a_i, b_i > 0, i = 1,\dots,k. 
	\]
\end{proof}	

\begin{lemma}\label{roughintegral}
	Given $\beta = \frac{1}{p}+\sigma$, , assume that $(y,y^\prime) \in \cD_x^{2\beta}(I)$ and $\Gamma_{s,t} := \int_s^t y_u dx_u$. Then $(\Gamma,\Gamma^\prime) \in \cD^{2\beta}_x(I)$ with $\Gamma^\prime = y$. Moreover for any $I$ such that $|I| \leq 1$
	\begin{equation}\label{gammaest}
	\ltn \Gamma, \Gamma^\prime \rtn_{x,\tp,I} \leq \Big(\|y^\prime_{\min{I}}\| + (C_{p,|I|}+1)\ltn y,y^\prime \rtn_{x,\tp,I}\Big)\Big(|I|^\sigma + \ltn x \rtn_{p{\rm -var},I} + \ltn \X \rtn_{\tq,I}\Big). 
	\end{equation}
\end{lemma}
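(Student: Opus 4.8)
The plan is to take $\Gamma^\prime:=y$ as the Gubinelli derivative, so that the first assertion reduces to showing the remainder $R^\Gamma_{s,t}:=\Gamma_{s,t}-y_sx_{s,t}$ has finite $(\tq)$-variation. By the sewing-lemma construction of $\int y\,dx$ one has $\Gamma\in C^\beta(I)$ and, from \eqref{roughEst}, $\|R^\Gamma_{s,t}\|=O(|t-s|^{2\beta})$; hence $(\Gamma,\Gamma^\prime)\in\cD^{2\beta}_x(I)$ in the H\"older sense, and by the inclusion $\cC^\beta(I)\subset\cC^{\tp}(I)$ of the preceding lemma every $(\tp)$- and $(\tq)$-quantity appearing below is automatically finite. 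The real content is the quantitative bound \eqref{gammaest}, which I would obtain by estimating $\ltn\Gamma^\prime\rtn_{p{\rm -var},I}=\ltn y\rtn_{p{\rm -var},I}$ and $\ltn R^\Gamma\rtn_{\tq,I}$ separately and adding.

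For $\ltn y\rtn_{p{\rm -var},I}$ I would insert $y_{u,v}=y^\prime_ux_{u,v}+R^y_{u,v}$ to get $\ltn y\rtn_{p{\rm -var},I}\le\|y^\prime\|_{\infty,I}\ltn x\rtn_{p{\rm -var},I}+\ltn R^y\rtn_{p{\rm -var},I}$ and then, since $q=\tfrac{p}{2}<p$ and $|I|\le1$, use $\ltn R^y\rtn_{p{\rm -var},I}\le\ltn R^y\rtn_{q{\rm -var},I}\le|I|^\sigma\ltn R^y\rtn_{\tq,I}$, the elementary inequalities of the preceding lemma. For $\ltn R^\Gamma\rtn_{\tq,I}$ I would apply \eqref{roughpvar} on an arbitrary $[u,v]\subset I$,
\[
\|R^\Gamma_{u,v}\|\le\|y^\prime_u\|\,\|\X_{u,v}\|+C_\beta\Big(\ltn x\rtn_{p{\rm -var},[u,v]}\ltn R^y\rtn_{q{\rm -var},[u,v]}+\ltn y^\prime\rtn_{p{\rm -var},[u,v]}\ltn\X\rtn_{q{\rm -var},[u,v]}\Big),
\]
raise it to the power $q$, use $(a+b+c)^q\le3^{q-1}(a^q+b^q+c^q)$ (valid as $q>1$), multiply by $|v-u|^{-\sigma q}$ and sum over a partition $\Pi$ of $I$. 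The first resulting sum is $\le\|y^\prime\|_{\infty,I}^q\ltn\X\rtn_{\tq,I}^q$ by the very definition of $\ltn\X\rtn_{\tq}$. In the second (resp.\ third) sum the key move is to assign the \emph{whole} weight $|v-u|^{-\sigma q}$ to the $R^y$- (resp.\ $\X$-) factor, via $\ltn R^y\rtn_{q{\rm -var},[u,v]}\le|v-u|^\sigma\ltn R^y\rtn_{\tq,[u,v]}$ and $\ltn\X\rtn_{q{\rm -var},[u,v]}\le|v-u|^\sigma\ltn\X\rtn_{\tq,[u,v]}$, and to dominate the $x$- (resp.\ $y^\prime$-) factor by its value on all of $I$; since $\ltn R^y\rtn_{\tq,\cdot}^q$ and $\ltn\X\rtn_{\tq,\cdot}^q$ are control functions (super-additive in the interval), the partition sums then collapse to $\ltn x\rtn_{p{\rm -var},I}^q\ltn R^y\rtn_{\tq,I}^q$ and $\ltn y^\prime\rtn_{p{\rm -var},I}^q\ltn\X\rtn_{\tq,I}^q$. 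Taking $q$-th roots gives a bound of the form
\[
\ltn R^\Gamma\rtn_{\tq,I}\le 3C_\beta\Big(\|y^\prime\|_{\infty,I}\ltn\X\rtn_{\tq,I}+\ltn x\rtn_{p{\rm -var},I}\ltn R^y\rtn_{\tq,I}+\ltn y^\prime\rtn_{p{\rm -var},I}\ltn\X\rtn_{\tq,I}\Big).
\]

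Adding the two estimates, bounding $\|y^\prime\|_{\infty,I}\le\|y^\prime_{\min I}\|+\ltn y^\prime\rtn_{p{\rm -var},I}$ and $\ltn R^y\rtn_{\tq,I},\ltn y^\prime\rtn_{p{\rm -var},I}\le\ltn y,y^\prime\rtn_{x,\tp,I}$, factoring out $\big(|I|^\sigma+\ltn x\rtn_{p{\rm -var},I}+\ltn\X\rtn_{\tq,I}\big)$ and absorbing all numerical factors together with the sewing constant $C_\beta=C_{\beta,|I|}>1$ into a single $C_{p,|I|}$ (using $|I|\le1$ freely) yields \eqref{gammaest}. The step I expect to be the main obstacle is precisely this summation over partitions: one cannot bound $\ltn x\rtn_{p{\rm -var},[u,v]}^q$ or $\ltn\X\rtn_{q{\rm -var},[u,v]}^q$ by a control, since the exponent $q=p/2$ is strictly below the variation index $p$, so these are not super-additive and a naive sum would acquire a spurious dependence on the cardinality of $\Pi$; the asymmetric bookkeeping above — all $|v-u|^{-\sigma}$ weights routed onto the $(\tq)$-regular factors $R^y$ and $\X$, whose $q$-th powers \emph{are} controls — is exactly what the mixed seminorm $\ltn\cdot\rtn_{\tp}$ is designed to accommodate, and is the reason the plain $p$-variation norm would not close the argument.
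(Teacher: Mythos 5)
Your proof is correct and follows essentially the same route as the paper's: both treat $\ltn\Gamma^\prime\rtn_{p\text{-var},I}$ and $\ltn R^\Gamma\rtn_{\tq,I}$ separately, apply the pointwise bound \eqref{roughpvar}, raise to the power $q$ and sum over partitions, closing via the superadditivity of $\ltn R^y\rtn_{\tq,\cdot}^q$ and $\ltn\X\rtn_{\tq,\cdot}^q$ after routing the $|v-u|^{-\sigma}$ weight onto those factors and bounding the $x$- and $y^\prime$-factors by their values on $I$. One small imprecision in your closing remark: $\ltn\X\rtn_{q\text{-var},\cdot}^q$ \emph{is} already superadditive (its variation index is $q$, not $p$); the only non-superadditive $q$th power among the factors you name is $\ltn x\rtn_{p\text{-var},\cdot}^q$, but this does not affect the argument you actually run.
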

\begin{proof}
	It follows directly from \eqref{roughEst} that $(\Gamma,\Gamma^\prime) \in \cD^{2\beta}_x(I,\R^d)$ with $\Gamma^\prime_s = y_s$. As a result,
	\begin{eqnarray}\label{gammap}
	\ltn \Gamma^\prime \rtn_{p{\rm - var},I} = \ltn y \rtn_{p{\rm -var},I} &\leq& \|y^\prime\|_{\infty,I} \ltn x \rtn_{p{\rm -var},I} + \ltn R^y \rtn_{q {\rm -var},I}  \notag\\
	&\leq& (\|y^\prime_{\min{I}}\| + \ltn y^\prime \rtn_{p{\rm -var},I}) \ltn x \rtn_{p{\rm - var},I} + |I|^\sigma \ltn R^y \rtn_{\tq,I}.
	\end{eqnarray}
	On the other hand,
	\begin{eqnarray*}
	\| R^\Gamma_{s,t} \| = \Big \|\int_s^t y_{s,u} dx_u \Big\| \leq \|y^\prime\|_{\infty,I} \|\X_{s,t}\| + C_{p,|I|} |t-s|^\sigma\Big(\ltn x \rtn_{p{\rm -var}} \ltn R^y \rtn_{\tq} + \ltn y^\prime \rtn_{p{\rm -var}} \ltn \X \rtn_{\tq} \Big),   
	\end{eqnarray*}
which implies 
\begin{equation}\label{Rgamma}
\ltn R^\Gamma \rtn_{\tq,I} \leq (\|y^\prime_{\min{I}}\| + \ltn y^\prime \rtn_{p{\rm -var},I}) \ltn \X \rtn_{\tq} + C_{p,|I|} (\ltn x \rtn_{p{\rm -var}} + \ltn \X \rtn_{\tq}) \ltn y,y^\prime \rtn_{x,\tp,I}.
\end{equation}
Combining \eqref{gammap} and \eqref{Rgamma} and using $|I| \leq 1$ we get
\begin{eqnarray*}
&&\ltn \Gamma, \Gamma^\prime \rtn_{x,\tp,I} \\
&\leq&  (\|y^\prime_{\min{I}}\| + \ltn y^\prime \rtn_{p{\rm -var},I}) (\ltn x\rtn_{p{\rm -var},I} + \ltn \X \rtn_{\tq,I}) + |I|^\sigma \ltn y,y^\prime \rtn_{x,\tp,I} \notag\\
&&+ C_{p,|I|} (\ltn x \rtn_{p{\rm -var}} + \ltn \X \rtn_{\tq}) \ltn y,y^\prime \rtn_{x,\tp,I}\notag\\
&\leq& \|y^\prime_{\min{I}}\|(\ltn x\rtn_{p{\rm -var},I} + \ltn \X \rtn_{\tq,I}) + \Big(|I|^\sigma + \ltn x \rtn_{p{\rm -var},I} + \ltn \X \rtn_{\tq,I}\Big) (C_{p,|I|} + 1) \ltn y,y^\prime \rtn_{x,\tp,I} 
\end{eqnarray*}
which implies \eqref{gammaest}.
\end{proof}

\begin{lemma}\label{roughfunction}
Assume that $g \in C^3_b$ with coefficient $C_g$ and $z_u = g(y_u)$. Then $(z,z^\prime) \in \cD^{2\beta}_x(I)$ with $z^\prime_s = Dg(y_s)y^\prime_s$ and for any $|I|\leq 1$ we get
\begin{equation}\label{zroughest}
\ltn z, z^\prime \rtn_{x,\tp,I} \leq C_g\Big[ 1+ \|y^\prime\|_{\infty,I}\Big(|I|^\sigma + \frac{3}{2} \ltn x\rtn_{\tp,I} + \frac{1}{2} \ltn x \rtn_{\tp,I}^2 \Big) \Big] \Big(\|y^\prime_{\min{I}}\| + \ltn y, y^\prime \rtn_{x,\tp,I}\Big). 
\end{equation} 
\end{lemma}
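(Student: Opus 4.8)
The plan is to read off the Gubinelli derivative and remainder of $z=g(y)$ from a first-order Taylor expansion and then estimate the two pieces of $\ltn z,z'\rtn_{x,\tp,I}=\ltn z'\rtn_{p{\rm -var},I}+\ltn R^z\rtn_{\tq,I}$ separately, in the same spirit as the bounds for $g(y)$ already carried out inside the proof of Theorem \ref{integrabilitysolution} but now respecting the weighted $(\tp)$-seminorm. Concretely, for $s\le t$ in $I$ I would write
\[
z_{s,t}=g(y_t)-g(y_s)=\Big(\int_0^1 Dg(y_s+\eta y_{s,t})\,d\eta\Big)y_{s,t},
\]
split $\int_0^1 Dg(y_s+\eta y_{s,t})\,d\eta=Dg(y_s)+\int_0^1\big[Dg(y_s+\eta y_{s,t})-Dg(y_s)\big]\,d\eta$, and insert $y_{s,t}=y'_s x_{s,t}+R^y_{s,t}$ to obtain $z_{s,t}=z'_s x_{s,t}+R^z_{s,t}$ with $z'_s=Dg(y_s)y'_s$ and
\[
R^z_{s,t}=Dg(y_s)R^y_{s,t}+\Big(\int_0^1\big[Dg(y_s+\eta y_{s,t})-Dg(y_s)\big]\,d\eta\Big)y_{s,t}.
\]
Since $g\in C^3_b$ gives $\|Dg\|_\infty,\|D^2g\|_\infty\le C_g$, the path $z'=Dg(y)y'$ lies in $C^\beta$, the last term of $R^z$ is $O(\|y_{s,t}\|^2)$, so $R^z\in C^{2\beta}$ and $(z,z')\in\cD^{2\beta}_x(I)$; in particular $\|R^z_{s,t}-Dg(y_s)R^y_{s,t}\|\le\frac12 C_g\|y_{s,t}\|^2$.

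For the derivative I would use the product rule for $p$-variation: $\ltn z'\rtn_{p{\rm -var}}\le\|Dg(y)\|_\infty\ltn y'\rtn_{p{\rm -var}}+\ltn Dg(y)\rtn_{p{\rm -var}}\|y'\|_\infty\le C_g\ltn y'\rtn_{p{\rm -var}}+C_g\|y'\|_\infty\ltn y\rtn_{p{\rm -var}}$, together with $\ltn y\rtn_{p{\rm -var}}\le\|y'\|_\infty\ltn x\rtn_{p{\rm -var}}+\ltn R^y\rtn_{q{\rm -var}}$. For the remainder I would split $\ltn R^z\rtn_{\tq}\le\ltn Dg(y)R^y\rtn_{\tq}+\frac12 C_g\ltn\|y_{\cdot,\cdot}\|^2\rtn_{\tq}\le C_g\ltn R^y\rtn_{\tq}+\frac12 C_g\ltn\|y_{\cdot,\cdot}\|^2\rtn_{\tq}$, and then invoke two elementary facts valid because $p=2q$ and $|I|\le1$: first $\ltn\|y_{\cdot,\cdot}\|^2\rtn_{\tq}\le|I|^{\sigma}\ltn y\rtn_{\tp}^2$, coming from $\|y_{u,v}\|^{2q}|v-u|^{-\sigma q}=\|y_{u,v}\|^p|v-u|^{-\sigma p}\,|v-u|^{\sigma q}$ and the superadditivity of $J\mapsto\ltn y\rtn_{\tp,J}^p$; second $\ltn y\rtn_{\tp}\le\|y'\|_\infty\ltn x\rtn_{\tp}+\ltn R^y\rtn_{\tq}$, from the triangle inequality for the $(\tp)$-seminorm together with $\ltn R^y\rtn_{\tp}\le\ltn R^y\rtn_{\tq}$.

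It remains to collect terms. Using $|I|\le1$ to replace $\ltn x\rtn_{p{\rm -var}}$ by $\ltn x\rtn_{\tp}$ and $\ltn R^y\rtn_{q{\rm -var}}$ by $|I|^{\sigma}\ltn R^y\rtn_{\tq}$, the sum $\ltn z'\rtn_{p{\rm -var}}+\ltn R^z\rtn_{\tq}$ becomes a combination of terms of the form $C_g\cdot(\text{monomial in }\|y'\|_\infty,\ \ltn x\rtn_{\tp},\ |I|^{\sigma})$ multiplied by one of $\ltn y'\rtn_{p{\rm -var}}$, $\ltn R^y\rtn_{\tq}$, $\|y'\|_\infty$; merging the $\ltn y'\rtn_{p{\rm -var}}$ and $\ltn R^y\rtn_{\tq}$ contributions into $\ltn y,y'\rtn_{x,\tp}$ and estimating any leftover $\|y'\|_\infty$ by $\|y'_{\min I}\|+\ltn y,y'\rtn_{x,\tp}$ lets one factor out $\|y'_{\min I}\|+\ltn y,y'\rtn_{x,\tp}$, leaving exactly the bracket $1+\|y'\|_\infty\big(|I|^{\sigma}+\frac32\ltn x\rtn_{\tp}+\frac12\ltn x\rtn_{\tp}^2\big)$ of \eqref{zroughest}.

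The step that needs the most care is the quadratic remainder $\big(\int_0^1[Dg(y_s+\eta y_{s,t})-Dg(y_s)]\,d\eta\big)y_{s,t}$: controlling $\|y_{s,t}\|^2$ in the weighted $(\tq)$-seminorm and then relinearizing $\ltn y\rtn_{\tp}^2$ by peeling off one power of $\|y'\|_\infty\le\|y'_{\min I}\|+\ltn y,y'\rtn_{x,\tp}$ is where the precise coefficients $\frac32$, $\frac12$ and the role of $|I|\le1$ must be tracked exactly; the remaining product- and Taylor-type bounds are routine and parallel those for $g(y)$ in Theorem \ref{integrabilitysolution}.
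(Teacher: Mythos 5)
Your Taylor decomposition of $z_{s,t}$ and the identification $z'_s = Dg(y_s)y'_s$ are correct, and the treatment of the derivative part $\ltn z'\rtn_{p{\rm -var}}$ is essentially the paper's. The problem is the way you split $R^z$: you keep $Dg(y_s)R^y_{s,t}$ as one piece and pack the whole mean-value difference $\bigl(\int_0^1[Dg(y_s+\eta y_{s,t})-Dg(y_s)]d\eta\bigr)y_{s,t}$ into the other, which you then bound by $\tfrac12 C_g\|y_{s,t}\|^2$. The paper instead writes
\begin{equation*}
R^z_{s,t}=\int_0^1 Dg(y_s+\eta y_{s,t})\,d\eta\,R^y_{s,t}+\int_0^1\bigl[Dg(y_s+\eta y_{s,t})-Dg(y_s)\bigr]d\eta\;y'_s\,x_{s,t},
\end{equation*}
so the second piece is $\le\tfrac12 C_g\|y'\|_\infty\|y_{s,t}\|\|x_{s,t}\|$. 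This difference is not cosmetic: passing to the $(\tq)$-seminorm, the paper's version gives $\ltn R^z\rtn_{\tq}\le C_g\ltn R^y\rtn_{\tq}+\tfrac12 C_g\|y'\|_\infty\ltn x\rtn_{\tp}\ltn y\rtn_{p{\rm -var}}$ (Cauchy--Schwarz inside the sup, using $p=2q$), and $\ltn y\rtn_{p{\rm -var}}$ is \emph{linear} in $\|y'_{\min I}\|+\ltn y,y'\rtn_{x,\tp}$ with coefficients built only from $|I|^\sigma$ and $\ltn x\rtn_{\tp}$. That yields exactly the bracket $1+\|y'\|_\infty(|I|^\sigma+\tfrac32\ltn x\rtn_{\tp}+\tfrac12\ltn x\rtn_{\tp}^2)$ after using $|I|\le 1$.

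Your version produces $\tfrac12 C_g|I|^\sigma\ltn y\rtn_{\tp}^2$ with $\ltn y\rtn_{\tp}\le\|y'\|_\infty\ltn x\rtn_{\tp}+\ltn R^y\rtn_{\tq}$, and expanding the square gives a $\ltn R^y\rtn_{\tq}^2$ term. That term is genuinely \emph{quadratic} in $\ltn y,y'\rtn_{x,\tp}$ and carries no $\|y'\|_\infty$ or $\ltn x\rtn_{\tp}$ factor, so "peeling off one power of $\|y'\|_\infty$" cannot relinearize it: the only way to write $\ltn R^y\rtn_{\tq}^2$ as coefficient times $(\|y'_{\min I}\|+\ltn y,y'\rtn_{x,\tp})$ forces a coefficient of size $\ltn y,y'\rtn_{x,\tp}$ itself, which is not of the admissible form in the bracket of \eqref{zroughest}. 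So the estimate you would actually obtain is $\ltn z,z'\rtn_{x,\tp,I}\le(\text{linear part})+\tfrac12 C_g|I|^\sigma\ltn y,y'\rtn_{x,\tp,I}^2$, which is strictly weaker than the claimed linear bound and would break the downstream fixed-point/greedy-time arguments. Both of your side facts ($\ltn\|y_{\cdot,\cdot}\|^2\rtn_{\tq}\le|I|^\sigma\ltn y\rtn_{\tp}^2$ and $\ltn R^y\rtn_{\tp}\le\ltn R^y\rtn_{\tq}$) are correct, but the decomposition has to keep $y'_s x_{s,t}$, not all of $y_{s,t}$, inside the Lipschitz-difference piece of $R^z$.
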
	
\begin{proof}
	Since
	\begin{eqnarray*}
	g(y_t) - g(y_s) &=& \int_0^1 Dg(y_s + \eta y_{s,t})y_{s,t}d\eta =  \int_0^1 Dg(y_s + \eta y_{s,t})(y^\prime_s x_{s,t} + R^y_{s,t})d\eta\\
	&=& Dg(y_s)y^\prime_s x_{s,t} + \int_0^1 Dg(y_s + \eta y_{s,t})  R^y_{s,t} d\eta + \int_0^1 [Dg(y_s + \eta y_{s,t}) - Dg(y_s)] y^\prime_s x_{s,t} d\eta,
	\end{eqnarray*}
it follows that $z^\prime_s = Dg(y_s)y^\prime_s$ and
\begin{eqnarray*}
\|R^z_{s,t} \| \leq C_g \|R^y_{s,t}\| + \int_0^1 C_g \eta \|y_{s,t}\| \|y^\prime\|_{\infty,I} \|x_{s,t}\| d\eta,
\end{eqnarray*}
so that
\[
\ltn R^z \rtn_{\tq,I} \leq C_g \ltn R^y \rtn_{\tq,I} + \frac{1}{2}C_g \|y^\prime\|_{\infty,I} \ltn x\rtn_{\tp,I} \ltn y \rtn_{p{\rm -var},I},
\]
where 
\begin{eqnarray*}
\ltn y \rtn_{p{\rm -var},I} &\leq& \|y^\prime\|_{\infty,I}\ltn x \rtn_{p{\rm -var},I} + |I|^\sigma \ltn R^y \rtn_{\tq,I} \\
&\leq& \Big(\|y^\prime_{\min{I}}\| +  \ltn y^\prime \rtn_{p{\rm -var},I} \Big) |I|^\sigma \ltn x \rtn_{\tp,I} + |I|^\sigma \ltn R^y \rtn_{\tq,I}\\
&\leq& (|I|^\sigma +\ltn x \rtn_{\tp,I}) \Big( \|y^\prime_{\min{I}}\| + \ltn y,y^\prime \rtn_{x,\tp,I}\Big).
\end{eqnarray*}
On the other hand,
\[
\ltn z^\prime \rtn_{p{\rm -var},I} \leq \ltn Dg(y) y^\prime \rtn_{p{\rm -var},I} \leq C_g \ltn y^\prime \rtn_{p{\rm -var},I} + C_g \| y^\prime\|_{\infty,I} \ltn y \rtn_{p{\rm -var},I}.
\]
Hence given $|I| \leq 1$ we get
\allowdisplaybreaks
\begin{eqnarray*}
&&\ltn z,z^\prime \rtn_{x,2\alpha,I} \\
&\leq& C_g \ltn y^\prime \rtn_{p{\rm -var},I} + C_g \| y^\prime\|_{\infty,I} \ltn y \rtn_{p{\rm -var},I} + C_g \ltn R^y \rtn_{\tq,I} + \frac{1}{2}C_g \|y^\prime\|_{\infty,I} \ltn x\rtn_{\tp,I} \ltn y \rtn_{p{\rm -var},I}\notag\\
&\leq& C_g \ltn y,y^\prime \rtn_{x,\tp,I} + C_g\|y^\prime\|_{\infty,I}\Big(1 + \frac{1}{2}\ltn x \rtn_{\tp,I}\Big)(|I|^\sigma +\ltn x \rtn_{\tp,I}) \Big( \|y^\prime_{\min{I}}\| + \ltn y,y^\prime \rtn_{x,\tp,I}\Big)
\end{eqnarray*}
which follows \eqref{zroughest} due to $|I|\leq 1$.
\end{proof}	
	
\begin{lemma}\label{roughfunctiondiff}
	Assume that $g \in C^3_b$ with coefficient $C_g$ and $z_u = g(\bar{y}_u)-g(y_u)$. Then $(z,z^\prime) \in \cD^{2\beta}_x(I)$ with $z^\prime_s = Dg(\bar{y}_s)\bar{y}^\prime_s - Dg(y_s) y^\prime_s$. In addition for any $|I|\leq 1$ we get
	\begin{eqnarray}\label{zroughdiffest}
&&	\ltn z, z^\prime \rtn_{x,\tp,I}\notag\\
&\leq& 4C_g(1 + \ltn x \rtn_{\tp,I} + |I|^\sigma)^2\Big[ \|\bar{y}_{\min{I}}\|+\|\bar{y}^\prime_{\min{I}}\| + \|y_{\min{I}}\| +\|y^\prime_{\min{I}}\| + \ltn \bar{y},\bar{y}^\prime \rtn_{x,\tp,I} + \ltn y,y^\prime \rtn_{x,\tp,I}\Big] \times \notag \\
	&&\times \Big(\|\bar{y}_{\min{I}}-y_{\min{I}}\|+\|\bar{y}^\prime_{\min{I}}-y^\prime_{\min{I}}\| + \ltn \bar{y}-y, \bar{y}^\prime - y^\prime \rtn_{x,\tp,I}\Big).
	\end{eqnarray}
\end{lemma}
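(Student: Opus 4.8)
The idea is to regard $z=g(\bar y)-g(y)$ as the \emph{difference} of two controlled paths each already analysed in Lemma~\ref{roughfunction}. By that lemma, $(g(\bar y),Dg(\bar y)\bar y^\prime)$ and $(g(y),Dg(y)y^\prime)$ lie in $\cD^{2\beta}_x(I)$ with remainders $R^{g(\bar y)},R^{g(y)}$, so subtracting the two decompositions $g(\bar y)_{s,t}-g(y)_{s,t}=[Dg(\bar y_s)\bar y^\prime_s-Dg(y_s)y^\prime_s]\,x_{s,t}+[R^{g(\bar y)}_{s,t}-R^{g(y)}_{s,t}]$ gives at once $(z,z^\prime)\in\cD^{2\beta}_x(I)$ with $z^\prime_s=Dg(\bar y_s)\bar y^\prime_s-Dg(y_s)y^\prime_s$ and $R^z=R^{g(\bar y)}-R^{g(y)}$. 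It then remains to estimate $\ltn z,z^\prime\rtn_{x,\tp,I}=\ltn z^\prime\rtn_{p{\rm -var},I}+\ltn R^z\rtn_{\tq,I}$ by the right-hand side of \eqref{zroughdiffest}.

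\textbf{Bounding $\ltn z^\prime\rtn_{p{\rm -var},I}$.} Write $z^\prime=(Dg(\bar y)-Dg(y))\bar y^\prime+Dg(y)(\bar y^\prime-y^\prime)$ and use the elementary product rule $\ltn ab\rtn_{p{\rm -var}}\le\|a\|_\infty\ltn b\rtn_{p{\rm -var}}+\ltn a\rtn_{p{\rm -var}}\|b\|_\infty$ together with $\|D^kg\|_\infty\le C_g$ for $k=1,2,3$ and $\|Dg(\bar y)-Dg(y)\|_\infty\le C_g\|\bar y-y\|_\infty$. The only term requiring a Taylor computation is $\ltn Dg(\bar y)-Dg(y)\rtn_{p{\rm -var},I}$: as in the proof of Lemma~\ref{roughfunction}, write $Dg(\bar y_t)-Dg(\bar y_s)-(Dg(y_t)-Dg(y_s))=\int_0^1[D^2g(\bar y_s+\eta\bar y_{s,t})\bar y_{s,t}-D^2g(y_s+\eta y_{s,t})y_{s,t}]\,d\eta$ and insert the intermediate term $D^2g(\bar y_s+\eta\bar y_{s,t})y_{s,t}$; the first piece is controlled by $C_g\|\bar y-y\|_\infty\ltn\bar y\rtn_{p{\rm -var}}$ (Lipschitzness of $D^2g$), the second by $C_g\ltn\bar y-y\rtn_{p{\rm -var}}$. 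Throughout, a $p$-variation norm of $\bar y$, $y$ or $\bar y-y$ is turned into the corresponding controlled-path norm via the estimate $\ltn v\rtn_{p{\rm -var},I}\le(|I|^\sigma+\ltn x\rtn_{\tp,I})(\|v^\prime_{\min{I}}\|+\ltn v,v^\prime\rtn_{x,\tp,I})$ already used in \eqref{zalpha} and in Lemma~\ref{roughfunction}, and $\|v\|_{\infty,I}\le\|v_{\min{I}}\|+\ltn v\rtn_{p{\rm -var},I}$; the controlled-path norm of $\bar y-y$ is exactly the last factor in \eqref{zroughdiffest}.

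\textbf{Bounding $\ltn R^z\rtn_{\tq,I}$ and assembling.} Start from the explicit formula derived in the proof of Lemma~\ref{roughfunction}, namely $R^{g(\bar y)}_{s,t}=\int_0^1 Dg(\bar y_s+\eta\bar y_{s,t})R^{\bar y}_{s,t}\,d\eta+\int_0^1[Dg(\bar y_s+\eta\bar y_{s,t})-Dg(\bar y_s)]\bar y^\prime_s x_{s,t}\,d\eta$, subtract the analogous identity for $y$, and in each of the two resulting differences insert an intermediate term so that every summand is a product of one factor measuring the distance between $(\bar y,\bar y^\prime)$ and $(y,y^\prime)$ — one of $\|\bar y_{s,t}-y_{s,t}\|$, $\|R^{\bar y}_{s,t}-R^y_{s,t}\|$, $\|\bar y^\prime_s-y^\prime_s\|$ — and one factor measuring the size of the data; the Taylor remainders of $Dg$ and $D^2g$ supply the extra $\|x_{s,t}\|$-factors that, via $\|x_{s,t}\|\le|t-s|^\sigma\ltn x\rtn_{\tp,I}$, furnish the $|t-s|^\sigma$ needed to pass from $q$-variation to the $(\tq)$-seminorm. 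Taking suprema over partitions, using that $\ltn R^\bullet\rtn_{\tq}$ and $\ltn x\rtn_{\tp}$ are controls, $|I|\le1$, and collecting constants (each $\le 4C_g$) and powers of $1+\ltn x\rtn_{\tp,I}+|I|^\sigma$ (which never exceed the square once $|I|\le1$ is used to merge $|I|^\sigma$, $|I|^{2\sigma}$, etc.) yields the stated bound. The main obstacle here is purely organisational: maintaining the bookkeeping so that every term carries exactly one ``distance'' factor and one ``size'' factor, and so that the aggregate polynomial in $\ltn x\rtn_{\tp}$ stays quadratic; there is no analytic difficulty beyond what Lemmas~\ref{roughintegral} and \ref{roughfunction} already contain.
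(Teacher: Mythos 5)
Your proposal follows essentially the same route as the paper's proof: both start from the first-order Taylor/integral expansion of $g(\cdot)_{s,t}$ that already appears in Lemma~\ref{roughfunction}, subtract the two copies for $\bar y$ and $y$, insert intermediate terms so that each summand carries exactly one difference factor (from $\|\bar y-y\|_\infty$, $\ltn\bar y-y\rtn_{p\text{-var}}$, $\|\bar y'-y'\|_\infty$, $\ltn\bar y'-y'\rtn_{p\text{-var}}$ or $\ltn R^{\bar y}-R^y\rtn_{\tq}$) and one size factor, and then convert the $p$-variation and sup-norm bounds into controlled-path seminorms via $\ltn v\rtn_{p\text{-var},I}\le(|I|^\sigma+\ltn x\rtn_{\tp,I})(\|v'_{\min I}\|+\ltn v,v'\rtn_{x,\tp,I})$. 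The only cosmetic difference is that you explicitly frame $z$ as the difference of two paths already handled in Lemma~\ref{roughfunction}, whereas the paper redoes the Taylor expansion directly on $z_{s,t}$ and $z'_{s,t}$; the underlying computation and bookkeeping are identical.
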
	

\begin{proof}
	A direct computation shows that
		\allowdisplaybreaks
	\begin{eqnarray*}
	z_{s,t} &=& g(\bar{y}_t) - g(y_t) - g(\bar{y}_s) + g(y_s) \\
	&=& \int_0^1 \Big[ Dg(\bar{y}_s + \eta \bar{y}_{s,t}) \bar{y}_{s,t} - Dg(y_s + \eta y_{s,t})y_{s,t} \Big] d\eta \\
	&=& \int_0^1 \Big[ Dg(\bar{y}_s + \eta \bar{y}_{s,t}) R^{\bar{y}}_{s,t} - Dg(y_s + \eta y_{s,t})R^y_{s,t} \Big] d\eta \\
	&&+ \int_0^1 \Big[ Dg(\bar{y}_s + \eta \bar{y}_{s,t}) \bar{y}^\prime_s - Dg(y_s + \eta y_{s,t})y^\prime_s \Big] x_{s,t}d\eta \\
	&=& \int_0^1 \Big[ Dg(\bar{y}_s + \eta \bar{y}_{s,t}) R^{\bar{y}}_{s,t} - Dg(y_s + \eta y_{s,t})R^y_{s,t} \Big] d\eta + [Dg(\bar{y}_s)\bar{y}^\prime_s - Dg(y_s) y^\prime_s] x_{s,t}\\
	&&+ \int_0^1 \int_0^1 \Big[ D^2g(\bar{y}_s + \eta \eta_1 \bar{y}_{s,t})[\bar{y}_{s,t},\bar{y}^\prime_{s}] - D^2g (y_s + \eta \eta_1 y_{s,t})[y_{s,t},y^\prime_s]\Big] \eta d \eta_1 x_{s,t}d\eta.
	\end{eqnarray*}
This shows that $z^\prime_s = Dg(\bar{y}_s)\bar{y}^\prime_s - Dg(y_s) y^\prime_s$ and 
\begin{eqnarray*}
z^\prime_{s,t} &=& Dg(\bar{y}_t)\bar{y}_t^\prime - Dg(y_t)y_t^\prime - Dg(\bar{y}_s)\bar{y}_s^\prime + Dg(y_s)y_s^\prime \\
&=& [Dg(\bar{y}_t)-Dg(\bar{y}_s)]\bar{y}^\prime_t - [Dg(y_t)-Dg(y_s)]y^\prime_t + [Dg(\bar{y}_s)-Dg(y_s)]\bar{y}^\prime_{s,t}- Dg(y_s)[\bar{y}_{s,t}-y^\prime_{s,t}]\\
&=& \int_0^1 \Big[D^2g(\bar{y}_s + \eta \bar{y}_{s,t})[\bar{y}_{s,t},\bar{y}^\prime_t] - D^2g(y_s + \eta y_{s,t})[y_{s,t},y^\prime_t] \Big]d\eta\\
&&+ [Dg(\bar{y}_s)-Dg(y_s)]\bar{y}^\prime_{s,t}- Dg(y_s)[\bar{y}_{s,t}-y^\prime_{s,t}];
\end{eqnarray*}
which implies that
\begin{eqnarray}
\ltn z^\prime \rtn_{p{\rm -var}} &\leq& C_g \Big[\|\bar{y}-y\|_{\infty} + \|\bar{y}_{\cdot,\cdot} - y_{\cdot,\cdot}\|_{\infty} \Big] \ltn \bar{y} \rtn_{p{\rm -var}} \|\bar{y}^\prime\|_\infty + C_g \ltn \bar{y} - y \rtn_{p{\rm -var}} \|\bar{y}^\prime\|_\infty \notag\\
&&+ C_g \ltn y \rtn_{p{\rm -var}} \|\bar{y}^\prime - y^\prime\|_\infty + C_g \|\bar{y}-y\|_\infty \ltn \bar{y}^\prime \rtn_{p{\rm -var}} + C_g \ltn \bar{y}^\prime - y^\prime \rtn_{p{\rm -var}}.
\end{eqnarray}
A similar estimate shows that
\begin{eqnarray*}
\ltn R^z \rtn_{\tq} &\leq& C_g \ltn R^{\bar{y}} \rtn_{\tq} \Big[\|\bar{y}-y\|_\infty + \|\bar{y}_{\cdot,\cdot}-y_{\cdot,\cdot}\|_\infty \Big] + C_g \ltn R^{\bar{y}} - R^y \rtn_{\tq} \\
&&+ \ltn x \rtn_{\tp} \Big\{C_g \|\bar{y}^\prime\|_\infty \ltn \bar{y} \rtn_{p{\rm -var}} \Big[\|\bar{y}-y\|_\infty + \|\bar{y}_{\cdot,\cdot} - y_{\cdot,\cdot}\|_\infty\Big] \\
&& \qquad \qquad \qquad + C_g \|\bar{y}^\prime\|_\infty \ltn \bar{y} - y\rtn_{p{\rm -var}} + C_g \ltn y \rtn_{p{\rm -var}} \|\bar{y}^\prime - y^\prime \|_\infty\Big\}.
\end{eqnarray*}
Therefore 
\begin{eqnarray}\label{zzprime}
&&\ltn z,z^\prime \rtn_{x,\tp}\\
&\leq& \Big\{\|\bar{y}-y\|_\infty + \|\bar{y}_{\cdot,\cdot} - y_{\cdot,\cdot}\|_\infty + \ltn \bar{y} - y \rtn_{p{\rm -var}} + \|\bar{y}^\prime - y^\prime\|_\infty + \ltn \bar{y}^\prime - y^\prime \rtn_{p{\rm -var}} + \ltn R^{\bar{y}} - R^y \rtn_{\tq}\Big\} \times \notag\\
&\times& \Big\{(C_g  \|\bar{y}^\prime\|_\infty \ltn \bar{y} \rtn_{p{\rm -var}} + C_g \ltn y \rtn_{p{\rm -var}})(1+\ltn x \rtn_{\tp}) + C_g +C_g  \|\bar{y}^\prime\|_\infty (1+ \ltn x \rtn_{\tp}) + C_g \ltn \bar{y},\bar{y}^\prime \rtn_{x,\tp} \Big \}.\notag
\end{eqnarray}
Note that
\begin{eqnarray*}
\|\bar{y}-y\|_\infty &\leq& \|\bar{y}_{\min{I}} -y_{\min{I}}\| + \|\bar{y}^\prime_{\min{I}} -y^\prime_{\min{I}}\| |I|^\sigma \ltn x\rtn_{\tp}  + (|I|^\sigma + \ltn x \rtn_{\tp}) \ltn \bar{y}-y,\bar{y}^\prime - y^\prime \rtn_{x,\tp}; \\
\|\bar{y}_{\cdot,\cdot} - y_{\cdot,\cdot}\|_\infty &\leq& \|\bar{y}^\prime_{\min{I}} -y^\prime_{\min{I}}\| |I|^\sigma \ltn x \rtn_{\tp} + (|I|^\sigma + \ltn x \rtn_{\tp}) \ltn \bar{y}-y,\bar{y}^\prime - y^\prime \rtn_{x,\tp};\\
\ltn \bar{y} - y\rtn_{p{\rm -var}} &\leq& |I|^\sigma \ltn x \rtn_{\tp} \|\bar{y}^\prime_{\min{I}} -y^\prime_{\min{I}}\|  + (|I|^\sigma + \ltn x \rtn_{\tp}) \ltn \bar{y}-y,\bar{y}^\prime - y^\prime \rtn_{x,\tp};\\
\|\bar{y}^\prime - y^\prime\|_\infty &\leq& \|\bar{y}^\prime_{\min{I}} -y^\prime_{\min{I}}\|  + \ltn \bar{y}-y,\bar{y}^\prime - y^\prime \rtn_{x,\tp};\\
\ltn \bar{y} \rtn_{p{\rm -var}} &\leq & \|\bar{y}_{\min{I}}\| +  \|\bar{y}^\prime_{\min{I}}\| |I|^\sigma \ltn x \rtn_{\tp}+ (|I|^\sigma + \ltn x\rtn_{\tp}) \ltn \bar{y},\bar{y}^\prime \rtn_{x,\tp}; \\
\ltn y \rtn_{p{\rm -var}} &\leq & \|y_{\min{I}}\| + \|y^\prime_{\min{I}}\| |I|^\sigma \ltn x\rtn_{\tp}+ (|I|^\sigma + \ltn x\rtn_{\tp}) \ltn y,y^\prime \rtn_{x,\tp}.
\end{eqnarray*}
Replacing all the above estimates into \eqref{zzprime}, we get \eqref{zroughdiffest}.
\end{proof}		

\subsection{Proofs of auxilliary propositions}

\begin{proof}[{\bf Proposition \ref{angleome}}]
	 We are going to estimate the H\"older norm of $\theta$ using equation \eqref{RDEangle0}. Consider the solution mapping $\cM: \cD^{2\alpha}_x(\theta_a,Q(x_a,\theta_a)) \to \cD^{2\alpha}_x(\theta_a,Q(x_a,\theta_a)) $ defined by
	\[
	\cM (\theta,\theta^\prime)_t = (F(\theta,\theta^\prime)_t, Q(y_t,\theta_t)),
	\]
	where $F$ is defined as the right hand side of \eqref{RDEangle0}, together with the seminorm
	\begin{eqnarray*}
		\ltn (\theta,\theta^\prime)\rtn_{x,p} = \ltn \theta^\prime \rtn_{p{\rm -var}} + \ltn R^\theta \rtn_{\frac{p}{2}{\rm -var}},\quad
		\ltn \cM (\theta,\theta^\prime)\rtn_{x,2\alpha} = \ltn Q(y,\theta) \rtn_{p{\rm -var}} + \ltn R^{F(\theta,\theta^\prime)} \rtn_{\frac{p}{2}{\rm -var}}.
	\end{eqnarray*}
	We are going to estimate these seminorms. From $\theta^\prime = Q(y,\theta)$ and $y^\prime = g(y)$, it follows that
	\allowdisplaybreaks
	\begin{eqnarray}
	\ltn \theta\rtn_{p{\rm -var}} &\leq&\ltn \theta^\prime \rtn_\infty \ltn x \rtn_{p{\rm -var}} + \ltn R^\theta\rtn_{\frac{p}{2}{\rm -var}}\leq 2C_g \ltn x \rtn_{p{\rm -var}} + \ltn \theta,\theta^\prime \rtn_{x,p};\label{yalpha1}\\	
	\ltn y\rtn_{p{\rm -var}} &\leq&\ltn y^\prime \rtn_\infty \ltn x \rtn_{p{\rm -var}} + \ltn R^y\rtn_{\frac{p}{2}{\rm -var}} \leq C_g \ltn x \rtn_{p{\rm -var}} + \ltn y,y^\prime \rtn_{x,p};\label{xalpha1}\\	
	\ltn Q(y,\theta) \rtn_{p{\rm -var}} &\leq& 2 \ltn G(y,\theta) \rtn_{p{\rm -var}} + 2 C_g \ltn \theta \rtn_{p{\rm -var}} \leq 4 C_g \ltn \theta \rtn_{p{\rm -var}} + 2 C_g \ltn y \rtn_{p{\rm -var}}; \label{gyalpha1}\\
	\|Q(y,\theta)\|_\infty &\leq& 2 \|G(y,\theta)\|_\infty \leq 2 C_g;\notag \\
	\|H(y,\theta)\|_\infty &\leq& 3 \|G(y,\theta)\|_\infty^2 \leq 12 C_g^2; \notag \\
	\ltn H(y,\theta)\rtn_{p{\rm -var}} &\leq& 6 \|G(y,\theta)\|_\infty^2 \ltn \theta \rtn_{p{\rm -var}} + 6 \|G(y,\theta)\|_\infty \ltn G(y,\theta)\rtn_{p{\rm -var}} \leq 6C_g^2\Big( 2\ltn \theta \rtn_{p{\rm -var}} +   \ltn y \rtn_{p{\rm -var}}\Big).\notag 
	\end{eqnarray}
	Meanwhile
	\begin{eqnarray*}
		[Q(y,\theta)]^\prime_s &=& \frac{\partial Q}{\partial y}(y_s,\theta_s)y^\prime_s + \frac{\partial Q}{\partial \theta}(y_s,\theta_s)\theta^\prime_s\\
		&=& \frac{\partial G}{\partial y}(y_s,\theta_s)y^\prime_s - \langle \theta_s, \frac{\partial G}{\partial y}(y_s,\theta_s)y^\prime_s \rangle \theta_s + \frac{\partial G}{\partial \theta}(y_s,\theta_s)\theta^\prime_s\\
		&&- \langle \theta^\prime_s, G(y_s,\theta_s) \rangle \theta_s - \langle \theta_s, G(y_s,\theta_s) \rangle \theta^\prime_s - \langle \theta_s, \frac{\partial G}{\partial \theta}(y_s,\theta_s)\theta^\prime_s \rangle \theta_s,
	\end{eqnarray*}
	where 
	\[
	\frac{\partial G}{\partial y} = \int_0^1 D_{yy}g(\eta y) \theta \eta d\eta, \quad  \frac{\partial G}{\partial \theta} = \int_0^1 D_yg(\eta y) d\eta;
	\]
	which, together with $\theta^\prime = Q(y,\theta)$ and $y^\prime = g(y)$, derive
	\allowdisplaybreaks
	\begin{eqnarray*}
		&&\ltn Q(y,\theta)^\prime \rtn_{p{\rm -var}} \\
		&\leq& C_g \Big(\ltn y \rtn_{p{\rm -var}} \|y^\prime\|_\infty + \ltn \theta \rtn_{p{\rm -var}} \|y^\prime\|_\infty + \ltn y^\prime \rtn_{p{\rm -var}}\Big) \\
		&&+ C_g \Big( 2\|y^\prime\|_\infty \ltn \theta \rtn_{p{\rm -var}} + \ltn y \rtn_{p{\rm -var}} \|y^\prime\|_\infty + \ltn \theta \rtn_{p{\rm -var}} \|y^\prime\|_\infty + \ltn y^\prime \rtn_{p{\rm -var}}\Big)\\
		&& + 2C_g\Big( \ltn \theta^\prime \rtn_{p{\rm -var}} + \|\theta^\prime\|_\infty \ltn \theta \rtn_{p{\rm -var}} + 2 \|\theta^\prime\|_\infty \ltn \theta\rtn_{p{\rm -var}} + 2 \|\theta^\prime\|_\infty \ltn y\rtn_{p{\rm -var}}\Big)\\
		&&+ C_g \Big(\ltn y\rtn_{p{\rm -var}} \|\theta^\prime\|_\infty + \ltn \theta^\prime\rtn_{p{\rm -var}} \Big) + C_g\Big(2 \|\theta^\prime\|_\infty \ltn \theta \rtn_{p{\rm -var}} + \ltn \theta^\prime \rtn_{p{\rm -var}} + \|\theta^\prime\|_\infty \ltn y \rtn_{p{\rm -var}} \Big)\\
		&\leq& 2C_g \Big( \ltn y \rtn_{p{\rm -var}} \|y^\prime\|_\infty +  \ltn y^\prime \rtn_{p{\rm -var}} + 2 \ltn \theta \rtn_{p{\rm -var}} \|y^\prime\|_\infty + 2 \ltn \theta^\prime\rtn_{p{\rm -var}} \\
		&&+ 3 \ltn y\rtn_{p{\rm -var}} \|\theta^\prime\|_\infty +  4 \|\theta^\prime\|_\infty \ltn \theta \rtn_{p{\rm -var}} \Big)\\
		&\leq& 2C_g \Big( C_g\ltn y \rtn_{p{\rm -var}} +  \ltn g(y) \rtn_{p{\rm -var}} + 2C_g \ltn \theta \rtn_{p{\rm -var}}   + 2 \ltn Q(y,\theta)\rtn_{p{\rm -var}} \\
		&& + 6 C_g \ltn y\rtn_{p{\rm -var}} +  8C_g \ltn \theta \rtn_{p{\rm -var}} \Big)\\
		&\leq& 12 C_g^2 \Big(2 \ltn y\rtn_{p{\rm -var}} + 3 \ltn \theta \rtn_{p{\rm -var}} \Big)
	\end{eqnarray*}
	and 
	\[
	\| Q(y,\theta)^\prime \|_\infty \leq 2C_g \Big(\|y^\prime\|_\infty +2\|\theta^\prime\|_\infty \Big) \leq 2 C_g \Big(C_g + 4C_g \Big) \leq 10 C_g^2.
	\]
	Hence by using H\"older inequality, we get
	\begin{eqnarray}\label{RF}
	&&\|R^{F(\theta,\theta^\prime)}_{s,t}\| \notag\\
	& \leq& \int_s^t \|P(y_u,\theta_u)\|du  + \|[Q(y,\theta)]^\prime_s\| \|\X_{s,t}\| +\|H(y,\theta)\|_\infty \|[x]_{s,t}\| + K_\alpha \ltn H(y,\theta) \rtn_{p{\rm -var}} \ltn [x] \rtn_{\frac{p}{2}{\rm -var}}\notag\\
	&&+ C_\alpha \Big(\ltn x\rtn_{p{\rm -var}} \ltn R^{Q(y,\theta)}\rtn_{\frac{p}{2}{\rm -var}} + \ltn Q(y,\theta)^\prime \rtn_{p{\rm -var}} \ltn \X \rtn_{\frac{p}{2}{\rm -var}}\Big) \notag\\
	&\leq& 2(C_f + \|A\|)(t-s) + 10 C_g^2 \|\X_{s,t}\|+ 12C_g^2 \|[x]_{s,t}\| \notag\\
	&& + 6K_\alpha C_g^2 \ltn [x] \rtn_{\frac{p}{2}{\rm -var}} \Big(2\ltn \theta \rtn_{p{\rm -var}} +   \ltn y \rtn_{p{\rm -var}} \Big) \notag\\
	&&+ C_\alpha \Big(\ltn x\rtn_{p{\rm -var}} \ltn R^{Q(y,\theta)}\rtn_{\frac{p}{2}{\rm -var}} + 12 C_g^2 (2 \ltn y\rtn_{p{\rm -var}} + 3 \ltn \theta \rtn_{p{\rm -var}} ) \ltn \X \rtn_{\frac{p}{2}{\rm -var}}\Big),
	\end{eqnarray}
	On the other hand
	\begin{eqnarray*}
		\|R^{Q(y,\theta)}_{s,t}\| &\leq& \left\|Q(y_t,\theta_t) - Q(y_s,\theta_t) - \frac{\partial Q}{\partial y} (y_s,\theta_s) y^\prime_s x_{s,t} \right\| \\
		&& +\left\| Q(y_s,\theta_t) - Q(y_s,\theta_s) - \frac{\partial Q}{\partial \theta} (y_s,\theta_s) \theta^\prime_s x_{s,t}\right \|\\
		&\leq&\int_0^1 \Big\|\frac{\partial Q}{\partial y}\Big(y_s + \eta (y_t-y_s),\theta_t\Big) -  \frac{\partial Q}{\partial y}(y_s,\theta_t)\Big\| \|y^\prime_s\| \|x_{s,t}\|d\eta  \\
		&& + \int_0^1 \Big\|\frac{\partial Q}{\partial \theta}\Big(x_s,\theta_s + \eta (\theta_t-\theta_s)\Big) -  \frac{\partial Q}{\partial x}(x_s,\theta_s)\Big\| \|\theta^\prime_s\| \|x_{s,t}\|d\eta \\
		&& + \int_0^1 \Big\|\frac{\partial Q}{\partial y}\Big(y_s + \eta (y_t-y_s),\theta_t\Big) \Big\| d \eta\ \|R^y_{s,t}\| + \int_0^1 \Big\|\frac{\partial Q}{\partial \theta}\Big(y_s,\theta_s + \eta (\theta_t-\theta_s)\Big) \Big\| d \eta\ \|R^\theta_{s,t}\|, 
	\end{eqnarray*}
	thus
	\begin{eqnarray*}
		&&\ltn R^{Q(y,\theta)}\rtn_{\frac{p}{2}{\rm -var}} \\
		&\leq&  C_g \ltn R^y\rtn_{\frac{p}{2}{\rm -var}} + C_g \|y^\prime\|_\infty \ltn x \rtn_{p{\rm -var}}  \ltn y\rtn_{p{\rm -var}}  + C_g \ltn R^\theta\rtn_{\frac{p}{2}{\rm -var}} + C_g \|\theta^\prime\|_\infty \ltn x \rtn_{p{\rm -var}}  \ltn \theta\rtn_{p{\rm -var}}  \\
		&\leq& C_g \Big( \ltn R^y\rtn_{\frac{p}{2}{\rm -var}}  +  \ltn R^\theta\rtn_{\frac{p}{2}{\rm -var}}\Big) + C_g^2 \ltn y \rtn_{p{\rm -var}}  \ltn x\rtn_{p{\rm -var}}  +  2C_g^2 \ltn x \rtn_{p{\rm -var}}  \ltn \theta\rtn_{p{\rm -var}}  \\
		&\leq& C_g \Big( \ltn R^y\rtn_{\frac{p}{2}{\rm -var}}  +  \ltn R^\theta\rtn_{\frac{p}{2}{\rm -var}}\Big)  + C_g^2 \ltn x \rtn_{p{\rm -var}}  \Big(\ltn y\rtn_{p{\rm -var}}  + 2 \ltn \theta\rtn_{p{\rm -var}} \Big).
	\end{eqnarray*}
	Combining these above estimates into \eqref{RF} and using \eqref{gyalpha1}, we get
	\allowdisplaybreaks
	\begin{eqnarray*}
		&&\ltn R^{F(\theta,\theta^\prime)}\rtn_{\frac{p}{2}{\rm -var}} + \ltn Q(y,\theta) \rtn_{p{\rm -var}} \\
		&\leq& 2(C_f + \|A\|)(T-a)+10 C_g^2 \ltn \X\rtn_{\frac{p}{2}{\rm -var}}+ 12C_g^2 \ltn [x]\rtn_{\frac{p}{2}{\rm -var}} \notag\\
		&& + 6K_\alpha C_g^2 \ltn [x] \rtn_{\frac{p}{2}{\rm -var}}\Big(2\ltn \theta \rtn_{p{\rm -var}} +   \ltn y \rtn_{p{\rm -var}} \Big) + 2C_g \Big(\ltn y \rtn_{p{\rm -var}} + 2\ltn \theta\rtn_{p{\rm -var}} \Big) \notag\\
		&&+ C_\alpha \Big\{C_g \ltn x\rtn_{p{\rm -var}}  \Big( \ltn R^y\rtn_{\frac{p}{2}{\rm -var}}  +  \ltn R^\theta\rtn_{\frac{p}{2}{\rm -var}}\Big)  + C_g^2 \ltn x \rtn_{p{\rm -var}}^2  \Big(\ltn y\rtn_{p{\rm -var}} + 2 \ltn \theta\rtn_{p{\rm -var}}\Big) \\
		&&\qquad \qquad \qquad \qquad + 12 C_g^2 \Big(2 \ltn y\rtn_{p{\rm -var}} + 3 \ltn \theta \rtn_{p{\rm -var}} \Big) \ltn \X \rtn_{\frac{p}{2}{\rm -var}}\Big\}\\
		&\leq& 2(C_f + \|A\|)(T-a)+10 C_g^2 \ltn \X\rtn_{\frac{p}{2}{\rm -var}}+ 12C_g^2 \ltn [x]\rtn_{\frac{p}{2}{\rm -var}} \notag\\
		&&+ \Big(2 C_g + 6 K_\alpha C_g^2 \ltn [x] \rtn_{\frac{p}{2}{\rm -var}} + C_\alpha C_g^2 \ltn x \rtn_{p{\rm -var}}^2 + 24 C_\alpha C_g^2  \ltn \X\rtn_{\frac{p}{2}{\rm -var}} \Big) \ltn y \rtn_{p{\rm -var}} \\
		&&+ \Big(4 C_g + 12 K_\alpha C_g^2  \ltn [x] \rtn_{\frac{p}{2}{\rm -var}} + 2 C_\alpha C_g^2  \ltn x \rtn_{p{\rm -var}}^2 + 36 C_\alpha C_g^2  \ltn \X\rtn_{\frac{p}{2}{\rm -var}} \Big) \ltn \theta \rtn_{p{\rm -var}} \\
		&& + C_\alpha C_g  \ltn y,y^\prime \rtn_{x,p} + C_\alpha C_g  \ltn \theta,\theta^\prime \rtn_{x,p}. 
	\end{eqnarray*}
	Using \eqref{yalpha1} and \eqref{xalpha1}, it follows that for any $a < T$ such that $T-a \leq1$ we get
	\begin{eqnarray*}
		&&\ltn R^{F(\theta,\theta^\prime)}\rtn_{\frac{p}{2}{\rm -var}} + \ltn G(y,\theta) \rtn_{p{\rm -var}} \\
		&\leq&2(C_f + \|A\|)(T-a)+10 C_g^2 \ltn \X\rtn_{\frac{p}{2}{\rm -var}}+ 12C_g^2 \ltn [x]\rtn_{\frac{p}{2}{\rm -var}} \notag\\
		&&+ \Big(2 C_g + 6 K_\alpha C_g^2 \ltn [x] \rtn_{\frac{p}{2}{\rm -var}} + C_\alpha C_g^2 \ltn x \rtn_{p{\rm -var}}^2 + 24 C_\alpha C_g^2 \ltn \X\rtn_{\frac{p}{2}{\rm -var}} \Big) \Big(C_g \ltn x \rtn_{p{\rm -var}} + \ltn y,y^\prime\rtn_{x,p} \Big) \\
		&&+ \Big(4 C_g + 12 K_\alpha C_g^2 \ltn [x] \rtn_{\frac{p}{2}{\rm -var}} + 2 C_\alpha C_g^2 \ltn x \rtn_{p{\rm -var}}^2 + 36 C_\alpha C_g^2 \ltn \X\rtn_{\frac{p}{2}{\rm -var}} \Big)\Big(2C_g \ltn x \rtn_{p{\rm -var}} + \ltn \theta,\theta^\prime\rtn_{x,p} \Big) \\
		&& + C_\alpha C_g \ltn y,y^\prime \rtn_{x,p} + C_\alpha C_g  \ltn \theta,\theta^\prime \rtn_{x,p}\\
		&\leq&  2(C_f + \|A\|)(T-a)+10 C_g^2 \ltn \X\rtn_{\frac{p}{2}{\rm -var}}+ 12C_g^2 \ltn [x]\rtn_{\frac{p}{2}{\rm -var}} \notag\\
		&&+ \Big(10 C_g + 30 K_\alpha C_g^2 \ltn [x] \rtn_{\frac{p}{2}{\rm -var}} + 5 C_\alpha C_g^2 \ltn x \rtn_{p{\rm -var}}^2  + 96 C_\alpha C_g^2 \ltn \X\rtn_{\frac{p}{2}{\rm -var}} \Big) C_g \ltn x \rtn_{p{\rm -var}} \\	
		&&+ \Big(2 C_g + 6 K_\alpha C_g^2 \ltn [x] \rtn_{\frac{p}{2}{\rm -var}} + C_\alpha C_g^2 \ltn x \rtn_{p{\rm -var}}^2 + 24 C_\alpha C_g^2 \ltn \X\rtn_{\frac{p}{2}{\rm -var}}\Big) \ltn y,y^\prime\rtn_{x,p}  \\
		&&+ \Big(4 C_g + 12 K_\alpha C_g^2 \ltn [x] \rtn_{\frac{p}{2}{\rm -var}} + 2 C_\alpha C_g^2 \ltn x \rtn_{p{\rm -var}}^2 + 36 C_\alpha C_g^2 \ltn \X\rtn_{\frac{p}{2}{\rm -var}} \Big) \ltn \theta,\theta^\prime\rtn_{x,p}. 
	\end{eqnarray*}
	Using \eqref{Mest},	it is easy to check that
	\begin{eqnarray*}
		&&\ltn R^{F(\theta,\theta^\prime)}\rtn_{\frac{p}{2}{\rm -var}} + \ltn G(y,\theta) \rtn_{p{\rm -var}} \\
		&\leq& 	M \Big\{T-a +  \ltn \X \rtn_{\frac{p}{2}{\rm -var}} + \ltn [x] \rtn_{\frac{p}{2}{\rm -var}} +\ltn x \rtn_{p{\rm -var}}\\
		&&+  \Big(\ltn \X \rtn_{\frac{p}{2}{\rm -var}} + \ltn x \rtn_{p{\rm -var}}^2 +  \ltn [x] \rtn_{\frac{p}{2}{\rm -var}}\Big) \ltn x \rtn_{p{\rm -var}} \Big\} \Big(1+ \ltn \theta,\theta^\prime \rtn_{x,p} + \ltn y,y^\prime \rtn_{x,p}\Big) \\
		&\leq & 	M \Big\{T-a +  \ltn \X \rtn_{\frac{p}{2}{\rm -var}} + \ltn [x] \rtn_{\frac{p}{2}{\rm -var}} +\ltn x \rtn_{p{\rm -var}} \\
		&&+  \Big(\ltn \X \rtn_{\frac{p}{2}{\rm -var}} + \ltn x \rtn_{p{\rm -var}}^2 +  \ltn [x] \rtn_{\frac{p}{2}{\rm -var}}\Big) \ltn x \rtn_{p{\rm -var}} \Big\}\Big(1+ \ltn y,y^\prime \rtn_{x,p}\Big) \Big(1+ \ltn \theta,\theta^\prime \rtn_{x,p} \Big). 
	\end{eqnarray*}
	Now construct for any fixed $\mu \in (0,1)$ a sequence of stopping times $\{\tau_k\}_{k \in \N}$ such that $\tau_0 = a$ and
	\begin{eqnarray}\label{stoppingtime}
	&&\tau_{k+1}-\tau_k +\ltn x \rtn_{p{\rm -var},[\tau_k,\tau_{k+1}]} +  \ltn \X \rtn_{\frac{p}{2}{\rm -var},\Delta^2([\tau_k,\tau_{k+1}])}+ \ltn [x] \rtn_{\frac{p}{2}{\rm -var},\Delta^2([\tau_k,\tau_{k+1}])} \notag\\ &&=\frac{\mu}{2M(1+\ltn y,y^\prime \rtn_{x,p,[a,b]} )}<1, 
	\end{eqnarray}
	for all $k \in \N$, then it follows that
	\[
	\ltn Q(y,\theta) \rtn_{p{\rm -var},[\tau_k,\tau_{k+1}]} + \ltn R^{F(\theta,\theta^\prime)}\rtn_{\frac{p}{2}{\rm -var},[\tau_k,\tau_{k+1}]} \leq \mu  + \mu  \ltn \theta,\theta^\prime \rtn_{x,p}. 
	\]
	Hence using the fact that $\theta^\prime = Q(y,\theta)$ and $F(\theta,\theta^\prime) = \theta$ we conclude that
	\begin{equation}
	\ltn (\theta,\theta^\prime) \rtn_{x,p,[\tau_k,\tau_{k+1}]} \leq  \frac{\mu }{1-\mu}. 
	\end{equation}
	Therefore by applying Lemma \ref{additive}, it follows that
	\begin{eqnarray*}
	\ltn (\theta,\theta^\prime) \rtn_{x,p,[a,b]}&\leq& \sum_{i=0}^{N-1} \Big(N^{\frac{p-1}{p}} \ltn \theta^\prime \rtn_{p{\rm -var},[\tau_i,\tau_{i+1}]} + N^{\frac{p-2}{p}} \ltn R^\theta \rtn_{\frac{p}{2}{\rm -var},[\tau_i,\tau_{i+1}]}\Big)\\
	&\leq& \frac{\mu}{1-\mu} N^{1+\frac{p-1}{p}} \leq \frac{\mu}{1-\mu} N^2, 
	\end{eqnarray*}
	where $N= N_{\frac{\mu}{2M(1+\ltn y,y^\prime \rtn_{x,p})},[a,b],p}(\bx)$ is the number of greedy times $\tau_k$ defined in \eqref{stoppingtime} in the interval $[a,b]$. It is easy to see that
	\begin{eqnarray*}
		b-a &>& N_{\frac{\mu}{2M(1+\ltn y,y^\prime \rtn_{x,p})},[a,b],p}(\bx)\times \\
		&&\times \Big\{ \frac{\mu}{2M(1+  \ltn y,y^\prime \rtn_{x,p,[a,b]})} \Big( 1 + \ltn x \rtn_{\nu,[a,b]} +  \ltn \X \rtn_{2\nu,\Delta^2([a,b])} +  \ltn [x] \rtn_{2\nu,\Delta^2([a,b])} \Big)^{-1} \Big\}^{\frac{1}{\nu}}.
	\end{eqnarray*}
	All in all, we have just shown that for all $0\leq a \leq b \leq 1$
	\begin{eqnarray*}\label{alphanorm0}
	&&\ltn (\theta,\theta^\prime) \rtn_{x,p,[a,b]} \notag\\
	&\leq&  \frac{(b-a)^2(2M)^{\frac{2}{\nu}}}{(1-\mu)\mu^{\frac{2}{\nu}-1}}  \Big\{\Big( 1 + \ltn x \rtn_{\nu,[a,b]} +  \ltn \X \rtn_{2\nu,[a,b]} +  \ltn [x]\rtn_{2\nu,[a,b]}\Big)\Big(1+ \ltn y,y^\prime \rtn_{x,p,[a,b]}\Big)\Big\}^{\frac{2}{\nu}}\notag \\
	&\leq&  \frac{(b-a)^2(8M)^{\frac{2}{\nu}}}{4(1-\mu)\mu^{\frac{2}{\nu}-1}} \Big[1+ \ltn y,y^\prime \rtn_{x,p,[a,b]}^\frac{2}{\nu}\Big]\Big[1+\Big(\ltn x \rtn_{\nu,[a,b]} +  \ltn \X \rtn_{2\nu,\Delta^2([a,b])} +  \ltn [x]\rtn_{2\nu,\Delta^2([a,b])} \Big)^\frac{2}{\nu} \Big] \notag \\
	&\leq&  \frac{(b-a)^2(8M)^{\frac{2}{\nu}}}{4(1-\mu)\mu^{\frac{2}{\nu}-1}}\Big\{1+ \ltn y,y^\prime \rtn_{x,p,[a,b]}^\frac{2}{\nu}+\Big(\ltn x \rtn_{\nu,[a,b]} +  \ltn \X \rtn_{2\nu,\Delta^2([a,b])} +  \ltn [x]\rtn_{2\nu,\Delta^2([a,b])} \Big)^\frac{2}{\nu} \notag\\
	&&\qquad \qquad \qquad\qquad+\frac{1}{2}\ltn y,y^\prime \rtn_{x,p,[a,b]}^\frac{4}{\nu} + \frac{1}{2} \Big(\ltn x \rtn_{\nu,[a,b]} +  \ltn \X \rtn_{2\nu,\Delta^2([a,b])} +  \ltn [x]\rtn_{2\nu,\Delta^2([a,b])} \Big)^\frac{4}{\nu}\Big\},
	\end{eqnarray*}
	which proves \eqref{alphanormtheta}.
\end{proof}	

\section*{Acknowledgments}
This work was supported by the Max Planck Institute for Mathematics in the Science (MIS-Leipzig).

\end{document}